\newtheorem{thm}{Theorem}[section]
\newtheorem{lemma}[thm]{Lemma}
\newtheorem{prop}[thm]{Proposition}
\newtheorem{cor}[thm]{Corollary}
\theoremstyle{definition}
\newtheorem{df}[thm]{Definition}
\newtheorem{rmk}[thm]{Remark}
\newtheorem{ex}[thm]{Example}
\newtheorem{fact}[thm]{Fact}
\newcommand{\Z}{\mathbb{Z}}
\newcommand{\curly}[1]{\mathcal{#1}}
\newcommand{\E}{\curly{E}}
\newcommand{\G}{\mathcal{G}}
\newcommand{\V}{\mathcal{V}}
\def \r { {\mathbb R} }
\def \<{\langle}
\def \>{\rangle}
\def \n {\mathbb N}
\def \z {{\mathbb Z}}
\def \*Z {{{^*}\Z}}
\def \((  {(\!(}
\def \)) {)\!)}
\def \st {\operatorname {st}}
\def \ns{\operatorname{ns}}
\def \int{\operatorname{int}}
\def \ns{\operatorname{ns}}
\def \f{\operatorname{fin}}
\def \inf{\operatorname{inf}}
\def \e{\operatorname{end}}
\def \ee{\operatorname{Ends}}
\def \ipc{\operatorname{IPC}}
\def \Aut{\operatorname{Aut}}
\numberwithin{equation}{section}
\begin{document}

\title{Ends of groups: a nonstandard perspective}

\author{Isaac Goldbring}

\address {University of California, Los Angeles, Department of Mathematics, 520 Portola Plaza, Box 951555, Los Angeles, CA 90095-1555, USA}
\email{isaac@math.ucla.edu}
\urladdr{www.math.ucla.edu/~isaac}

\begin{abstract}
We give a nonstandard treatment of the notion of ends of proper geodesic metric spaces.  We then apply this nonstandard treatment to Cayley graphs of finitely generated groups and give nonstandard proofs of many of the fundamental results concerning ends of groups.  We end with an analogous nonstandard treatment of the ends of relatively Cayley graphs, that is Cayley graphs of cosets of finitely generated groups.
\end{abstract}

\maketitle

\section{Introduction}

Nonstandard analysis made its first serious impact on geometric group theory via the work of van den Dries and Wilkie \cite{DW} on Gromov's theorem on polynomial growth.  Indeed, the complicated limit used to form the asymptotic cone of a metric space was replaced by an ultrapower, simplifying the proof considerably.  More recently, the author used nonstandard methods to settle the local version of Hilbert's fifth problem; see \cite{Gold}.

In this paper, we treat the notion of \emph{ends of a finitely generated group} from a nonstandard perspective.  Roughly speaking, the ends of a topological space are its ``path components at infinity.''  An analsysis of the ends of the Cayley graph of a finitely generated group yields a significant amount of algebraic information about the group.  Using the language of nonstandard analysis, the aformentioned heuristic description of the ends of a proper geodesic metric space can be made precise, leading to much simpler and intuitive proofs of many of the fundamental results of the subject.

The aim of this article is two-fold:  First, we present nonstandard proofs of several basic facts about the ends of spaces and groups.  The idea is to show how the intuitive proofs of these facts can be made into rigorous nonstandard arguments, whence avoiding the lengthy and sometimes messy standard proofs.  Ideally, it is my hope that the language and techniques of nonstandard methods can provide simpler proofs of deeper theorems, e.g. Stallings Theorem on groups with infinitely many ends (see Section \ref{Cayley}), as well as lead to proofs of new results.

Secondly, we aim to show that certain notions arising naturally in the nonstandard language may lead to classical notions that have yet to be studied.  For example, we discuss a nonstandard property that a finitely generated group can possess, namely that the group have \emph{multiplicative ends}; see Section \ref{two}.  This notion suggests itself immediately once the nonstandard framework is developed, begging the question of the standard counterpart of the notion.  We present several standard characterizations of this notion, one of them being that the group is a semidirect product of a finite group by an infinite cyclic group.  It is my belief that there are other such transparent nonstandard notions whose standard counterparts may have yet to be analyzed.

I would like to thank Alberto Delgado for suggesting that I consider ends of groups in a nonstandard way.   I would also like to thank Lou van den Dries, Ilya Kapovich, and Patrick Reynolds for useful discussions concerning this work.

This research was conducted while I was a graduate student at the University of Illinois at Urbana-Champaign, where I was supported by a Dissertation Completion Fellowship. 

\

\noindent \textbf{Notations and Conventions}

\

We assume that the reader is familiar with elementary nonstandard analysis; otherwise, consult \cite{D} or \cite{He} for a friendly introduction.  Alternatively, \cite{DW} contains a short introduction to the subject for group theorists.   Here we just fix notations and terminology.
To each relevant ``basic'' set $S$ corresponds functorially
a set $S^*\supseteq S$,
the {\em nonstandard extension\/} of $S$. In particular, $\n$ and $\r$
extend
to $\n^*$ and $\r^*$,
respectively. Also, any (relevant) relation $R$ and function
$F$ on these basic sets extends functorially to a relation $R^*$ and
function
$F^*$ on the corresponding nonstandard extensions of these basic sets.
For example, the linear ordering $<$ on $\n$ extends to a
linear ordering $<^*$ on $\n^*$.  Likewise, if $G$ is a group, then the group multiplication $m:G\times G\to G$ extends to a group operation $m^*:G^*\times G^*\to G^*$.  For the sake of
readability
we only use a star in denoting the nonstandard extension of a basic set,
but drop the star when indicating the nonstandard extension
of a relation or function on these basic sets. For example,
when $x,y\in \r^*$ we write $x+y$ and $x<y$ rather than $x+^*y$ and $x<^*
y$.  The nonstandard universe is an \emph{elementary extension} of the standard universe, and when using this fact, we often say that we are using the \emph{transfer principle} or that we are \emph{arguing by transfer}.  

We remind the reader of the important notion of an \emph{internal set}.  If $S$ and its powerset $\mathcal{P}(S)$ are basic sets, then we assume that the membership relation $\in$ is a basic relation between $S$ and $\mathcal{P}(S)$.  Under this assumption, we can canonically identify $\mathcal{P}(S)^*$ with a subset of $\mathcal{P}(S^*)$.  After this identification, we call $A\subseteq S^*$ internal if it is an element of $\mathcal{P}(S)^*$.  For internal subsets of $\n^*$, there are two important principles that we use frequently throughout the paper, namely \emph{overflow} and \emph{underflow}.  Overflow states that if $A\subseteq \n^*$ is internal and contains arbitrarily large elements of $\n$, then $A$ contains an element of $\n^*\setminus \n$.  Dually, underflow states that if $A\subseteq \n^*$ is internal and contains arbitrarily small elements of $\n^*\setminus \n$, then $A$ contains an element of $\n$.  

We also assume that our nonstandard universe is $\kappa$-saturated for some sufficiently large cardinal $\kappa$ (although $\aleph_1$-saturation is probably all that is necessary).  We remind the reader that this assumption means that whenever $(A_i \ | \ i<\kappa)$ is a family of internal sets with the finite intersection property, then $\bigcap_{i<\kappa} A_i\not=\emptyset$.

Throughout this paper, $(X,x_0)$ will denote an \emph{unbounded} pointed metric space.  For any point $x\in X$ and any $R\in \r$, $B(x,R)$ will denote the \emph{closed} ball centered at $x$ with radius $R$.  For $x\in X$, we let $\mu_X(x)$ (or simply $\mu(x)$ if there is no risk of confusion) denote the \emph{monad of $x$ in $X$}, that is the set of points $y\in X^*$ for which $d(x,y)$ is infinitesimal.
We set $X_{\ns}:=\bigcup_{x\in X}\mu(x)$, the set of \emph{nearstandard elements}, that is the set of elements of $X^*$ which are infinitely close to an element of $X$.  We also set $X_{\f}:=\{x\in X^* \ | \ d(x_0,x)\in \r_{\ns}\}$, the set of elements of $X^*$  which are within a finite distance to some (equiv. any) element of $X$.  We let $X_{\inf}:=X^*\setminus X_{\f}$.

When we specialize to the case of groups, we assume that all groups are finitely generated.  To avoid trivialities, we also assume that all groups are infinite.

We always suppose $m,n,$ and $N$, sometimes subscripted, range over $\n:=\{0,1,2,\ldots\}$.

\

\section{Proper spaces and maps}

\

\noindent Recall that a metric space is said to be \emph{proper} if every closed ball is compact.  The following result is well-known, but we include a proof for the sake of completeness.

\begin{lemma}
$X$ is proper if and only if $X_{\ns}=X_{\f}$.
\end{lemma}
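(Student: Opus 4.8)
The plan is to reduce the entire statement to Robinson's nonstandard characterization of compactness, which I would invoke as a known fact: a subset $K$ of a metric space is compact if and only if every point of $K^*$ lies in the monad $\mu(z)$ of some $z\in K$. Before using it, I would first record the one inclusion that holds with no hypothesis on $X$, namely $X_{\ns}\subseteq X_{\f}$. Indeed, if $y\in\mu(x)$ with $x\in X$, then $d(x_0,y)\le d(x_0,x)+d(x,y)$, where $d(x_0,x)$ is a standard real and $d(x,y)$ is infinitesimal, so $d(x_0,y)\in\r_{\ns}$ and hence $y\in X_{\f}$. Thus the whole content of the lemma is the equivalence of properness with the reverse inclusion $X_{\f}\subseteq X_{\ns}$.

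For the forward direction, I would assume $X$ is proper and take $y\in X_{\f}$. Since $d(x_0,y)$ is finite, it is bounded by some standard $N\in\n$, so $y\in B(x_0,N)^*$. As $B(x_0,N)$ is compact, Robinson's criterion furnishes a point $z\in B(x_0,N)\subseteq X$ with $y\in\mu(z)$, giving $y\in X_{\ns}$.

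For the converse, I would assume $X_{\ns}=X_{\f}$ and let $B(x,R)$ be an arbitrary closed ball with $x\in X$ and $R\in\r$, aiming to verify Robinson's criterion for it. Given $y\in B(x,R)^*$ we have $d(x,y)\le R$, so $d(x_0,y)\le d(x_0,x)+R$ is finite and $y\in X_{\f}=X_{\ns}$; hence $y\in\mu(z)$ for some $z\in X$. The step needing care is checking that this $z$ actually lies in the ball: since $d(y,z)$ is infinitesimal and $d(x,y)\le R$, the triangle inequality gives $d(x,z)\le d(x,y)+d(y,z)$, and taking standard parts (noting $d(x,z)$ is a standard real, $x,z$ being standard) yields $d(x,z)\le R$, so $z\in B(x,R)$. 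Thus every point of $B(x,R)^*$ is in the monad of a point of $B(x,R)$, and Robinson's criterion shows $B(x,R)$ is compact.

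The argument is essentially routine once the compactness criterion is in hand; the only genuine subtlety is this last verification that the nearstandard representative $z$ falls back inside the closed ball, which is precisely where the closedness of the ball (as opposed to just boundedness) is used.
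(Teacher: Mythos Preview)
Your proof is correct and follows essentially the same route as the paper: both directions hinge on Robinson's compactness criterion, and in the converse direction the paper performs exactly the same triangle-inequality-plus-standard-part check to land the nearstandard representative $z$ back inside $B(x,R)$. The only cosmetic difference is that the paper leaves the criterion implicit rather than naming it.
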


\begin{proof}
$(\Rightarrow)$ We always have the inclusion $X_{\ns}\subseteq X_{\f}$.  Now suppose $x\in X_{\f}$, say $d(x,x_0)\leq R$ with $R\in \r$.  Since $B(x_0,R)$ is compact, we have $B(x_0,R)^* \subseteq X_{\ns}$, whence we see that $x\in X_{\ns}$. 

$(\Leftarrow)$ Given any $x\in X$ and any $R\in \r$, we have $B(x,R)^*\subseteq X_{\f}=X_{\ns}$.  Thus, given $y\in B(x,R)^*$, there is $z\in X$ such that $y\sim z$.  Since $d(x,z)\leq R+d(y,z)$ and $d(x,z)\in \r$, it follows that $d(x,z)\leq R$ and $z\in B(x,R)_{\ns}$.  Hence, $B(x,R)$ is compact.  
\end{proof}

\

\noindent Recall that a map $f:X\to Y$ between topological spaces is said to be \emph{proper} if $f^{-1}(C)\subseteq X$ is compact for every compact $C\subseteq Y$.

\begin{lemma}
Suppose $X$ and $Y$ are proper metric spaces and $f:X\to Y$ is continuous.  Then $f$ is proper if and only if $f(X_{\inf})\subseteq Y_{\inf}$.
\end{lemma}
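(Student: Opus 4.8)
The plan is to reduce everything to the nonstandard characterizations of compactness and continuity. First I would record the two facts I intend to lean on: (i) a subset $K$ of a metric space is compact if and only if every element of $K^*$ lies in $\mu(x)$ for some $x\in K$; and (ii) continuity of $f$ at a standard point $x'$ is equivalent to $f(\mu(x'))\subseteq \mu(f(x'))$. Since $X$ and $Y$ are proper, the previous lemma gives $X_{\ns}=X_{\f}$ and $Y_{\ns}=Y_{\f}$, so $X^*$ is the disjoint union of $X_{\ns}$ and $X_{\inf}$, and similarly for $Y$. Hence the condition $f(X_{\inf})\subseteq Y_{\inf}$ is just the contrapositive assertion that $f(x)\in Y_{\ns}$ forces $x\in X_{\ns}$, and it is this reformulation I would work with throughout.

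For the direction $(\Leftarrow)$, I would assume that $f(x)\in Y_{\ns}$ implies $x\in X_{\ns}$ and let $C\subseteq Y$ be compact, aiming to show $f^{-1}(C)$ is compact. Since $C$ is closed and $f$ is continuous, $f^{-1}(C)$ is closed, and by transfer $(f^{-1}(C))^*=\{x\in X^*:f(x)\in C^*\}$. Given such an $x$, compactness of $C$ yields $C^*\subseteq Y_{\ns}$, so $f(x)\in Y_{\ns}$ and hence, by hypothesis, $x\in \mu(x')$ for some $x'\in X$. I would then use continuity to get $f(x)\in \mu(f(x'))$ and compare this with the point of $C$ to which $f(x)$ is nearstandard; because metric spaces are Hausdorff, the monads of distinct standard points are disjoint, forcing $f(x')\in C$, i.e. $x'\in f^{-1}(C)$. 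By (i) this proves $f^{-1}(C)$ compact.

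For $(\Rightarrow)$, I would assume $f$ is proper and take $x\in X^*$ with $f(x)\in Y_{\ns}$, say $f(x)\in \mu(y)$. Here the key is to exploit properness of $Y$: the closed ball $C:=B(y,1)$ is compact, and since $d(f(x),y)$ is infinitesimal we have $f(x)\in C^*$, i.e. $x\in (f^{-1}(C))^*$. Properness of $f$ makes $f^{-1}(C)$ compact, so by (i) $x$ is nearstandard; this is exactly $x\in X_{\ns}$, which completes the contrapositive.

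The only genuinely delicate step is pinning down that the standard part $x'$ of $x$ actually lies in $f^{-1}(C)$ in the $(\Leftarrow)$ direction: continuity alone gives $f(x)\in \mu(f(x'))$, but to conclude $f(x')\in C$ I must combine this with the fact that $f(x)$ is already nearstandard to a point of $C$ and invoke the uniqueness of standard parts in a Hausdorff space. Everything else is a routine application of transfer together with the compactness and continuity dictionaries.
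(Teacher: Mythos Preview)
Your proof is correct, but it takes a different route from the paper in both directions. The paper argues $(\Rightarrow)$ by extracting a standard bound: since each $B(y_0,n)$ has compact preimage, one gets $N_n$ with $d(x_0,x)\geq N_n\Rightarrow d(y_0,f(x))>n$, and then transfers. For $(\Leftarrow)$ the paper argues by contrapositive: if some compact $C$ has non-compact (hence unbounded, since closed) preimage, overflow produces $x\in X_{\inf}$ with $f(x)\in C^*\subseteq Y_{\f}$. By contrast, you work entirely through the Robinson characterization of compactness in both directions, pulling back a compact ball in $(\Rightarrow)$ and verifying the nearstandard-point criterion for $f^{-1}(C)$ directly in $(\Leftarrow)$. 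Your approach is more uniformly ``nonstandard'' and avoids the overflow step; the paper's is slightly shorter because it never needs to check that the standard part $x'$ actually lands in $f^{-1}(C)$. On that last point, note that your detour through continuity and Hausdorffness is unnecessary: you already observed $f^{-1}(C)$ is closed, and for closed sets the standard part of any nearstandard element of the extension automatically lies in the set.
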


\begin{proof}
$(\Rightarrow)$ Suppose $f$ is proper.  Fix a basepoint $y_0$ for $Y$.  Since $B(y_0,n)$ is compact for every $n$, there is $N_n$ such that $d(y_0,f(x))>n$ for every $x\in X$ with $d(x_0,x)\geq N_n$.  Hence, for $x\in X_{\inf}$, we have $d(y_0,f(x))>n$ for every $n$, i.e. $f(x)\in Y_{\inf}$.

$(\Leftarrow)$ Suppose that $f$ is not proper.  Let $C\subseteq Y$ be compact and such that $f^{-1}(C)$ is not compact.  Since $f^{-1}(C)$ is closed, we must have that $f^{-1}(C)$ is unbounded.  Hence, by overflow, there is $x\in X_{\inf}$ satisfying $f(x)\in C^*\subseteq Y_{\ns}= Y_{\f}$.  
\end{proof}

\noindent The following special case of the previous lemma is all we will really need.  Recall that a \emph{ray in $X$} is just a continuous function $r:[0,\infty)\to X$.

\begin{cor}\label{L:properray}
Suppose $X$ is proper and $r:[0,\infty)\to X$ is a ray.  Then $r$ is proper if and only if $r(\sigma)\in X_{\inf}$ for every $\sigma \in \r^+_{\inf}$.
\end{cor}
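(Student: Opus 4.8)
The plan is to derive this corollary directly from the preceding lemma by recognizing a ray $r:[0,\infty)\to X$ as a continuous map between proper metric spaces, with domain $[0,\infty)$ equipped with its usual metric. First I would check that $[0,\infty)$ is indeed a proper metric space: its closed balls are closed bounded intervals, hence compact by Heine--Borel. Since $X$ is proper by hypothesis, the previous lemma applies verbatim to $f=r$, giving that $r$ is proper if and only if $r([0,\infty)_{\inf})\subseteq X_{\inf}$.

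The remaining step is to translate the condition $r([0,\infty)_{\inf})\subseteq X_{\inf}$ into the stated condition $r(\sigma)\in X_{\inf}$ for every $\sigma\in\r^+_{\inf}$. These are almost the same statement, so I would just identify the infinite part of $[0,\infty)^*$ explicitly. Taking the basepoint of $[0,\infty)$ to be $0$, an element $\sigma\in[0,\infty)^*$ lies in $[0,\infty)_{\f}$ exactly when $d(0,\sigma)=\sigma$ is finite, so $[0,\infty)_{\inf}$ consists precisely of the positive infinite hyperreals, i.e.\ the elements of $\r^+_{\inf}$. Thus the condition from the lemma is literally the condition in the corollary.

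I do not anticipate a genuine obstacle here, since this is a specialization rather than a new argument; the only point requiring a word of care is making sure the domain $[0,\infty)$ satisfies the hypotheses of the lemma, namely that it is an unbounded proper pointed metric space. Unboundedness is clear, and properness follows from Heine--Borel as above. With these verifications in place the corollary is immediate.
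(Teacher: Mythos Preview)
Your proposal is correct and matches the paper's approach exactly: the paper presents this corollary simply as a special case of the preceding lemma about proper maps between proper metric spaces, leaving implicit precisely the verifications (properness of $[0,\infty)$ via Heine--Borel and the identification $[0,\infty)_{\inf}=\r^+_{\inf}$) that you spell out.
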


\

\section{The Space of Ends of a Proper Geodesic Metric Space}

In this section, we assume that our unbounded pointed metric space $(X,x_0)$ is also proper.  We will use the following definition of the ends of a proper metric space.

\begin{df}
Two proper rays $r_1,r_2:[0,\infty)\to X$ are said to \emph{converge to the same end} if for every $R\in \r^{>0}$, there exists $N$ such that $r_1[N,\infty)$ and $r_2[N,\infty)$ are contained in the same path component of $X\setminus B(x_0,R)$.  This defines an equivalence relation on the set of proper rays; the equivalence class of $r$ will be denoted by $\e(r)$.  The set of equivalence classes will be denoted by $\ee(X)$.  
\end{df}

Before we give a nonstandard characterization of two proper rays having the same end, we must introduce and analyze a few nonstandard notions.

\begin{df} For $x,y\in X^*$, we write $x\propto y$ if there is $\alpha \in C([0,1],X)^*$ such that $\alpha(0)=x$, $\alpha(1)=y$, and $\alpha(t)\in X_{\inf}$ for all $t\in [0,1]^*$.  
\end{df}

Heuristically, one should think of the relation $x\propto y$ as saying that $x$ and $y$ are in the same ``path component at infinity'', for there is an \emph{internal} path connecting $x$ and $y$ which is contained in the infinite portion of the space.

\begin{df}
For $x,y\in X$ and $R\in \r^{>0}$, we write $x\propto_R y$ if there is $\alpha \in C([0,1],X)$ such that $\alpha(0)=x$, $\alpha(1)=y$, and $\alpha(t)\in X\setminus B(x_0,R)$ for all $t\in [0,1]$.  Note that $\propto_R$ is an equivalence relation on $X$ for each $R\in \r^{>0}$.  We will also use $\propto_\sigma$ for $\sigma\in\r^*\setminus \r$, which is the internal relation on $X^*$ given by $x\propto_\sigma y$ if and only if there is $\alpha \in C([0,1],X)^*$ such that $\alpha(0)=x$, $\alpha(1)=y$, and $\alpha(t)\in X^*\setminus B(x_0,\sigma)$ for all $t\in [0,1]^*$.
\end{df}

\begin{rmk}
Suppsoe $x,y\in X^*$.  If $\sigma\in \r^*\setminus \r$ and $x\propto_\sigma y$, then $x\propto y$.  Conversely, if $x\propto y$, then, by underflow, there exists $\nu\in \n^*\setminus \n$ such that $x\propto_\nu y$; of course such a $\nu$ depends on $x$ and $y$.
\end{rmk}

\begin{rmk}\label{L:asymprmks}
The choice of $[0,1]^*$ in the above definitions is purely arbitrary.  In fact, let $\operatorname{Path}(X)$ denote the set of paths in $X$, that is $\alpha\in \operatorname{Path}(X)$ if and only if there are $r,s\in \r$ such that $\alpha:[r,s]\to X$ is continuous.  Note that any $\alpha \in \operatorname{Path}(X)$ has a reparameterization in $C([0,1],X)$.  Hence, by transfer, if there is $\alpha\in \operatorname{Path}(X)^*$, say $\alpha:[\sigma,\tau]\to X^*$, such that $\alpha(\sigma)=x$, $\alpha(\tau)=y$, and $\alpha(t)\in X_{\inf}$ for all $t\in[\sigma,\tau]$, then $x\propto y$; likewise for the notion of $\propto_\nu$.  (Here $\sigma$ and $\tau$ are in $\r^*$ and $[\sigma,\tau]$ denotes the interval determined by $\sigma$ and $\tau$ in $\r^*$.)  It follows that $\propto$ is an equivalence relation on $X_{\inf}$.
\end{rmk}

\noindent In proper \emph{geodesic} spaces, we can find a ``discrete'' formulation of $\propto$.  Recall that $X$ is a geodesic metric space if for any $x,y\in X$, there is an isometric embedding $\alpha:[0,r]\to X$ such that $\alpha(0)=x$ and $\alpha(r)=y$, where $r:=d(x,y)$.

\begin{lemma}\label{L:asympgeo}
Suppose $X$ is a proper geodesic space.  Fix $x,y\in X^*$.  Then the following are equivalent:
\begin{enumerate}
\item $x\propto y$;
\item for every $\epsilon \in (\r^{>0})^*$, there is a hyperfinite sequence $a_0,\ldots,a_\nu$ in $X_{\inf}$ such that $a_0=x$, $a_\nu=y$, and $d(a_i,a_{i+1})<\epsilon$ for each $i<\nu$;
\item there is a hyperfinite sequence $a_0,\ldots,a_\nu$ in $X_{\inf}$ such that $a_0=x$, $a_\nu=y$, and $d(a_i,a_{i+1})\in \r_{\f}$ for each $i<\nu$.
\end{enumerate}
\end{lemma}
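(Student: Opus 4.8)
The plan is to prove the three equivalences in a cycle, say $(1)\Rightarrow(3)\Rightarrow(2)\Rightarrow(1)$, or perhaps more naturally $(2)\Rightarrow(3)$ is trivial and $(1)\Leftrightarrow(2)$ and something. Let me think about what's the cleanest.

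We have:
(1) $x \propto y$: there's an internal path $\alpha: [0,1]^* \to X^*$ from $x$ to $y$ staying in $X_{\inf}$.
(2) for every infinitesimal/positive hyperreal $\epsilon$, there's a hyperfinite $\epsilon$-chain in $X_{\inf}$ from $x$ to $y$.
(3) there's a hyperfinite chain in $X_{\inf}$ from $x$ to $y$ with finite (bounded, nearstandard) steps.

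Let me think about implications.

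$(2) \Rightarrow (3)$: Take $\epsilon$ to be any specific value, say $\epsilon = 1$ (standard). Then a chain with steps $< 1$ certainly has steps in $\r_{\f}$. So (2) implies (3) trivially — just apply (2) with $\epsilon = 1$.

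$(3) \Rightarrow (1)$: Given a chain $a_0, \ldots, a_\nu$ with steps in $\r_{\f}$, all in $X_{\inf}$. Since $X$ is geodesic, we can connect consecutive points $a_i, a_{i+1}$ by a geodesic segment. By transfer, there's an internal geodesic (isometric embedding) between each pair. We need to concatenate these hyperfinitely many geodesics into a single internal path and check it stays in $X_{\inf}$.

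The concern: does the geodesic from $a_i$ to $a_{i+1}$ stay in $X_{\inf}$? The points $a_i, a_{i+1}$ are in $X_{\inf}$, and $d(a_i, a_{i+1})$ is finite (nearstandard). Any point $p$ on the geodesic satisfies $d(x_0, p) \geq d(x_0, a_i) - d(a_i, p) \geq d(x_0, a_i) - d(a_i, a_{i+1})$. Since $d(x_0, a_i)$ is infinite and $d(a_i, a_{i+1})$ is finite, $d(x_0, p)$ is infinite, so $p \in X_{\inf}$. Good. So each geodesic segment stays in $X_{\inf}$.

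Now concatenation: we need to build an internal path. This requires internal machinery. Think of it as: by transfer from the standard fact that finitely many paths can be concatenated, we get that hyperfinitely many internal paths can be concatenated into a single internal path. This uses the internal induction / transfer. We should use the formulation in Remark \ref{L:asymprmks} (the $\operatorname{Path}(X)$ reparametrization) to avoid worrying about the parametrization interval. Actually the cleanest: define an internal path on $[0, \nu]^*$ where on $[i, i+1]$ it's the geodesic from $a_i$ to $a_{i+1}$ (suitably parametrized). By transfer this is a legitimate element of $\operatorname{Path}(X)^*$, and by Remark \ref{L:asymprmks}, since it stays in $X_{\inf}$, we get $x \propto y$.

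$(1) \Rightarrow (2)$: This is the main obstacle. Given an internal path $\alpha: [0,1]^* \to X^*$ in $X_{\inf}$, and given $\epsilon \in (\r^{>0})^*$ (possibly infinitesimal), we need a hyperfinite $\epsilon$-chain. The idea: the internal path is (internally) continuous, hence (internally) uniformly continuous on the compact interval $[0,1]^*$. By transfer of the standard fact that a continuous function on $[0,1]$ is uniformly continuous, $\alpha$ is internally uniformly continuous: for our given $\epsilon$ there is an internal $\delta$ such that $|s - t| < \delta \Rightarrow d(\alpha(s), \alpha(t)) < \epsilon$. Then partition $[0,1]^*$ into a hyperfinite number $\nu$ of subintervals of width $< \delta$ (take $\nu$ a hyperinteger $> 1/\delta$), and set $a_i = \alpha(i/\nu)$. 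These are all in $X_{\inf}$ (points on the path), and consecutive distances are $< \epsilon$.

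So the whole argument really runs through transfer of standard continuity facts. The one genuine subtlety — and what I'd flag as the main obstacle — is handling the internal uniform continuity correctly: $\alpha$ is an element of $C([0,1], X)^*$, so "$\alpha$ is uniformly continuous" must be read as the transfer of the statement "every element of $C([0,1],X)$ is uniformly continuous," giving an internal $\delta$ responding to the internal $\epsilon$. One must be careful that $\epsilon$ is allowed to be infinitesimal, but since the standard statement is "$\forall \epsilon > 0 \, \exists \delta > 0 \ldots$" quantified over all of $C([0,1],X)$, transfer gives the same with $\epsilon$ ranging over $(\r^{>0})^*$, so infinitesimal $\epsilon$ is fine and yields infinitesimal $\delta$. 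The hyperfinite partition and the chain then come out by transfer of the finite partition construction. I would carry out the cycle as $(2)\Rightarrow(3)$ (trivial specialization to $\epsilon=1$), $(3)\Rightarrow(1)$ (concatenate internal geodesics, using the distance estimate to stay in $X_{\inf}$), and $(1)\Rightarrow(2)$ (internal uniform continuity plus hyperfinite partition), with the last implication being where the real care is needed.
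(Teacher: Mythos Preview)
Your proposal is correct and follows essentially the same approach as the paper: the cycle $(1)\Rightarrow(2)\Rightarrow(3)\Rightarrow(1)$ via internal uniform continuity of $\alpha$ for $(1)\Rightarrow(2)$, trivial specialization for $(2)\Rightarrow(3)$, and concatenation of internal geodesic segments (with the triangle-inequality check that they stay in $X_{\inf}$) together with Remark~\ref{L:asymprmks} for $(3)\Rightarrow(1)$. Your identification of the internal uniform continuity step as the main point of care matches the paper's treatment exactly.
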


\begin{proof}
$(1)\Rightarrow (2)$:  Fix $\epsilon\in (\r^{>0})^*$.  Fix $\alpha \in C([0,1],X)^*$ witnessing that $x\propto y$.  Since $\alpha$ is internally uniformly continuous, there is $\nu\in \n^*\setminus \n$ such that for all $t,t'\in [0,1]^*$, if $|t-t'|\leq \frac{1}{\nu}$, then $d(\alpha(t),\alpha(t'))<\epsilon$.  The desired sequence is then given by $a_i:=\alpha(\frac{i}{\nu})$.

$(2)\Rightarrow (3)$ is trivial.

$(3)\Rightarrow (1)$:  Let the hyperfinite sequence $a_0,\ldots,a_\nu$ be as guaranteed to exist by $(3)$.  For each $i<\nu$, let $[a_i,a_{i+1}]$ denote an internal geodesic segment connecting $a_i$ and $a_{i+1}$.  Since $d(a_i,a_{i+1})\in \r_{\f}$, these segments are contained entirely in $X_{\inf}$.  Concatenating these segments and applying Remark \ref{L:asymprmks}, we see that $x\propto y$. 
\end{proof}

\noindent We are now prepared to give a nonstandard characterization of two proper rays having the same end.

\begin{lemma}\label{L:nonstend}
Suppose $r_1,r_2:[0,\infty)\to X$ are proper rays.  Then the following are equivalent:
\begin{enumerate}
\item $\e(r_1)=\e(r_2)$;
\item for \emph{all} $\sigma, \tau \in \r^{>0}_{\inf}$, $r_1(\sigma)\propto r_2(\tau)$;
\item for \emph{some} $\sigma, \tau \in \r^{>0}_{\inf}$, $r_1(\sigma)\propto r_2(\tau)$.
\end{enumerate}
\end{lemma}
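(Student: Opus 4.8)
The plan is to prove the cycle $(1)\Rightarrow(2)\Rightarrow(3)\Rightarrow(1)$, with $(2)\Rightarrow(3)$ immediate since $\r^{>0}_{\inf}\neq\emptyset$. Both substantive implications rest on one elementary observation about a single proper ray $r$: for each standard $R\in\r^{>0}$, properness forces $r^{-1}(B(x_0,R))$ to be compact, hence bounded, so there is an $N$ with $r[N,\infty)\subseteq X\setminus B(x_0,R)$; as $r[N,\infty)$ is path-connected inside $X\setminus B(x_0,R)$, we get $r(s)\propto_R r(s')$ for all $s,s'\geq N$. Transferring this fixed-$R$, fixed-$N$ statement is what will let me feed it infinite arguments.

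For $(1)\Rightarrow(2)$, fix $\sigma,\tau\in\r^{>0}_{\inf}$. The hypothesis $\e(r_1)=\e(r_2)$ says exactly that for every $R\in\r^{>0}$ there is an $N$ with $r_1(s)\propto_R r_2(s')$ for all standard $s,s'\geq N$ (two points in a common path component are joined by a path in it); choosing such an $N=g(R)$ gives a function $g:\r^{>0}\to\n$. By transfer, for every $R\in(\r^{>0})^*$ and all $s,s'\geq g(R)$ one has $r_1(s)\propto_R r_2(s')$ in the internal sense. It then remains only to locate an \emph{infinite} radius whose threshold is still small enough: the internal set $\{\,n\in\n^*: g(n)\leq\min(\sigma,\tau)\,\}$ contains every standard natural number (since $g$ sends standard arguments to finite values, while $\min(\sigma,\tau)$ is infinite), so by overflow it contains an infinite $\nu$. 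For this $\nu$ we have $g(\nu)\leq\sigma,\tau$, whence $r_1(\sigma)\propto_\nu r_2(\tau)$; and since $\nu$ is infinite, $\propto_\nu$ implies $\propto$, giving $(2)$.

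For $(3)\Rightarrow(1)$, suppose $r_1(\sigma_0)\propto r_2(\tau_0)$ for some $\sigma_0,\tau_0\in\r^{>0}_{\inf}$. A path witnessing $\propto$ lies entirely in $X_{\inf}$, hence outside $B(x_0,R)^*$ for every standard $R$, so $r_1(\sigma_0)\propto_R r_2(\tau_0)$ for each standard $R\in\r^{>0}$. Fix such an $R$ and use the ray observation to pick $N_1,N_2$ with $r_i[N_i,\infty)\subseteq X\setminus B(x_0,R)$ path-connected there; set $N=\max(N_1,N_2)$ and take standard $a\geq N_1$, $b\geq N_2$. Transferring the ray observation gives $r_1(a)\propto_R r_1(\sigma_0)$ and $r_2(\tau_0)\propto_R r_2(b)$, and since $\propto_R$ is (by transfer) an internal equivalence relation on $X^*$, transitivity yields $r_1(a)\propto_R r_2(b)$. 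As $a,b,R$ are standard and $r_1(a),r_2(b)\in X$, this is a genuine standard path outside $B(x_0,R)$, so $r_1(a)$ and $r_2(b)$ lie in a single path component of $X\setminus B(x_0,R)$. Combined with path-connectivity of the two tails, all of $r_1[N,\infty)\cup r_2[N,\infty)$ lies in one path component, which is precisely $(1)$.

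The main obstacle is the overflow step in $(1)\Rightarrow(2)$: I must simultaneously drive the radius $\nu$ to infinity—so that $\propto_\nu$ upgrades to $\propto$, i.e. the connecting path lives in $X_{\inf}$—while keeping the threshold $g(\nu)$ below the prescribed infinite parameters $\sigma,\tau$. The saving fact is that $g$ is a standard function, so its values at standard radii are finite, and overflow then produces an infinite radius respecting the bound. The only other points requiring care are routine: the repeated passage between the standard relations $\propto_R$ and their internal extensions via transfer, and the observation (by transfer of the corresponding remark in the definition) that $\propto_R$ remains an equivalence relation on $X^*$.
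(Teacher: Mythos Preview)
Your proof is correct. The $(1)\Rightarrow(2)$ argument is essentially the paper's overflow argument, though you package it via a choice function $g$ where the paper overflows directly on the internal set $A=\{\nu\in\n^*: r_1(\sigma)\propto_\nu r_2(\tau)\}$ after observing $\n\subseteq A$ (slightly cleaner, as it avoids introducing $g$). The more substantive difference is in closing the cycle: the paper proves $(2)\Rightarrow(1)$ by contrapositive---if the ends differ then for some fixed $R$ the internal sets $B_N=\{(s,t): s,t\geq N,\ r_1(s)\not\propto_R r_2(t)\}$ are all nonempty, and \emph{saturation} yields an infinite pair $(\sigma,\tau)$ with $r_1(\sigma)\not\propto_R r_2(\tau)$---and then handles $(3)\Rightarrow(2)$ separately via the observation that any two infinite values of the same proper ray are $\propto$-equivalent. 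Your direct $(3)\Rightarrow(1)$, which pulls a standard $\propto_R$-relation back from the nonstandard universe by transfer (since all parameters $a,b,R$ are standard), avoids saturation entirely and is arguably more elementary. On the other hand, the paper's decomposition isolates the single-ray fact $r_i(\sigma)\propto r_i(\sigma')$ for all infinite $\sigma,\sigma'$, which is reused later (for instance in describing the monads of $\ee(X)$ in Lemma~\ref{L:endmonad}).
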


\begin{proof}
$(1) \Rightarrow(2)$:  Suppose $\e(r_1)=\e(r_2)$ and let $\sigma, \tau \in \r^{>0}_{\inf}$.  Given $n$, there is $N$ such that for all $s,t\in \r^{>0}$ with $s,t\geq N$, we have $r_1(s)\propto_n r_2(t)$.  Consider the internal set $$A:=\{\nu\in \n^* \ | \ r_1(\sigma)\propto_\nu r_2(\tau)\}.$$  By the transfer principle, $\n\subseteq A$.  Thus, by overflow, we have $\nu\in \n^*\setminus \n$ with $r_1(\sigma)\propto_\nu r_2(\tau)$, yielding that $r_1(\sigma)\propto r_2(\tau)$.

$(2) \Rightarrow(1)$:  Suppose $\e(r_1)\not= \e(r_2)$.  Then there is some $R\in \r^{>0}$ such that $r_1[N,\infty)$ and $r_2[N,\infty)$ do not lie in the same path component of $X\setminus B(x_0,R)$ for every $N$; that is, for every $N$, there are $s,t\in \r^{>0}$ with $s,t\geq N$ such that $r_1(s)\not \propto_R r_2(t)$. For each $N$, consider the internal set
$$B_N:=\{(s,t)\in \r^*\times \r^* \ | \ s,t\geq N \text{ and } r_1(s) \not \propto_R r_2(t)\}.$$  By assumption, each $B_N$ is nonempty.  By saturation, there exists $(\sigma,\tau) \in \bigcap \{B_N \ | \ N\in \n\}$.  Then $\sigma,\tau \in \r^{>0}_{\inf}$ and $r_1(\sigma)\not \propto_R r_2(\tau)$, which implies that $r_1(\sigma)\not \propto r_2(\tau)$.

$(2)\Rightarrow (3)$ is trivial.

$(3)\Rightarrow (2)$:  Suppose $\sigma, \tau \in \r^{>0}_{\inf}$ are such that $r_1(\sigma)\propto r_2(\tau)$ and let $\sigma', \tau'\in \r^{>0}_{\inf}$ be arbitrary.  Then (2) follows from the fact that $r_1(\sigma)\propto r_1(\sigma')$ and $r_2(\tau)\propto r_2(\tau')$, which in turn follows from Lemma \ref{L:properray} and Remark \ref{L:asymprmks} (2).
\end{proof}

\noindent The following lemma combines Lemmas \ref{L:asympgeo} and \ref{L:nonstend}.

\begin{lemma}\label{L:nonstendgeo}
Suppose $X$ is a proper \emph{geodesic} space and $r_1,r_2:[0,\infty)\to X$ are proper rays.  Then the following are equivalent:
\begin{enumerate}
\item $\e(r_1)=\e(r_2)$;
\item For all (equiv. for some) $\sigma, \tau \in \r^{>0}_{\inf}$, $r_1(\sigma)\propto r_2(\tau)$;
\item For all (equiv. for some) $\sigma, \tau \in \r^{>0}_{\inf}$ and every $\epsilon\in (\r^*)^{>0}$, there is a hyperfinite sequence $a_0,\ldots,a_\nu$ in $X_{\inf}$ such that $a_0=r_1(\sigma)$, $a_\nu=r_2(\tau)$ and $d(a_i,a_{i+1})<\epsilon$ for each $i<\nu$;
\item For all (equiv. for some) $\sigma, \tau \in \r^{>0}_{\inf}$, there is a hyperfinite sequence $a_0,\ldots,a_\nu$ in $X_{\inf}$ such that $a_0=r_1(\sigma)$, $a_\nu=r_2(\tau)$ and $d(a_i,a_{i+1})\in \r_{\f}$ for each $i<\nu$.
\end{enumerate}
\end{lemma}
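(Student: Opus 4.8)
The plan is to deduce this combined statement directly from Lemmas~\ref{L:asympgeo} and~\ref{L:nonstend}, with essentially no new work beyond a short quantifier-bookkeeping argument and one observation about the endpoints. First I would record the equivalence $(1)\iff(2)$: this is exactly Lemma~\ref{L:nonstend}, which moreover already establishes that the ``for all'' and ``for some'' versions of the condition $r_1(\sigma)\propto r_2(\tau)$ coincide and are each equivalent to $\e(r_1)=\e(r_2)$. Thus the ``(equiv.\ for some)'' phrasing in $(2)$ is justified by that lemma, and $(1)$ will serve as the anchor to which I attach everything else.

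The crucial observation is that the sequence conditions in $(3)$ and $(4)$ are even meaningful. Since $r_1,r_2$ are \emph{proper} rays and $\sigma,\tau\in\r^{>0}_{\inf}$, Corollary~\ref{L:properray} gives $r_1(\sigma),r_2(\tau)\in X_{\inf}$. This is precisely the hypothesis needed to apply Lemma~\ref{L:asympgeo} to the pair $x:=r_1(\sigma)$, $y:=r_2(\tau)$ (recall that the hyperfinite chains in its clauses are required to lie in $X_{\inf}$, with endpoints $x,y$). For each fixed such pair, Lemma~\ref{L:asympgeo} then yields the pointwise equivalences: $r_1(\sigma)\propto r_2(\tau)$ holds if and only if the $\epsilon$-chain condition of clause $(3)$ holds, if and only if the $\r_{\f}$-step chain condition of clause $(4)$ holds.

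Finally I would propagate these pointwise equivalences through the outer quantifiers. Quantifying uniformly over $\sigma,\tau\in\r^{>0}_{\inf}$ shows that the ``for all'' versions of $(2)$, $(3)$, $(4)$ are mutually equivalent, and likewise the ``for some'' versions are mutually equivalent. Splicing this together with the chain ``for all $\iff$ for some $\iff(1)$'' furnished by Lemma~\ref{L:nonstend}, every listed variant becomes equivalent to $\e(r_1)=\e(r_2)$, which is the assertion.

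I do not anticipate a genuine obstacle: the lemma is by design a repackaging of the two preceding results. The only points requiring care are invoking Corollary~\ref{L:properray} so that the endpoints $r_1(\sigma),r_2(\tau)$ genuinely lie in $X_{\inf}$ before appealing to Lemma~\ref{L:asympgeo}, and keeping the ``for all / for some'' accounting straight so that none of the equivalences is inadvertently weakened.
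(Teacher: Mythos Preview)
Your proposal is correct and matches the paper's approach: the paper presents Lemma~\ref{L:nonstendgeo} with no proof at all, stating only that it ``combines Lemmas~\ref{L:asympgeo} and~\ref{L:nonstend},'' and your argument spells out precisely that combination. Your care in invoking Corollary~\ref{L:properray} to ensure $r_1(\sigma),r_2(\tau)\in X_{\inf}$ before applying Lemma~\ref{L:asympgeo}, and your explicit quantifier bookkeeping for the ``for all/for some'' variants, are more detailed than anything the paper provides but are exactly the right justifications.
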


\noindent For $x\in X_{\inf}$, let $[x]$ denote its equivalence class under $\propto$ and refer to $[x]$ as the \emph{infinite path component of $x$}.  We denote the set of infinite path components of $X$ by $$\ipc(X):=\{[x] \ | \ x\in X_{\inf}\}.$$  Fix $\sigma \in \r^{>0}_{\inf}$.  Then Lemma \ref{L:nonstend} allows us to define a map $$\Theta:\ee(X)\to \operatorname{IPC}(X), \quad \Theta(\e(r))=[r(\sigma)].$$  Lemma \ref{L:nonstend} further implies that $\Theta$ is injective and independent of the choice of $\sigma$.

\begin{lemma}\label{L:IPC}
Suppose $X$ is a proper \emph{geodesic} space.  Then $\Theta$ is a bijection.  
\end{lemma}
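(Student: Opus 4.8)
The plan is to show that $\Theta$ is surjective, since injectivity is already established. Given an arbitrary infinite path component $[x]$ with $x \in X_{\inf}$, I must produce a proper ray $r:[0,\infty) \to X$ whose associated end $\e(r)$ satisfies $\Theta(\e(r)) = [x]$, i.e. $[r(\sigma)] = [x]$ for the fixed $\sigma \in \r^{>0}_{\inf}$. The natural strategy is to manufacture $r$ from the internal geometry near $x$ and then verify it is a genuine (standard, proper) ray landing in the same infinite path component.

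First I would exploit properness and the geodesic hypothesis to connect $x$ back to a concrete, well-controlled object. Since $x \in X_{\inf}$, the distance $d(x_0, x)$ is infinite. I would pick a standard enumeration of the closed balls $B(x_0, n)$ for $n \in \n$ and, using the geodesic structure, consider an internal geodesic segment from $x_0$ (or from a nearstandard point) out to $x$. The key move is to extract a \emph{standard} ray from internal data. Concretely, for each $n \in \n$ I want to find a genuine point $p_n \in X$ that is ``on the way out'' toward $x$ and such that consecutive points are within a bounded distance; by an overflow/saturation argument I can arrange an increasing sequence $p_0, p_1, p_2, \ldots$ in $X$ with $d(x_0, p_n) \to \infty$ and $d(p_n, p_{n+1})$ bounded, lying along geodesics heading toward $x$. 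Connecting the $p_n$ by standard geodesic segments yields a standard ray $r$; its properness follows from $d(x_0, p_n) \to \infty$ together with Corollary \ref{L:properray}, since for $\sigma \in \r^{>0}_{\inf}$ the point $r(\sigma)$ sits beyond every standard ball and hence lies in $X_{\inf}$.

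The remaining task is to check $\Theta(\e(r)) = [x]$, that is $r(\sigma) \propto x$. Here I would invoke the discrete characterization of $\propto$ from Lemma \ref{L:asympgeo}(3): it suffices to produce a hyperfinite sequence in $X_{\inf}$ from $r(\sigma)$ to $x$ with consecutive distances in $\r_{\f}$. The internal geodesic running along the $p_n$ out to $x$, suitably sampled at a hyperfinite scale, should furnish exactly such a sequence, since the whole tail of this path lives in $X_{\inf}$ (every point on it is beyond all standard balls) and the sampling controls step sizes. Concatenating via Lemma \ref{L:asympgeo} then gives $r(\sigma) \propto x$, so $\Theta(\e(r)) = [x]$ and $\Theta$ is onto.

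\textbf{The main obstacle} I anticipate is the passage from the internal segment reaching $x$ to an honest \emph{standard} proper ray, and ensuring the standard ray I build actually stays in the same infinite path component as $x$ rather than drifting into a different one. The delicate point is that $x$ is an arbitrary internal point in $X_{\inf}$, not obviously nearstandard or reachable by a standard path; I must use saturation carefully to select the intermediate standard points $p_n$ so that the standard ray's tail and the internal path to $x$ can be stitched together into a single hyperfinite $\propto$-witnessing sequence. Establishing that this stitching keeps every intermediate point in $X_{\inf}$ (so that Lemma \ref{L:asympgeo}(3) genuinely applies) is the crux, and it is where the geodesic hypothesis and Corollary \ref{L:properray} must be combined most carefully.
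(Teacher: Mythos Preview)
Your overall strategy coincides with the paper's: take an internal geodesic $\hat r:[0,\sigma]\to X^*$ from $x_0$ to $x$ (with $\sigma=d(x_0,x)$), extract from it a standard proper ray $r$, and then argue that $r(\sigma)\propto x$. Where you diverge is in the extraction step and, more importantly, in the final ``stitching''.

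The paper's extraction is sharper than yours. Rather than picking a discrete sequence $p_n$ and reconnecting by standard geodesic segments, the paper simply sets $r(t):=\st(\hat r(t))$ for every standard $t\in[0,\infty)$; a one-line computation shows this is a genuine \emph{geodesic} ray, hence automatically proper. This bypasses your saturation step and avoids the ambiguity of what ``points on the way out toward $x$'' means. Your piecewise-geodesic construction can be made to work, but it does not buy anything over the standard-part map and makes the next step harder.

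The genuine gap is in your final paragraph. You write that ``the internal geodesic running along the $p_n$ out to $x$, suitably sampled'' will witness $r(\sigma)\propto x$. But there is no single internal path that both passes through your standard points $p_n$ (or through the image of your standard ray $r$) \emph{and} terminates at $x$: the $p_n$ are only infinitesimally close to $\hat r(n)$, and your standard ray $r$ is a different object from $\hat r$. What you actually need is a bridge from $r(\sigma)$ to some point $\hat r(\nu)$ with $\nu$ infinite, after which the tail of $\hat r$ takes you to $x$. The paper supplies exactly this bridge via overflow: for fixed standard $\epsilon>0$, the internal set $\{\nu\in\n^*:\nu\le\sigma,\ d(\hat r(\nu),r(\nu))<\epsilon\}$ contains all of $\n$ (since $r(n)=\st(\hat r(n))$), hence by overflow contains some infinite $\nu$. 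Then $r(\nu)$ and $\hat r(\nu)$ are a finite distance apart in $X_{\inf}$, so $r(\sigma)\propto r(\nu)\propto\hat r(\nu)\propto\hat r(\sigma)=x$. Your Lemma~\ref{L:asympgeo}(3) sampling idea does not by itself produce this $\nu$; you still need the overflow step to link the two rays at an infinite parameter, and that is the piece your plan omits.
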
   

\begin{proof}
Let $x\in X_{\inf}$ and let $\sigma:=d(x,x_0)\in \r^{>0}_{\inf}$.  Let $\hat{r}:[0,\sigma]\to X^*$ be an internal geodesic connecting $x_0$ and $x$.  Note that $\hat{r}(t)\in X_{\f}=X_{\ns}$ for $t\in \r^{>0}_{\f}$.  We may thus define $r:[0,\infty)\to X$ by $r(t):=\st(\hat{r}(t))$.  Note that $r$ is a geodesic ray.  Indeed, for $t,t'\in [0,\infty)$, we have
$$d(r(t),r(t'))=d(\st(\hat{r}(t)),\st(\hat{r}(t')))=\st(d(\hat{r}(t),\hat{r}(t')))=\st(|t-t'|)=|t-t'|.$$ To finish the proof of the lemma, it suffices to show that $r(\sigma)\propto x$, as then $\Theta(\e(r))=[x]$.  Fix $\epsilon \in \r^{>0}$.  Then the set $$\{\nu\in \n^* \ | \ \nu\leq \sigma \wedge d(\hat{r}(\nu),r(\nu))<\epsilon\}$$ is internal and contains all of $\n$.  By overflow, we must have $\nu\in \n^*\setminus \n$ such that $\nu\leq \sigma$ and $d(r(\nu),\hat{r}(\nu))<\epsilon$.  Connecting $r(\nu)$ and $\hat{r}(\nu)$ by an internal geodesic, we see that $r(\nu)\propto \hat{r}(\nu)$.  Since $r(\sigma)\propto r(\nu)$ and $\hat{r}(\nu)\propto \hat{r}(\sigma)=x$, we are finished.  
\end{proof}

\begin{rmk}
The above lemma makes it immediately clear that the proper geodesic space $\r^n$, equipped with its usual metric, has two ends if $n=1$ and one end if $n\geq 2$.
\end{rmk}

\noindent \textbf{Notation:}  Let $\G_{x_0}(X)$ denote the set of geodesic rays in $X$ emanating from $x_0$.

\begin{cor}\label{T:geosurj}
The map $r\mapsto \e(r):\G_{x_0}(X)\to \ee(X)$ is surjective.
\end{cor}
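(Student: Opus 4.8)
The goal is to show that every end of $X$ is represented by a geodesic ray emanating from $x_0$; that is, the composite $\G_{x_0}(X)\to \ee(X)$ is surjective. The plan is to reduce surjectivity onto $\ee(X)$ to the already-established bijection $\Theta:\ee(X)\to\ipc(X)$ from Lemma \ref{L:IPC}, and then observe that the \emph{construction} carried out in the proof of that lemma in fact produces a geodesic ray based at $x_0$.

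First I would fix an arbitrary end $e\in\ee(X)$ and set $[x]:=\Theta(e)$ for some $x\in X_{\inf}$, which is legitimate since $\Theta$ is a bijection by Lemma \ref{L:IPC}. The crucial point is that the proof of Lemma \ref{L:IPC} did not merely assert the existence of a preimage abstractly: given $x\in X_{\inf}$, it explicitly built a ray $r:[0,\infty)\to X$ by taking an internal geodesic $\hat r:[0,\sigma]\to X^*$ from $x_0$ to $x$ (where $\sigma=d(x,x_0)\in\r^{>0}_{\inf}$) and setting $r(t):=\st(\hat r(t))$. That argument verified, via the distance computation $d(r(t),r(t'))=|t-t'|$, that $r$ is a \emph{geodesic} ray, and moreover that it emanates from $x_0$ since $r(0)=\st(\hat r(0))=\st(x_0)=x_0$. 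Thus $r\in\G_{x_0}(X)$, and the proof showed $\Theta(\e(r))=[x]=\Theta(e)$.

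The final step is to invoke injectivity of $\Theta$ (also from Lemma \ref{L:IPC}, or already from Lemma \ref{L:nonstend}) to conclude from $\Theta(\e(r))=\Theta(e)$ that $\e(r)=e$. Since $e$ was arbitrary and $r\in\G_{x_0}(X)$, the map $r\mapsto\e(r)$ restricted to geodesic rays based at $x_0$ is surjective onto $\ee(X)$.

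I do not anticipate a genuine obstacle here, as this corollary is essentially a repackaging of the construction inside Lemma \ref{L:IPC}: the only substantive geometric input—that the standard part of an internal geodesic from $x_0$ is an honest geodesic ray from $x_0$—has already been carried out there using properness ($X_{\f}=X_{\ns}$) and saturation. The only point requiring a little care is making explicit that the preimage supplied by $\Theta$ can be chosen to be a geodesic based precisely at $x_0$ rather than merely a proper ray; but this is immediate from inspecting the construction, since $\hat r(0)=x_0$ forces $r(0)=x_0$. One could alternatively phrase the corollary as the statement that the restriction of $\Theta$ to $\{\e(r):r\in\G_{x_0}(X)\}$ is already onto $\ipc(X)$, which is exactly what the proof of Lemma \ref{L:IPC} demonstrates.
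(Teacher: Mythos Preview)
Your proposal is correct and is exactly the approach the paper takes: the paper's proof consists of the single sentence ``Immediate from the proof of Lemma \ref{L:IPC},'' and you have simply unpacked why that construction (taking the standard part of an internal geodesic from $x_0$ to $x$) yields a geodesic ray in $\G_{x_0}(X)$ whose end maps to $[x]$ under $\Theta$. There is nothing to add.
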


\begin{proof}
Immediate from the proof of Lemma \ref{L:IPC}.
\end{proof}

\noindent A useful property of a space with finitely many ends is that one can ``separate" the ends with a ball centered around $x_0$ of finite radius.  This may not be possible for a space with infinitely many ends.  However, we can separate the ends with a ball centered at $x_0$ of \emph{hyperfinite radius}.

\begin{lemma}
Suppose that $X$ has infinitely many ends.  Let $\{r_i \ | \ i\in I\}\subseteq \G_{x_0}(X)$ be distinct such that $\{\e(r_i) \ | \ i\in I\}$ enumerates the ends of $X$.  Then for every $\sigma\in \n^*\setminus \n$ and for all distinct $i,j\in I$, we have $r_i(\sigma)\not\propto r_j(\sigma)$.
\end{lemma}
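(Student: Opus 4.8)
The plan is to derive this statement immediately from the nonstandard characterization of ends in Lemma \ref{L:nonstend}. The entire content of that lemma is that ``converging to the same end'' is detected by the relation $\propto$ evaluated at \emph{any} pair of infinite parameters; consequently a single infinite radius $\sigma$ separates all of the ends at once, which is precisely the nonstandard substitute for the finite-radius separation available when there are only finitely many ends. The one point about the hypotheses that I would record first is that the enumeration is injective on ends: since the rays $r_i$ are distinct and $\{\e(r_i)\mid i\in I\}$ enumerates $\ee(X)$, the assignment $i\mapsto \e(r_i)$ is a bijection onto $\ee(X)$, so for distinct $i,j\in I$ we have $\e(r_i)\neq\e(r_j)$. (If two distinct indices produced the same end, then the asserted conclusion would already fail by Lemma \ref{L:nonstend}, so injectivity is forced by the statement itself.)

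With this in hand, fix $\sigma\in\n^*\setminus\n$ and distinct $i,j\in I$. Since an infinite hypernatural is in particular a positive infinite hyperreal, we have $\sigma\in\r^{>0}_{\inf}$. I would then apply the equivalence of (1) and (3) in Lemma \ref{L:nonstend} to the pair $(r_i,r_j)$: because $\e(r_i)\neq\e(r_j)$, condition (3) must fail, so there exist \emph{no} $\sigma',\tau'\in\r^{>0}_{\inf}$ with $r_i(\sigma')\propto r_j(\tau')$. Specializing to $\sigma'=\tau'=\sigma$ yields $r_i(\sigma)\not\propto r_j(\sigma)$, which is exactly the desired conclusion.

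There is essentially no obstacle here, since all of the work has been absorbed into Lemma \ref{L:nonstend}, whose ``for all / for some'' formulation is exactly what allows one fixed $\sigma$ to separate every pair uniformly over the (possibly infinite) index set $I$. The only things that require checking are the set-theoretic inclusion $\n^*\setminus\n\subseteq\r^{>0}_{\inf}$ and the fact that pairwise distinctness of the ends is genuinely part of the hypothesis; both are immediate. It is worth emphasizing in the write-up that the real payoff is conceptual: the same infinite $\sigma$ works for all distinct $i,j$ simultaneously, making the ball $B(x_0,\sigma)$ of hyperfinite radius a single object that separates infinitely many ends at once.
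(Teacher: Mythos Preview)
Your proof is correct and is exactly the paper's approach: the paper's proof is the single line ``Immediate from Lemma \ref{L:nonstend},'' and you have unpacked precisely that implication. Your remark about injectivity of $i\mapsto \e(r_i)$ is a fair clarification of an implicit hypothesis, but otherwise there is nothing to add.
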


\begin{proof}
Immediate from Lemma \ref{L:nonstend}.
\end{proof}

\noindent If $X$ is a proper geodesic space, then $\ee(X)$ can be topologized in the following manner.  Fix $r\in \G_{x_0}(X)$.  For $n>0$, let $\tilde{V_n}(r)$ be the set of $r'\in \G_{x_0}(X)$ such that $r'(m)\propto_n r(m)$ for all (equiv. some) $m>n$.  Let $V_n(r):=\{\e(r') \ | \ r'\in \tilde{V_n}(r)\}$.  Then the sets $V_n(r)$ form a neighborhood basis of $\e(r)$ in $\ee(X)$.

We now give a nonstandard description of the topology on $\ee(X)$ by giving a description of the monad system of $\ee(X)$.  By Corollary \ref{T:geosurj}, we can think of $\ee(X)$ as $\G_{x_0}(X)$ modulo the equivalence relation of two rays having the same end.  By the Transfer Principle, $\G_{x_0}(X)^*$ is the set of internally geodesic rays in $X^*$ emanating from $x_0$ and $\ee(X)^*$ is the quotient of $\G_{x_0}(X)^*$ modulo the equivalence relation which is the extension of the equivalence relation of two rays having the same end.  One should note that if $r\in \G_{x_0}(X)^*$, then $r(\sigma)\in X_{\inf}$ for $\sigma \in \r^{>0}_{\inf}$.

\begin{lemma}\label{L:endmonad}
For $r\in \G_{x_0}(X)$ and $r' \in \G_{x_0}(X)^*$, we have $\e(r')\in \mu(\e(r))$ if and only if $r'(\sigma)\propto r(\sigma)$ for some (equivalently, all) $\sigma \in \r^{>0}_{\inf}$.
\end{lemma}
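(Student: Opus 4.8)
The plan is to unwind the definition of the monad $\mu(\e(r))$ coming from the neighborhood basis $\{V_n(r)\}_{n>0}$ and translate the condition $\e(r')\in\mu(\e(r))$ into the relation $\propto$ on points, using overflow/underflow to pass between the standard finite scale $n$ and an infinite scale. Recall that in any nonstandard topology, the monad of a point $p$ is $\mu(p)=\bigcap\{U^*\mid U \text{ a standard open neighborhood of } p\}$; equivalently, since the $V_n(r)$ form a neighborhood basis, $\e(r')\in\mu(\e(r))$ if and only if $\e(r')\in V_n(r)^*$ for every standard $n>0$. By the definition of $V_n(r)$ and transfer, $\e(r')\in V_n(r)^*$ means precisely that $r'(m)\propto_n r(m)$ for all (equivalently some) $m>n$ in $\r^*$. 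So I first record the equivalence
$$
\e(r')\in\mu(\e(r))\quad\Longleftrightarrow\quad \text{for every }n>0,\ r'(m)\propto_n r(m)\text{ for some }m>n.
$$

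For the forward direction, assume $\e(r')\in\mu(\e(r))$ and fix any $\sigma\in\r^{>0}_{\inf}$. I want to produce an infinite scale $\nu$ with $r'(\sigma)\propto_\nu r(\sigma)$, which by the Remark following the definition of $\propto_\sigma$ immediately gives $r'(\sigma)\propto r(\sigma)$. Consider the internal set
$$
A:=\{\nu\in\n^*\mid \nu<\sigma\ \wedge\ r'(\sigma)\propto_\nu r(\sigma)\}.
$$
The hypothesis, together with the observation that $\propto_n$ is coarser as $n$ grows and that $\sigma>n$ for all standard $n$, should give $n\in A$ for every standard $n$; here I must be slightly careful to evaluate the paths at $\sigma$ rather than at an arbitrary $m>n$, which is where I would invoke Lemma \ref{L:properray} and Remark \ref{L:asymprmks} to replace $r(m)$ by $r(\sigma)$ (and likewise for $r'$) via internal geodesic segments staying in $X_{\inf}$. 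Then overflow yields $\nu\in\n^*\setminus\n$ in $A$, and we are done.

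For the converse, suppose $r'(\sigma)\propto r(\sigma)$ for some $\sigma\in\r^{>0}_{\inf}$; by underflow there is an infinite $\nu$ with $r'(\sigma)\propto_\nu r(\sigma)$. Fix a standard $n>0$; I must show $\e(r')\in V_n(r)^*$, i.e.\ $r'(m)\propto_n r(m)$ for some $m>n$. Taking $m:=\sigma$, it suffices to check $r'(\sigma)\propto_n r(\sigma)$, and this follows because $r'(\sigma)\propto_\nu r(\sigma)$ with $\nu>n$ implies $r'(\sigma)\propto_n r(\sigma)$ (a path avoiding the ball of infinite radius $\nu$ a fortiori avoids the ball of radius $n$). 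The \emph{some/all} interchangeability in $\sigma$ is then just Lemma \ref{L:nonstend} applied to $r,r'$ regarded through their ends.

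The main obstacle I anticipate is the bookkeeping in the forward direction: establishing $n\in A$ requires moving from the neighborhood condition $r'(m)\propto_n r(m)$ holding at \emph{some} $m>n$ to the relation holding specifically at the \emph{fixed} infinite parameter $\sigma$. This is exactly the kind of rescaling handled by the earlier results—once one has $r'(m)\propto_n r(m)$ one concatenates with the infinite-path segments $r(m)\propto r(\sigma)$ and $r'(m)\propto r'(\sigma)$ guaranteed by Lemma \ref{L:properray} and Remark \ref{L:asymprmks}—but one should note that $\propto$ and $\propto_n$ live at different scales, so the concatenation argument proves $r'(\sigma)\propto r(\sigma)$ directly, after which underflow supplies the requisite infinite $\nu$. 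In practice it is cleaner to prove the two directions at the level of $\propto$ rather than chasing $\propto_n$ through each step, and then convert at the very end via over/underflow; this keeps the argument short and avoids repeatedly tracking the dependence of the scale on $\sigma$.
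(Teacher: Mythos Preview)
Your proposal is correct and follows essentially the same overflow-based strategy as the paper. Two minor remarks: the ``obstacle'' you anticipate in the forward direction vanishes once you use the ``for all $m>n$'' formulation of $\tilde V_n(r)$---transferring it lets you plug in $m=\sigma$ directly, so $n\in A$ is immediate with no concatenation needed (this is exactly how the paper's internal set $A=\{\nu\in\n^*: (\forall\sigma>\nu)\ r(\sigma)\propto_\nu r'(\sigma)\}$ is designed); and your invocation of Lemma~\ref{L:nonstend} for the some/all equivalence in $\sigma$ is formally off since that lemma is stated for standard rays, though its $(3)\Rightarrow(2)$ argument carries over verbatim to internal geodesic rays. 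Your backward direction, taking $m=\sigma$ and using $\propto_\nu\Rightarrow\propto_n$, is actually slicker than the paper's, which instead builds the concatenated path $r'(m)\to r'(\sigma)\to r(\sigma)\to r(m)$ for arbitrary $m>n$.
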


\begin{proof}
$(\Rightarrow)$ Suppose $\e(r')\in \mu(\e(r))$.  Consider the internal set
$$A:=\{\nu\in \n^* \ | \ (\forall \sigma \in \r^*)(\sigma>\nu \to r(\sigma)\propto_\nu r'(\sigma)\}.$$  Since $r'\in \tilde{V_n}(r)^*$ for each $n$, we have $\n \subseteq A$.  By overflow, there is $\nu \in \n^*\setminus \n$ such that $\nu \in A$.  Hence, if $\sigma \in \r^*$ is such that $\sigma >\nu$, then $r(\sigma) \propto r'(\sigma)$.

$(\Leftarrow)$ Suppose $\sigma \in \r^{>0}_{\inf}$ is such that $r'(\sigma)\propto r(\sigma)$.  Fix $n>0$.  We want to show that $\e(r')\in V_n(r)^*$.  For $m>n$, consider the internal path connecting $r'(m)$ and $r(m)$ obtained by first connecting $r'(m)$ and $r'(\sigma)$ using $r'$, then connecting $r'(\sigma)$ and $r(\sigma)$ with an internal path contained entirely in $X_{\inf}$, then finally connecting $r(\sigma)$ and $r(m)$ using $r$.  This internal path lies entirely in $X^*\setminus B(x_0,n)^*$, so $\e(r')\in V_n(r)^*$. 
\end{proof}

\begin{cor}
$\ee(X)$ is a compact hausdorff space.
\end{cor}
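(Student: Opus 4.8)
The plan is to establish compactness and the Hausdorff property separately, using the nonstandard criterion for the monad system of $\ee(X)$ furnished by Lemma \ref{L:endmonad}. Recall the general nonstandard principles: a space is Hausdorff if and only if distinct standard points have disjoint monads (equivalently, no point of $X^*$ is nearstandard to two distinct standard points), and a space is compact if and only if every point of $X^*$ is nearstandard, i.e. lies in the monad of some standard point. I would verify each of these directly for $\ee(X)$.

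For the Hausdorff property, suppose $\e(r_1) = \e(r_2)$ in $\ee(X)$ with $r_1, r_2 \in \G_{x_0}(X)$, and suppose some $\e(r')$ with $r' \in \G_{x_0}(X)^*$ lies in both $\mu(\e(r_1))$ and $\mu(\e(r_2))$. Fix $\sigma \in \r^{>0}_{\inf}$. By Lemma \ref{L:endmonad} applied twice, we get $r'(\sigma) \propto r_1(\sigma)$ and $r'(\sigma) \propto r_2(\sigma)$. Since $\propto$ is an equivalence relation on $X_{\inf}$ (Remark \ref{L:asymprmks}) and all three points lie in $X_{\inf}$, transitivity gives $r_1(\sigma) \propto r_2(\sigma)$. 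By Lemma \ref{L:nonstend}, this forces $\e(r_1) = \e(r_2)$, so distinct standard ends have disjoint monads. Hence $\ee(X)$ is Hausdorff.

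For compactness, I must show every $\e(r')$ with $r' \in \G_{x_0}(X)^*$ is nearstandard, that is, $\e(r') \in \mu(\e(r))$ for some standard geodesic ray $r \in \G_{x_0}(X)$. Fix $\sigma \in \r^{>0}_{\inf}$; then $r'(\sigma) \in X_{\inf}$ as noted just before Lemma \ref{L:endmonad}. The key idea is that the argument in the proof of Lemma \ref{L:IPC} produces, from any point of $X_{\inf}$, a \emph{standard} geodesic ray whose end captures the infinite path component of that point. Concretely, I apply the construction of Lemma \ref{L:IPC} to the point $x := r'(\sigma)$: taking an internal geodesic from $x_0$ to $x$ and standardizing it yields a standard geodesic ray $r \in \G_{x_0}(X)$ with $r(d(x,x_0)) \propto x = r'(\sigma)$. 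Since $\propto$ only depends on the infinite path component, $r(\tau) \propto r'(\sigma)$ for any $\tau \in \r^{>0}_{\inf}$, and in particular $r(\sigma) \propto r'(\sigma)$. By Lemma \ref{L:endmonad}, $\e(r') \in \mu(\e(r))$, so $\e(r')$ is nearstandard. Therefore $\ee(X)$ is compact.

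The main obstacle is the compactness half, and specifically ensuring that the standardization procedure in Lemma \ref{L:IPC} applies cleanly to an arbitrary internal point $r'(\sigma)$ coming from an internal geodesic ray rather than to a point arising from a fixed internal geodesic to $x_0$. The point to check is that $d(x, x_0) = \sigma \in \r^{>0}_{\inf}$ (so that the resulting ray is genuinely a ray to infinity) and that the standardized ray $r$ is well-defined via the properness hypothesis, which guarantees $\hat r(t) \in X_{\f} = X_{\ns}$ for finite $t$. Once these are in place, the conclusion $r(\sigma) \propto r'(\sigma)$ follows exactly as in the overflow argument at the end of Lemma \ref{L:IPC}, and invoking Lemma \ref{L:endmonad} closes the proof.
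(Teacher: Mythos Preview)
Your argument is correct and matches the paper's approach: Hausdorffness from disjoint monads via Lemmas \ref{L:nonstend} and \ref{L:endmonad}, and compactness by producing a standard ray from $r'$ via the standardization in the proof of Lemma \ref{L:IPC} (the paper does this a hair more directly by setting $r(t):=\st(r'(t))$, using $r'$ itself as the internal geodesic). Note one harmless slip in your Hausdorff paragraph: the opening clause ``suppose $\e(r_1)=\e(r_2)$'' should read $\e(r_1)\ne\e(r_2)$ (or simply be dropped), since you are arguing that intersecting monads force the ends to coincide.
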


\begin{proof}
Lemmas \ref{L:nonstend} and \ref{L:endmonad} make it clear that any two distinct monads are disjoint, whence $\ee(X)$ is hausdorff.  Now suppose $r'\in \G_{x_0}(X)^*$.  To show that $\ee(X)$ is compact, we need to find $r\in \G_{x_0}(X)$ such that $\e(r')\in \mu(\e(r))$.  The desired geodesic ray is obtained by defining $r(t):=\st(r'(t))$; the details are identical to those in the proof of Lemma \ref{L:IPC}.
\end{proof}

Equip $\ipc(X)$ with the unique topology which makes $\Theta$ a homeomorphism.  The next lemma gives a more concrete description of this topology on $\ipc(X)$.

\begin{lemma}
Fix $[x]\in \ipc(X)$.  Let $V_n([x]):=\{[x']\ | \ x'\propto_n x\}$.  Then the family of sets $V_n([x])$ form a basis of neighborhood of $[x]$ in $\ipc(X)$. 
\end{lemma}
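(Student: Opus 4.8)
The plan is to transport the topological statement across the homeomorphism $\Theta$. Since $\Theta$ is defined to be a homeomorphism between $\ee(X)$ and $\ipc(X)$, it suffices to show that the sets $V_n([x])$ are precisely the images under $\Theta$ of the basic neighborhoods $V_n(r)$ from the topology on $\ee(X)$, up to the identification $[x] = \Theta(\e(r))$. Concretely, I would fix $[x] \in \ipc(X)$, choose (via Corollary \ref{T:geosurj} and Lemma \ref{L:IPC}) a geodesic ray $r \in \G_{x_0}(X)$ with $\Theta(\e(r)) = [x]$, and then compare $\Theta(V_n(r))$ with the proposed basic set $V_n([x])$.

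The cleanest route is to give a nonstandard characterization of membership in $V_n([x])$ mirroring Lemma \ref{L:endmonad}, and then invoke the fact that a family of sets forms a neighborhood basis exactly when their images under saturation generate the correct monads. First I would verify the analogue of Lemma \ref{L:endmonad} for $\ipc(X)$: for $[x] \in \ipc(X)$ and $[x'] \in \ipc(X)^*$, we have $[x'] \in \mu([x])$ if and only if $x' \propto_\nu x$ for some (equivalently all) $\nu \in \n^* \setminus \n$. This should follow directly from Lemma \ref{L:endmonad} together with the definition of $\Theta$, since $\Theta(\e(r)) = [r(\sigma)]$ and $\Theta$ is a homeomorphism. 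The key transfer step is then that $\e(r') \in \mu(\e(r))$ corresponds, under $\Theta$, exactly to $r'(\sigma) \propto r(\sigma)$, which via underflow is equivalent to $r'(\sigma) \propto_\nu r(\sigma)$ for some infinite $\nu$.

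With this monad description in hand, I would show that the $V_n([x])$ form a neighborhood basis by checking that $[x'] \in \mu([x])$ if and only if $[x'] \in V_n([x])^*$ for all $n$. The forward direction uses the monad characterization: if $x' \propto_\nu x$ for infinite $\nu$, then in particular $x' \propto_n x$ holds after transfer for each standard $n$ (since an internal path avoiding $B(x_0,\nu)$ certainly avoids $B(x_0,n)$), placing $[x']$ in every $V_n([x])^*$. For the reverse direction, the overflow argument is the same one used in Lemma \ref{L:endmonad}: if $[x'] \in V_n([x])^*$ for every standard $n$, then the internal set $\{\nu \in \n^* \mid x' \propto_\nu x\}$ contains all of $\n$, so by overflow it contains an infinite $\nu$, giving $x' \propto x$ and hence $[x'] \in \mu([x])$.

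The main obstacle I anticipate is bookkeeping the translation between the two topologies rather than any deep difficulty: I must ensure that the relation $x' \propto_n x$ in the definition of $V_n([x])$ matches up correctly with the relation $r'(m) \propto_n r(m)$ used to define $V_n(r)$ in $\ee(X)$, taking care that choosing representatives at the appropriate scale (finite radius $n$ versus a point at infinite distance $\sigma$) does not introduce an incompatibility. The subtlety is that $x$ and $x'$ live in $X_{\inf}$ at genuinely infinite distance from $x_0$, whereas $\propto_n$ tests connection outside a ball of \emph{finite} radius; I would confirm that this poses no problem because, for points in $X_{\inf}$, the infinite portion of any internal geodesic used to realize $\propto$ automatically stays outside $B(x_0,n)$ for standard $n$. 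Once this compatibility is established, the result is a formal consequence of $\Theta$ being a homeomorphism and the monad descriptions agreeing.
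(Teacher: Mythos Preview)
Your first paragraph is exactly the paper's approach: fix $r\in\G_{x_0}(X)$ with $\Theta(\e(r))=[x]$ and show $\Theta(V_n(r))=V_n([x])$. The paper carries this out in three lines: the inclusion $\Theta(V_n(r))\subseteq V_n([x])$ is immediate from the definitions and transfer, and for the reverse inclusion one picks $r'\in\G_{x_0}(X)$ with $\Theta(\e(r'))=[x']$, observes $r'(\sigma)\propto_n r(\sigma)$, and concludes $r'\in\tilde V_n(r)$.

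Your ``cleanest route'' via monads is a genuine detour, and it carries a technical wrinkle you should be aware of. The set $\ipc(X)$ is defined as a quotient of the \emph{external} set $X_{\inf}$ by the \emph{external} relation $\propto$, so there is no direct way to form $\ipc(X)^*$ or $V_n([x])^*$ by transfer; the only way to make sense of these is to pass through $\Theta$ to the standard side $\ee(X)$ and star there. But once you are forced to work on the $\ee(X)$ side, the identity $\Theta(V_n(r))=V_n([x])$ is both what you need and easier to prove than the monad statement. In short, your monad argument is not wrong in spirit, but making it rigorous requires exactly the comparison you proposed first, so the detour buys nothing.
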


\begin{proof}
Fix $r\in \G_{x_0}(X)$ such that $\Theta(\e(r))=[x]$.  We will show that $\Theta(V_n(r))=V_n([x])$.  The fact that $\Theta(V_n(r))\subseteq V_n([x])$ follows immediately from the definitions and the transfer principle.  Now suppose that $[x']\in V_n([x])$.  Choose $r'\in \G_{x_0}(X)$ such that $\Theta(\e(r'))=[x']$.  Then $r'(\sigma)\propto_n r(\sigma)$, whence it follows that $r'\in \tilde{V_n}(r)$ and hence $[x']\in \Theta(V_n(r))$.  
\end{proof}

\section{Ends and Quasi-Isometries}

In this section, we use our nonstandard description of ends to give a proof of the fact that quasi-isometries between two proper geodesic spaces induce homemorphisms on the corresponding end spaces.  We begin by defining quasi-isometries and proving a few facts concerning quasi-isometries in the nonstandard framework.

\begin{df}
Suppose that $(X,d_X)$ and $(Y,d_Y)$ are metric spaces.  For $\lambda\in \r^{\geq 1}$ and $\epsilon\in \r^{>0}$, a (not necessarily continuous) function $f:X\to Y$ is a \textbf{$(\lambda,\epsilon)$-quasi-isometric embedding} if, for all $x,x'\in X$, we have
$$\frac{1}{\lambda}d_Y(f(x),f(x'))-\epsilon \leq d_X(x,x')\leq \lambda d_Y(f(x),f(x'))+\epsilon.$$  If $f:X\to Y$ is a $(\lambda,\epsilon)$-quasi-isometric embedding, we call $f$ a \textbf{$(\lambda,\epsilon)$-quasi-isometry} if there is $C\in \r^{>0}$ such that the $C$-neighborhood of $f(X)$ equals $Y$.  We say that $f:X\to Y$ is a \textbf{quasi-isometric embedding} if it is a $(\lambda,\epsilon)$-quasi-isometric embedding for some $\lambda$ and $\epsilon$.  Similarly, $f:X\to Y$ is a \textbf{quasi-isometry} if it is a $(\lambda,\epsilon)$-quasi-isometry for some $\lambda$ and $\epsilon$.  It can be shown that if $f:X\to Y$ is a quasi-isometry, then there is a \textbf{quasi-inverse for $f$}, which is a quasi-isometry $g:Y\to X$ for which there is $K\in \r^{>0}$ such that, for all $x\in X$ and $y\in Y$, we have $d_X(g(f(x)),x))\leq K$ and $d_Y(f(g(y)),y)\leq K$. 
\end{df}

\begin{lemma}\label{L:qinonst}
Suppose $X$ and $Y$ are proper geodesic spaces and $f:X\to Y$ is a quasi-isometric embedding.  Then:
\begin{enumerate}
\item For all $x,x'\in X^*$, $d(x,x')\in \r_{\f}$ if and only if $d(f(x),f(x'))\in \r_{\f}$.
\item If $x,x'\in X_{\inf}$ are such that $x\propto x'$, then $f(x)\propto f(x')$.  Moreover, if $f$ is a quasi-isometry, then for all $x,x'\in X_{\inf}$, $x\propto x'$ if and only if $f(x)\propto f(x')$.
\end{enumerate}
\end{lemma}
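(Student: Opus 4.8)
\textbf{Plan for Lemma \ref{L:qinonst}.}

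The plan is to exploit the nonstandard characterizations already established, reducing everything to the defining inequality of a quasi-isometric embedding together with the ``discrete'' formulation of $\propto$ from Lemma \ref{L:asympgeo}. For part (1), fix $x,x'\in X^*$. By transfer of the quasi-isometry inequality, for the fixed $\lambda,\epsilon$ (which are standard reals) we have
$$\tfrac{1}{\lambda}d(f(x),f(x'))-\epsilon \leq d(x,x')\leq \lambda d(f(x),f(x'))+\epsilon.$$
Since $\lambda\geq 1$ and $\epsilon$ are finite, the left inequality shows $d(x,x')$ finite forces $d(f(x),f(x'))$ finite, and the right inequality gives the reverse implication; this is a routine manipulation using that finiteness is preserved under addition and scaling by finite reals.

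For the first assertion of part (2), suppose $x\propto x'$. By Lemma \ref{L:asympgeo}(3), there is a hyperfinite sequence $a_0,\ldots,a_\nu$ in $X_{\inf}$ with $a_0=x$, $a_\nu=x'$, and $d(a_i,a_{i+1})\in \r_{\f}$ for each $i<\nu$. The natural move is to push this forward: consider $b_i:=f(a_i)$, forming an internal hyperfinite sequence with $b_0=f(x)$ and $b_\nu=f(x')$. By part (1), $d(b_i,b_{i+1})\in \r_{\f}$ for each $i<\nu$. Two points need care. First, I must check that each $b_i\in Y_{\inf}$, so that Lemma \ref{L:asympgeo}(3) applies in $Y$; this should follow because $a_i\in X_{\inf}$ together with Lemma 2.3 (the $f$-image of the infinite part lands in the infinite part for a proper map, and a quasi-isometric embedding between proper geodesic spaces is proper), or more directly by combining part (1) with the fact that $d(x_0,a_i)$ is infinite. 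Second, I must ensure the hyperfinite index set and the internal-ness of the sequence $(b_i)$ survive; since $f^*$ is an internal function and $(a_i)$ is internal, $(b_i)=(f(a_i))$ is internal by transfer. Applying Lemma \ref{L:asympgeo}(3) in the proper geodesic space $Y$ then yields $f(x)\propto f(x')$.

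For the ``moreover'' clause, assume $f$ is a quasi-isometry and let $g:Y\to X$ be a quasi-inverse, with constant $K\in\r^{>0}$ so that $d(g(f(z)),z)\leq K$ for all $z\in X$ (and the symmetric bound). Since $g$ is itself a quasi-isometric embedding, the forward direction just proved gives: if $f(x)\propto f(x')$ then $g(f(x))\propto g(f(x'))$. The remaining step is to absorb the bounded error: because $d(x,g(f(x)))\leq K$ is finite, an internal geodesic segment connects $x$ to $g(f(x))$ staying in $X_{\inf}$ (as in the $(3)\Rightarrow(1)$ argument of Lemma \ref{L:asympgeo}), so $x\propto g(f(x))$, and likewise $x'\propto g(f(x'))$. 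Transitivity of $\propto$ on $X_{\inf}$ (Remark \ref{L:asymprmks}) then chains these to give $x\propto x'$, completing the equivalence. The main obstacle I anticipate is the bookkeeping in the forward direction of (2): verifying that the pushed-forward hyperfinite sequence genuinely lies in $Y_{\inf}$ and is internal, rather than the algebra, which is immediate from part (1).
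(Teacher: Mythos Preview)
Your proposal is correct and follows essentially the same route as the paper: part (1) is immediate from the transferred quasi-isometry inequality, and part (2) pushes a hyperfinite sequence from Lemma \ref{L:asympgeo}(3) forward through $f$, then handles the converse via a quasi-inverse $g$ and the finite-distance relation $d(x,g(f(x)))\leq K$ to chain $x\propto g(f(x))\propto g(f(x'))\propto x'$. The only difference is that you make explicit the internality of $(f(a_i))$ and the verification that $f(a_i)\in Y_{\inf}$ (which indeed follows from part (1) applied to $x_0$ and $a_i$), whereas the paper absorbs both points into the phrase ``by (1).''
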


\begin{proof}
$(1)$ follows immediately from the definition of a quasi-isometric embedding.  
For (2), fix $x,x'\in X_{\inf}$ such that $x\propto x'$.  By Lemma \ref{L:asympgeo}, there is a hyperfinite sequence $a_0,\ldots,a_\nu$ from $X_{\inf}$ such that $a_0=x$, $a_\nu=y$, and $d(a_i,a_{i+1})\in \r_{\f}$ for all $i<\nu$.  Then by (1), $f(a_0),\ldots,f(a_\nu)$ is a hyperfinite sequence from $Y_{\inf}$ such that $f(a_0)=f(x)$, $f(a_\nu)=f(x')$, and $d(f(a_i),f(a_{i+1}))\in \r_{\f}$ for all $i<\nu$, whence $f(x)\propto f(x')$ by Lemma \ref{L:asympgeo}.  Now suppose that $f$ is a quasi-isometry and $f(x)\propto f(x')$.  Let $g:Y\to X$ be a quasi-inverse for $f$.  By the first part of (2), we have $g(f(x))\propto g(f(x'))$.  Since $g(f(x))$ and $x$ are within a finite distance from each other, we have $g(f(x))\propto x$; likewise $g(f(x'))\propto x'$, whence we have $x\propto x'$.
\end{proof}

\noindent We now have the following well-known standard corollary.

\begin{cor}\label{L:qistand}
Suppose $X$ and $Y$ are proper geodesic spaces and $f:X\to Y$ is a quasi-isometric embedding.  Then for every $n$, there is $m$ such that for all $x,x'\in X$, if $x\propto_m x'$, then $f(x)\propto_n f(x')$.
\end{cor}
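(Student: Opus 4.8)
The plan is to prove the contrapositive by a compactness/overflow argument, transferring the nonstandard statement of Lemma~\ref{L:qinonst}(2) into the required standard uniform statement. The corollary says that for each fixed $n$ there is an $m$ (depending only on $n$, $\lambda$, $\epsilon$) such that $x\propto_m x'$ forces $f(x)\propto_n f(x')$ uniformly in $x,x'\in X$. Since this is a $\forall\exists$ statement whose inner matrix is expressible by an internal formula, the natural strategy is to assume it fails and extract nonstandard witnesses that contradict Lemma~\ref{L:qinonst}(2).

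First I would fix $n$ and suppose, toward a contradiction, that no such $m$ exists. Negating the statement, for every $m$ there exist $x_m,x_m'\in X$ with $x_m\propto_m x_m'$ but $f(x_m)\not\propto_n f(x_m')$. I would then consider, for each $m\in\n$, the internal (indeed standard) set
$$
B_m:=\{(x,x')\in X^*\times X^* \ | \ x\propto_m x' \text{ and } f(x)\not\propto_n f(x')\}.
$$
By hypothesis each $B_m$ is nonempty, and the family $\{B_m \mid m\in\n\}$ has the finite intersection property since $\propto_m$ implies $\propto_{m'}$ for $m'\le m$ (paths avoiding a larger ball avoid a smaller one), so $B_m\subseteq B_{m'}$ when $m\ge m'$. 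By $\aleph_1$-saturation there is a pair $(\xi,\xi')\in\bigcap_{m\in\n} B_m$. For this pair we have $\xi\propto_m \xi'$ for every standard $m$, which forces $\xi,\xi'\in X_{\inf}$ and, since for each standard $m$ the path witnessing $\xi\propto_m\xi'$ avoids $B(x_0,m)$, that $\xi\propto\xi'$ by the remark relating $\propto_\nu$ to $\propto$ (the witnessing path lies entirely in $X_{\inf}$). At the same time $f(\xi)\not\propto_n f(\xi')$, and a fortiori $f(\xi)\not\propto f(\xi')$, since any internal path through $X_{\inf}$ in particular avoids the standard ball $B(x_0,n)$.

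Now Lemma~\ref{L:qinonst}(2) applies directly: $\xi,\xi'\in X_{\inf}$ with $\xi\propto\xi'$ yields $f(\xi)\propto f(\xi')$, contradicting $f(\xi)\not\propto f(\xi')$. This completes the argument. I expect the main obstacle to be the bookkeeping verifying that the saturation witness genuinely satisfies $\xi\propto\xi'$ rather than merely the weaker $\xi\propto_m\xi'$ for each standard $m$ separately; concretely one must check that belonging to $\bigcap_m B_m$ produces a \emph{single} internal path avoiding arbitrarily large balls, hence lying in $X_{\inf}$. This is handled cleanly by noting that $\xi\propto_m\xi'$ for all standard $m$ gives, via underflow applied to the internal set of $\nu$ for which $\xi\propto_\nu\xi'$, some infinite $\nu$ with $\xi\propto_\nu\xi'$, and the remark then upgrades this to $\xi\propto\xi'$. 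The rest is routine transfer.
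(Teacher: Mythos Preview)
Your proof is correct and follows essentially the same route as the paper: assume failure, use saturation on the nested internal sets $B_m$ to obtain $(\xi,\xi')$ with $\xi\propto_m\xi'$ for all standard $m$ but $f(\xi)\not\propto_n f(\xi')$, then upgrade to $\xi\propto\xi'$ and invoke Lemma~\ref{L:qinonst}(2) for the contradiction. The only difference is cosmetic: the paper performs the upgrade by a second saturation on the family of internal path-sets $A_m=\{\alpha\in C([0,1],X)^*:\alpha(0)=\xi,\ \alpha(1)=\xi',\ d(x_0,\alpha(t))\ge m\ \forall t\}$, while you use overflow on the internal set $\{\nu\in\n^*:\xi\propto_\nu\xi'\}$; both are valid and yield the same conclusion.

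One slip: in your final paragraph you write ``underflow'' where you mean \emph{overflow} (the internal set contains all of $\n$, hence some $\nu\in\n^*\setminus\n$). Also, your first pass at justifying $\xi\propto\xi'$ (``the witnessing path lies entirely in $X_{\inf}$'') is not quite right as stated, since a priori you only have a separate path for each standard $m$; but you correctly identify and repair this gap in the last paragraph via the overflow argument, so the proof stands.
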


\begin{proof}
Suppose, towards a contradiction, that there is $n$ such that for every $m$, there are $x_m,x_m'\in X$ such that $x_m\propto_m x_m'$ and $f(x_m)\not\propto_nf(x_m')$.  Then by saturation, there is $x,x'\in X^*$ such that $x\propto_m x'$ for all $m$ and yet $f(x)\not\propto_n f(x')$.  Set $$A_m:=\{\alpha\in C([0,1])^* \ | \ \alpha(0)=x, \alpha(1)=x', \forall t\in [0,1]^* (d(x_0, \alpha(t))\geq m)\}.$$  By choice of $x$ and $x'$, $A_m$ has the finite intersection property, so by saturation, there is $\alpha \in \bigcap_m A_m$.  Then $\alpha$ witnesses that $x\propto x'$.  However, $f(x)\not\propto f(x')$, contradicting the previous lemma.   
\end{proof}

\noindent The composition of two quasi-isometries is once again a quasi-isometry.  In order to form the quasi-isometry group of $X$, we first need to identify two quasi-isometries which are a bounded distance away from each other.  More precisely, for two functions $f,g:X\to X$, say that $f$ and $g$ are \emph{equivalent} if there is $R\in \r$ such that $d(f(x),g(x))\leq R$ for all $x\in X$.  Then $\operatorname{QI}(X)$, the \emph{quasi-isometry group of $X$}, will denote the set of equivalence classes of quasi-isometries of $X$ equipped with the operation induced by composition of quasi-isometries.

\begin{lemma}\label{L:qicont}
Suppose that $X$ and $Y$ are proper geodesic spaces.  Then every quasi-isometry $f:X\to Y$ induces a homeomorphism $f_e:\ipc(X)\to \ipc(Y)$.  The map $$f\mapsto f_e:\operatorname{QI}(X)\to \operatorname{Homeo}(\ipc(X))$$ is a group homomorphism.
\end{lemma}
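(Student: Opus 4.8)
The plan is to build the map $f_e$ directly from the nonstandard machinery of Lemma~\ref{L:qinonst} and the homeomorphism $\Theta$, rather than chasing through the neighborhood basis by hand. First I would define $f_e$ on infinite path components by $f_e([x]):=[f(x)]$ for $x\in X_{\inf}$. The first thing to check is that this is \emph{well-defined}: if $[x]=[x']$, i.e.\ $x\propto x'$, then by Lemma~\ref{L:qinonst}(2) we have $f(x)\propto f(x')$, so $[f(x)]=[f(x')]$. (One must also note that $f(x)\in Y_{\inf}$ when $x\in X_{\inf}$, which follows from the quasi-isometric embedding inequalities: $d(y_0,f(x))$ is infinite whenever $d(x_0,x)$ is.) Next, since $f$ is a quasi-isometry it has a quasi-inverse $g$, and the ``moreover'' clause of Lemma~\ref{L:qinonst}(2) gives the full biconditional $x\propto x'\iff f(x)\propto f(x')$, so $f_e$ is injective; that same clause applied to $g$ shows $g_e$ is a two-sided inverse of $f_e$ up to the equivalence $g(f(x))\propto x$ (valid because $g(f(x))$ and $x$ are a finite distance apart), hence $f_e$ is a bijection.

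The second step is continuity, which is where I expect the real work to be, though Corollary~\ref{L:qistand} is tailor-made for it. Using the concrete neighborhood basis $V_n([y])=\{[y']\mid y'\propto_n y\}$ for $\ipc(Y)$, I would fix $[x]\in\ipc(X)$ and $n$, and seek $m$ with $f_e(V_m([x]))\subseteq V_n([f(x)])$. Concretely, if $[x']\in V_m([x])$ then $x'\propto_m x$, and Corollary~\ref{L:qistand} (applied to $f$ with the roles of $m,n$ suitably chosen) yields $f(x')\propto_n f(x)$, i.e.\ $[f(x')]\in V_n([f(x)])$. This gives continuity of $f_e$; applying the same argument to the quasi-inverse $g$ gives continuity of $g_e=(f_e)^{-1}$, so $f_e$ is a homeomorphism.

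The third step is functoriality. The composite $f\circ f'$ of quasi-isometries is a quasi-isometry, and at the level of representatives $(f\circ f')_e([x])=[f(f'(x))]=f_e(f'_e([x]))$, so $(f\circ f')_e=f_e\circ f'_e$ on the nose. It then remains to verify that the assignment descends to $\operatorname{QI}(X)$: if $f$ and $f'$ are equivalent, meaning $d(f(x),f'(x))\le R$ for all $x$ (hence for all $x\in X^*$ by transfer), then $f(x)$ and $f'(x)$ are a finite distance apart for every $x\in X_{\inf}$, so $f(x)\propto f'(x)$ and thus $f_e=f'_e$. This shows the map $f\mapsto f_e$ is well-defined on equivalence classes and is a homomorphism $\operatorname{QI}(X)\to\operatorname{Homeo}(\ipc(X))$.

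The main obstacle is really just the continuity step, and even there the difficulty has been front-loaded into Corollary~\ref{L:qistand}: the subtlety is getting the quantifier order right (one fixed $n$ produces an $m$ depending only on $f$ and $n$, uniformly over all pairs $x,x'$), which is exactly what that corollary supplies via saturation. The remaining arguments are routine verifications that the nonstandard relation $\propto$ is preserved and reflected, with the equivalence-class well-definedness for $\operatorname{QI}(X)$ being the only other point requiring a brief transfer argument. I would state everything in terms of $\ipc$ and then, if desired, transport along $\Theta$ to phrase the result for $\ee(X)$.
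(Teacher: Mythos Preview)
Your proposal is correct and follows essentially the same approach as the paper: define $f_e([x])=[f(x)]$ via Lemma~\ref{L:qinonst}, prove continuity from Corollary~\ref{L:qistand}, and obtain the inverse and functoriality from the quasi-inverse and composition. The only minor point is that in the continuity step you should explicitly invoke transfer when applying Corollary~\ref{L:qistand}, since the $x,x'$ in question lie in $X^*$ rather than $X$; the paper notes this, and your argument goes through once that is said.
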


\begin{proof}
By Lemma \ref{L:qinonst}, we can define $f_e([x]):=[f(x)]$.  We claim that $f_e$ is continuous.  Fix $[x]\in \ipc(X)$ and $n$.  The transfer principle applied to Corollary \ref{L:qistand} shows that there is $m$ such that $f_e(V_m([x]))\subseteq V_n([f(x)])$, from which the continuity of $f_e$ follows.  Now suppose that $g:Y\to X$ is also a quasi-isometry.  Then $$g_e(f_e([x]))=g_e([f(x)])=[(g(f(x))]=(g\circ f)_e([x]).$$  If $g$ happened to be a quasi-inverse to $f$, then the fact that $d(g(f(x)),x)\in \r_{\f}$ shows that $g(f(x))\propto x$, whence $g_e(f_e([x]))=[x]$ and $g_e$ is the inverse to $f_e$.
\end{proof}

\begin{cor}[\cite{BH}, Proposition 8.29]
Suppose that $X$ and $Y$ are proper geodesic spaces.  Then every quasi-isometry $f:X\to Y$ induces a homeomorphism $f_e:\ee(X)\to \ee(Y)$.  The map $$f\mapsto f_e:\operatorname{QI}(X)\to \operatorname{Homeo}(\ee(X))$$ is a group homomorphism.
\end{cor}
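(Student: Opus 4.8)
The plan is to transport the statement of Lemma \ref{L:qicont} across the homeomorphisms $\Theta$, so that essentially nothing new has to be proved. Write $\Theta_X \colon \ee(X) \to \ipc(X)$ and $\Theta_Y \colon \ee(Y) \to \ipc(Y)$ for the two instances of the map $\Theta$ defined just before Lemma \ref{L:IPC}. By Lemma \ref{L:IPC} each $\Theta$ is a bijection, and since $\ipc$ was by convention given the unique topology making $\Theta$ a homeomorphism, each $\Theta$ is in fact a homeomorphism. Lemma \ref{L:qicont} provides a homeomorphism $\ipc(X)\to\ipc(Y)$, which I will denote $f_e^{\ipc}$ (it is called $f_e$ there) and which is given by $[x]\mapsto[f(x)]$.

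I would then simply \emph{define} the induced map on ends by
$$f_e := \Theta_Y^{-1}\circ f_e^{\ipc}\circ\Theta_X,$$
which is a homeomorphism $\ee(X)\to\ee(Y)$ as a composition of three homeomorphisms. Unwinding this, and fixing any $\sigma\in\r^{>0}_{\inf}$, the formula reads $f_e(\e(r))=\Theta_Y^{-1}\bigl([f(r(\sigma))]\bigr)$; that is, $f_e(\e(r))$ is the unique end of $Y$ carried by $\Theta_Y$ to the infinite path component of $f(r(\sigma))$, which exists and is unique by Lemma \ref{L:IPC} and Corollary \ref{T:geosurj}. Because $\Theta$ is canonical (independent of the choice of $\sigma$, by Lemma \ref{L:nonstend}), there is no ambiguity in calling $f_e$ \emph{the} map induced by $f$ on ends.

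For the group-homomorphism assertion, take $X=Y$ and let $f,g\in\operatorname{QI}(X)$, so $\Theta_X=\Theta_Y$. Using the defining formula together with the homomorphism property $g_e^{\ipc}\circ f_e^{\ipc}=(g\circ f)_e^{\ipc}$ already established in Lemma \ref{L:qicont}, the two inner copies of $\Theta_X$ cancel:
$$g_e\circ f_e=\bigl(\Theta_X^{-1}\circ g_e^{\ipc}\circ\Theta_X\bigr)\circ\bigl(\Theta_X^{-1}\circ f_e^{\ipc}\circ\Theta_X\bigr)=\Theta_X^{-1}\circ\bigl(g_e^{\ipc}\circ f_e^{\ipc}\bigr)\circ\Theta_X=\Theta_X^{-1}\circ(g\circ f)_e^{\ipc}\circ\Theta_X=(g\circ f)_e.$$
Moreover, well-definedness on $\operatorname{QI}(X)$, namely that equivalent quasi-isometries induce the same map, is inherited verbatim from the corresponding statement for $\ipc(X)$ in Lemma \ref{L:qicont}, since $f_e$ is determined by $f_e^{\ipc}$.

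Since every step is a formal consequence of Lemmas \ref{L:IPC} and \ref{L:qicont}, I do not expect any genuine obstacle. The only point requiring a moment's care is the bookkeeping of the two instances $\Theta_X,\Theta_Y$ of $\Theta$ and confirming that the transported map genuinely deserves to be called the induced map on ends; this is immediate from the canonicity of $\Theta$ noted above.
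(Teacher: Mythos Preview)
Your proposal is correct and is essentially the paper's own proof, which simply says the corollary is ``immediate from the previous lemma and Lemma \ref{L:IPC}'' and notes that $f_e(\e(r))$ corresponds to $[f(r(\sigma))]$; you have merely written out the transport along $\Theta$ explicitly.
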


\begin{proof}
This is immediate from the previous lemma and Lemma \ref{L:IPC}.  The constructions involved show that $f_e(\e(r))$ is the end associated to $f(r(\sigma))$ for any $\sigma \in \r^{>0}_{\inf}$. 
\end{proof}

\section{Application to Cayley Graphs of Finitely Generated Groups}\label{Cayley}

\noindent In this section, we specialize to the case that $X$ is the metric space associated to the Cayley graph of a finitely generated group.  We first consider the more general context of a locally finite combinatorial graph.

Suppose that $(\V,\E)$ is a \emph{locally finite} combinatorial graph, that is a combinatorial graph for which every vertex has only finitely many edges emanating from it.  We can turn $(\V,\E)$ into a metric space $X:=X(\V,\E)$ by identifying each edge with an isometric copy of the interval $[0,1]$ and then declaring, for $x,y\in X$, $d(x,y)$ to be the infimum of the lengths of paths from $x$ to $y$; see \cite{BH} for more details.  In this way, $X$ becomes a proper geodesic space.  Let us fix a basepoint $x_0$ of $X$, which we assume to be an element of $\V$.  Let us agree to write $\V_{\f}$ for $X_{\f}\cap \V^*$ and $\V_{\inf}$ for $\V^*\setminus \V_{\f}$.  Since $\V\cap B(x_0,n)$ is finite for any $n$, we see that $\V_{\f}=\V$ (whence $\V_{\inf}=\V^*\setminus \V$).  Also, by the Transfer Principle, for every $x\in X^*$, there is $v\in \V^*$ with $d(x,v)\leq 1$, whence every infinite path component has a representative from $\V_{\inf}$, that is $$\operatorname{IPC}(X):=\{[v] \ | \ v\in \V_{\inf}\}.$$

\begin{lemma}\label{L:asympgraph}
For $v,v'\in \V_{\inf}$, we have $v\propto v'$ if and only if there is a hyperfinite sequence $g_0,\ldots,g_\nu$ from $V_{\inf}$ such that $g_0=v$, $g_\nu=v'$, and $(g_i,g_{i+1})\in \E^*$ for all $i<\nu$.
\end{lemma}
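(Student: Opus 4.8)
The plan is to prove this by reducing the continuous notion $\propto$ to the combinatorial notion via Lemma \ref{L:asympgeo}, exploiting the specific geometry of the metric space $X = X(\V,\E)$ arising from a locally finite graph. The key observation is that in such a space, every point lies within distance $1$ of a vertex, and moreover the only way to travel between vertices is along edges, each of which has length $1$. I would set up both directions around the characterization in Lemma \ref{L:asympgeo}(3), which says $v \propto v'$ iff there is a hyperfinite sequence in $X_{\inf}$ with consecutive distances in $\r_{\f}$.

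For the easier direction $(\Leftarrow)$, suppose such an edge-path $g_0,\ldots,g_\nu$ exists with each $(g_i, g_{i+1}) \in \E^*$. By transfer, consecutive vertices joined by an edge are at distance exactly $1$, so $d(g_i, g_{i+1}) = 1 \in \r_{\f}$ for all $i < \nu$. Since the $g_i$ lie in $\V_{\inf} \subseteq X_{\inf}$, Lemma \ref{L:asympgeo}(3)$\Rightarrow$(1) immediately yields $v \propto v'$. This direction is essentially just invoking the already-proved equivalence.

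For the harder direction $(\Rightarrow)$, suppose $v \propto v'$. By Lemma \ref{L:asympgeo} (applying condition (2) with, say, $\epsilon = \tfrac{1}{2}$, or condition (3)), there is a hyperfinite sequence $a_0, \ldots, a_\mu$ in $X_{\inf}$ with $a_0 = v$, $a_\mu = v'$, and consecutive distances finite (even infinitesimally small if we use (2)). The task is to replace this sequence of arbitrary points of $X^*$ by a genuine \emph{edge-path} through vertices. The idea is: by transfer of the standard fact that every point of $X$ is within distance $1$ of some vertex, internally choose for each $i$ a vertex $b_i \in \V^*$ with $d(a_i, b_i) \leq 1$, taking $b_0 = v$ and $b_\mu = v'$ (possible since these are already vertices). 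Each $b_i$ lies in $\V_{\inf}$ since $d(a_i, b_i)$ is finite and $a_i \in X_{\inf}$. Consecutive $b_i, b_{i+1}$ satisfy $d(b_i, b_{i+1}) \leq d(b_i, a_i) + d(a_i, a_{i+1}) + d(a_{i+1}, b_{i+1}) \in \r_{\f}$, so this is a finite-distance vertex sequence. Finally, I must convert each finite-distance hop $b_i \to b_{i+1}$ into an actual chain of edges: by transfer of the standard fact that in a locally finite graph two vertices at distance $r$ are connected by an edge-path of length $r$ (the metric on $X$ restricted to $\V$ is the graph metric, which takes integer values), each such hop expands into a hyperfinite edge-path through vertices, and since $d(b_i, b_{i+1}) \in \r_{\f}$ every intermediate vertex stays in $\V_{\inf}$. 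Concatenating all these edge-paths gives a single hyperfinite edge-sequence from $v$ to $v'$ in $\V_{\inf}$.

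The main obstacle I anticipate is the bookkeeping in this last concatenation step: one must argue carefully, via transfer of the appropriate standard statement about locally finite graphs (namely that the restriction of $d$ to $\V \times \V$ is the integer-valued path metric and that vertices at finite distance are joined by an edge-path realizing that distance), that the total concatenated sequence is genuinely \emph{hyperfinite} and that \emph{all} intermediate vertices remain in $\V_{\inf}$ rather than drifting into $\V$. The latter is guaranteed because each individual hop has finite length, so no intermediate vertex can be within finite distance of $x_0$ while the endpoints are infinitely far away; making this fully rigorous amounts to one more transfer argument of a clean standard graph-theoretic fact. Everything else is routine invocation of Lemma \ref{L:asympgeo} and the structural facts about $X(\V,\E)$ already recorded in the text.
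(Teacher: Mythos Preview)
Your proposal is correct and takes a genuinely different route from the paper's proof. For the forward direction, the paper starts from the $\epsilon=\tfrac12$ sequence of Lemma~\ref{L:asympgeo}(2) and then performs a rather intricate three-stage internal recursion: first it inserts, between any two consecutive $a_i,a_{i+1}$ lying on distinct edges, the unique vertex separating them (using $\{0,1\}$-labels to track original versus inserted points), then it extracts the vertices from the resulting sequence, and finally it deletes repetitions. Your approach is cleaner and more modular: you round each $a_i$ to a nearby vertex $b_i$ (via transfer of a standard choice function $X\to\V$), observe that consecutive $b_i$'s are a finite graph-distance apart, and then fill in each hop by a geodesic edge-path. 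The paper's method avoids having to choose, internally and uniformly in $i$, an edge-path between each pair $b_i,b_{i+1}$, because the $\epsilon=\tfrac12$ condition forces at most one vertex between consecutive $a_i$'s; it pays for this with the elaborate bookkeeping. Your method trades that bookkeeping for a single application of internal choice, which you should make explicit: transfer the standard statement ``for every finite sequence $(b_0,\ldots,b_m)$ of vertices there is a concatenated geodesic edge-path through them'' so that the final sequence is one internal hyperfinite object rather than an external union of separately chosen paths. With that said, everything you wrote goes through.
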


\begin{proof}
The backward direction is immediate from the direction $(3)\Rightarrow (1)$ of Lemma \ref{L:asympgeo}.  For the proof of the forward direction, suppose $v\propto v'$.  By $(1)\Rightarrow (2)$ of Lemma \ref{L:asympgeo}, we have a hyperfinite sequence $a_0,\ldots,a_\eta$ in $X_{\inf}$ such that $a_0=v$, $a_\eta=v'$, and $d(a_i,a_{i+1})<\frac{1}{2}$ for all $i<\eta$.  Now define the internal set $R\subseteq \n^*\times \V^*$ by $(i,x)\in R$ if and only if $a_i$ and $a_{i+1}$ lie on the interiors of distinct edges (so in particular, $a_i,a_{i+1}\notin \V^*$) and $x$ is the unique vertex lying in between $a_i$ and $a_{i+1}$.  Let $\pi_1:\n^*\times \V^*\to \n^*$ and $\pi_2:\n^*\times \V^*\to \V^*$ denote the projections onto $\n^*$ and $\V^*$ respectively.  Note that $\pi_2(R)\subseteq \V_{\inf}$.  For $j\in \pi_1(R)$, let $b_j\in \V^*$ be such that $(j,b_j)\in R$.  Let $\eta':=\eta+|\pi_1(R)|$ and define a hyperfinite sequence $c_0,\ldots,c_{\eta'}$ from $X^*\times \{0,1\}$ by internal recursion as follows.  Let $c_0=(v,0)$.  Suppose that $i>0$ and that $c_{i-1}$ has been defined.  Then define $c_i$ by
\[
	c_i=
	\begin{cases}
	(a_{j+1},0)	&\text{if $c_{i-1}=(a_j,0)$ and $j\notin \pi_1(R)$}\\
	(b_j,1)		&\text{if $c_{i-1}=(a_j,0)$ and $j\in \pi_2(R)$}\\
	(a_{j+1},0)	&\text{if $c_{i-1}=(b_j,1)$}.
	\end{cases}  
\]
The idea here is to insert vertices into the original sequence which lie in between consecutive elements of the original sequence.  We use $0$ and $1$ as labels to distinguish original members of the sequence from newly added members of the sequence.  Let $\pi:X^*\times \{0,1\}\to X^*$ be the projection map.  Let $$\eta'':=|\pi(X^*\times \{0,1\})\cap \V^*|.$$  Define the hyperfinite sequence $d_0,\ldots,d_{\eta''}$ from $\V^*$ by internal recursion as follows.  Let $d_0=v=\pi(c_0)$.  Now suppose that $i>0$ and $d_{i-1}=\pi(c_j)$.  Then define $d_i=\pi(c_k)$, where $k\in \{1,\ldots,\eta'\}$ is minimal satisfying the requirements that $k>j$ and $\pi(c_k)\in \V^*$.  Note that successive $d_i$'s are either equal or a distance $1$ apart.  Let $\nu:=|\{d_i \ | \ i\leq \eta''\}|$ and define the hyperfinite sequence $g_0,\ldots,g_\nu$ by internal recursion as follows.  Let $g_0=v=d_0$.  Now suppose that $i>0$ and $g_{i-1}=d_j$.  Then define $g_i=d_k$ where $k\in \{1,\ldots,\eta''\}$ is minimal satisfying $k>j$ and $d_k\not=d_j$.  This sequence is as desired.
\end{proof}

\noindent Fix a group $G$, which by the convention established in the introduction is assumed to be finitely generated and infinite.  Fix a finite generating set $S$ for $G$.  We let $\operatorname{Cay}(G,S)$, the \textbf{Cayley graph of $G$ with respect to the generating set $S$}, be the locally finite graph with $\V=G$ and edge relation given by $(g,h)\in \E$ if and only if there is $s\in S^{\pm 1}$ such that $h=gs$.  (Here, $S^{\pm 1}=S\cup S^{-1}$, where $S^{-1}:=\{s^{-1} \ | \ s\in S\}$.)  We let $X$ denote the metric space associated to $\operatorname{Cay}(G,S)$.  We take $x_0=1$ as our basepoint in $X$.  If $S'$ is also a finite generating set for $G$ and $X'$ is the metric space associated to $\operatorname{Cay}(G,S')$, then $X'$ is quasi-isometric to $X$ (see \cite{BH}, Chapter I.8, Example 8.17(3)), whence $\ee(X)$ and $\ee(X')$ are homeomorphic by Lemma \ref{L:qicont}.  Hence, defining $\ee(G):=\ee(X)$ gives us a space which is uniquely determined up to homeomorphism.

For $g\in G$, let $|g|:=d(1,g)$.  We have $G_{\f}=G=\{g\in G^* \ | \ |g|\in \r_{\f}\}$, $G_{\inf}=G^*\setminus G$, and $\operatorname{IPC}(X)=\{[g] \ | \ g\in G_{\inf}\}.$

The following group-theoretic interpretation of when $g\propto g'$ follows immediately from Lemma \ref{L:asympgraph}.

\begin{lemma}\label{L:asympcay}
For $g,g'\in G_{\inf}$, we have $g\propto g'$ if and only if there is a hyperfinite sequence $s_0,\ldots,s_\eta \in S^{\pm 1}$ such that $gs_0\cdots s_\eta=g'$ and $gs_0\cdots s_i\in G_{\inf}$ for all $i\in \{1,\ldots,\eta\}$.
\end{lemma}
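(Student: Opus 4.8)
The plan is to read off the statement from Lemma~\ref{L:asympgraph} by translating the edge relation of $\operatorname{Cay}(G,S)$ into right multiplication by a generator. Since $S^{\pm 1}$ is a standard finite set, transfer gives $(S^{\pm 1})^* = S^{\pm 1}$, so for $g,h\in G^*$ we have $(g,h)\in \E^*$ if and only if $h = gs$ for some $s\in S^{\pm 1}$; recall also that $\V_{\inf}=G_{\inf}$ in this setting.

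For the forward direction, suppose $g\propto g'$. Applying Lemma~\ref{L:asympgraph} produces a hyperfinite sequence $g_0,\ldots,g_\nu$ in $G_{\inf}$ with $g_0=g$, $g_\nu=g'$, and $(g_i,g_{i+1})\in \E^*$ for each $i<\nu$. For each such $i$ the translated edge relation guarantees a generator $s$ with $g_{i+1}=g_i s$, and by internal choice over the hyperfinite index set I would fix an internal selection $i\mapsto s_i\in S^{\pm1}$. Setting $\eta:=\nu-1$, the partial products $g s_0\cdots s_i$ are exactly the terms $g_{i+1}$, which all lie in $G_{\inf}$, and $g s_0\cdots s_\eta=g_\nu=g'$, as required.

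For the converse, given $s_0,\ldots,s_\eta\in S^{\pm1}$ as in the statement, I would set $g_i:=g s_0\cdots s_{i-1}$ for $0\le i\le \eta+1$, so that $g_0=g$, $g_{\eta+1}=g'$, and consecutive terms are adjacent by construction. The hypothesis places $g_2,\ldots,g_{\eta+1}$ in $G_{\inf}$ directly, and $g_0=g\in G_{\inf}$ by assumption; the only point needing a word is $g_1=g s_0$, which the hypothesis does not explicitly mention, but since $g_1$ is adjacent to $g\in G_{\inf}$ we have $|g_1|\ge |g|-1$, forcing $|g_1|$ to be infinite and hence $g_1\in G_{\inf}$. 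Thus the whole sequence lies in $G_{\inf}$, and Lemma~\ref{L:asympgraph} yields $g\propto g'$.

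The argument is essentially bookkeeping, in line with the claim that the lemma ``follows immediately.'' The only genuinely nonroutine steps are the internal choice of the generators $s_i$ across the hyperfinite index set and the small indexing gap in the converse, which the adjacency estimate $|g_1|\ge|g|-1$ patches; I expect this last adjacency observation to be the subtlest point.
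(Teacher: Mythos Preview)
Your proof is correct and follows exactly the route the paper intends: Lemma~\ref{L:asympcay} is nothing more than Lemma~\ref{L:asympgraph} specialized to the Cayley graph, with the edge relation rewritten as right multiplication by a generator. Your two added observations---that the choice $s_i:=g_i^{-1}g_{i+1}$ is internally definable (in fact uniquely determined, so no genuine choice principle is needed), and that the case $i=0$ of the partial products is handled by the adjacency estimate $|gs_0|\ge |g|-1$---are both sound and fill in exactly the details the paper suppresses with ``follows immediately.''
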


\noindent The action of $G$ on itself by left multiplication extends to an isometry of $X$ (as it preserves the relation $\mathcal{E}$), whence Lemma \ref{L:qicont} yields a group morphism $$g\mapsto ([x]\mapsto [gx]):G\to \operatorname{Homeo}(\ipc(X)).$$  Let $H$ be the kernel of this group morphism, so for $h\in H$ and $x\in G_{\inf}$, we have $hx\propto x$.  We will call $H$ the \textbf{end stabilizer} of $G$.  By Lemma \ref{L:qinonst}, $H$ is independent of the choice of $S$.  Under the identification between $\ipc(X)$ and $\ee(X)$, this morphism becomes $$g\mapsto (\e(r)\mapsto \e(g\cdot r)):G\to \operatorname{Homeo}(\ee(X)).$$  Then for $h\in H$ and $\e(r)\in \ee(X)$, we have $\e(h\cdot r)=\e(r)$.

\

We now use everything that we have developed thus far to give a nice nonstandard proof of one of the fundamental theorems of the subject.

\begin{thm}[Hopf, \cite{Hopf}]\label{T:numberofends}
Suppose that $G$ has finitely many ends.  Then $G$ has at most two ends.
\end{thm}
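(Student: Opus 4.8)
The plan is to exploit the left-multiplication action of $G$ on $\ipc(X)$ developed above, together with the nonstandard characterization of ends via infinite path components. The classical statement that a group has $0$, $1$, $2$, or infinitely many ends rests on the observation that $G$ acts on its own end space, and a finite set acted on by an infinite group in a suitably transitive-or-trivial way must be very small. Concretely, suppose for contradiction that $G$ has a finite number $k$ of ends with $3 \leq k < \infty$. I would first record that $\ipc(X)$ is then a finite set $\{[x_1],\ldots,[x_k]\}$ on which $G$ acts by the homeomorphisms $[x]\mapsto[gx]$ from Lemma \ref{L:qicont}, giving a homomorphism $\varphi:G\to\operatorname{Sym}(\{[x_1],\ldots,[x_k]\})$ into a finite symmetric group.

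The crux is to show that this action is trivial, i.e.\ $H = \ker\varphi$ has finite index but in fact is everything, forcing $k \leq 2$. The key geometric input I would isolate is a \emph{separation} statement: if $G$ has finitely many ends, one can separate them with a ball $B(1,R)$ of \emph{finite} standard radius (this is the standard counterpart of the hyperfinite-separation lemma proved above, and for finitely many ends the radius can be taken finite). So fix $R\in\r^{>0}$ such that each end has a well-defined path component of $X\setminus B(1,R)$, and for $g\in G_{\inf}$ the class $[g]$ is detected by which of the finitely many unbounded components of $X^*\setminus B(1,R)^*$ contains $g$. Now I would argue that for any fixed standard $g\in G$ and any $x\in G_{\inf}$, the points $x$ and $gx$ lie in the same infinite path component: since $|g|$ is finite, left-translation by $g$ moves $x$ only a finite distance, and by Lemma \ref{L:asympcay} one can connect $x$ to $gx$ by a hyperfinite sequence of $S^{\pm1}$-steps staying in $G_{\inf}$ (indeed a geodesic segment from $x$ to $gx$ has finite length, hence stays infinitely far from $1$). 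Thus $gx\propto x$, so $g$ acts trivially, giving $H = G$.

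Having shown the action of $G$ on $\ipc(X)$ is trivial, I would then derive the bound on $k$. The triviality of the action is not by itself enough to force $k\leq 2$; the real leverage is that with a trivial action, one uses the \emph{geometry} of translation to produce a contradiction from $k\geq 3$. Here the classical Hopf argument proceeds by taking an element $g$ of infinite order (which exists since $G$ is infinite and finitely generated, after passing to a finite-index consideration) and examining how the cyclic group $\langle g\rangle$ permutes the ends: a single bi-infinite geodesic determined by the orbit $\{g^n\}_{n\in\z}$ accumulates on exactly two ends, and the remaining ends would have to be fixed in a way incompatible with $G$ acting cocompactly. I would realize this nonstandardly by choosing $\nu\in\n^*\setminus\n$ and comparing the infinite path components $[g^{\nu}]$ and $[g^{-\nu}]$: these give (at most) two ends, and any third end $[x]$ would be separated from both, yet translating $[x]$ by powers of $g$ must eventually land it in the component of $[g^\nu]$ or $[g^{-\nu}]$, contradicting triviality of the action.

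The hard part will be the final step: converting the triviality of the $G$-action into the concrete bound $k\leq 2$, since triviality alone is consistent with any finite $k$. The genuine content of Hopf's theorem is that a group acting on its own ends with finitely many ends cannot have three or more, and this requires using an infinite-order element together with the fact that the two ends of a bi-infinite geodesic are the \emph{only} ends its translates can limit to. I expect the cleanest nonstandard route is to fix an infinite-order $g$, consider the geodesic ray toward $[g^\nu]$ and its reverse toward $[g^{-\nu}]$, and show via Lemma \ref{L:asympcay} and the triviality of the action that every $x\in G_{\inf}$ satisfies $x\propto g^\nu$ or $x\propto g^{-\nu}$ for suitable large $\nu$; establishing this dichotomy rigorously in the hyperfinite setting — in particular ruling out a ``third'' infinite path component stranded between the two ends of the axis — is where the main technical care is needed.
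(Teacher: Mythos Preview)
Your proposal contains a genuine error in the first step, and it propagates. You claim that for any standard $g\in G$ and any $x\in G_{\inf}$, left-translation by $g$ moves $x$ only a finite distance, whence $gx\propto x$ and $H=G$. But in the Cayley graph with edge convention $(g,gs)$ we have $d(a,b)=|a^{-1}b|$, so $d(x,gx)=|x^{-1}gx|$, not $|g|$. Left multiplication is an isometry in the sense $d(ga,gb)=d(a,b)$; it does \emph{not} move individual points by a bounded amount. Thus your geodesic from $x$ to $gx$ can have infinite length and need not stay in $G_{\inf}$. If your argument were correct, every group would equal its own end stabilizer, which Example~\ref{L:free} ($\z_2*\z_2$) already refutes: left multiplication by $a$ genuinely swaps the two ends. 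So the conclusion $H=G$ is false in general, and the whole architecture of your argument --- reduce to the case of a trivial action and then analyze the axis of an infinite-order element --- collapses at the outset.

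The paper's proof uses exactly the non-triviality you overlooked. It only uses that $H$ has \emph{finite index} in $G$ (since $\ipc(X)$ is finite), not that $H=G$. From finite index one gets an element $h\in H^*\cap G_{\inf}$ lying in a third end, and by construction $hx_i\propto x_i$ for $i=1,2$ (this is what membership in the end stabilizer buys --- it is not automatic). Writing $x_i$ as a geodesic word and using $hx_i\propto x_i\not\propto h$, Lemma~\ref{L:asympcay} forces the path $hs_1\cdots s_\eta$ to enter $G$ at some infinite stage, and comparing the two resulting finite elements yields $x_1\propto x_2$. The leverage is thus the interplay between left multiplication by an \emph{infinite} element $h$ of the end stabilizer and the geodesic structure of $x_1,x_2$ --- not any claim that finite group elements act trivially.
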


\begin{proof}
Suppose that $G$ has finitely many ends but, towards a contradiction, at least 3 ends, say  $e_1,e_2,e_3$.  For $i=1,2$, let $r_i\in \G_1(X)$ be such that $\e(r_i)=e_i$.  Since $H$ has finite index in $G$, we see that there is a fixed constant $K$ such that every element of $G$ is within $K$ of an element of $H$.  Thus there is a proper ray $r:[0,\infty)\to X$ with $\e(r)=e_3$ and such that $|r(n)|\geq n$ and $r(n)\in H$ for each $n$.  We will need the following claim.

\

\noindent \textbf{Claim}:  There are $\beta, \nu \in \n^*\setminus \n$ such that $r(\beta)r_i(\nu) \in G_{\inf}$ and $r(\beta)r_i(\nu) \propto r_i(\nu)$ for $i=1,2$.

\

\noindent The reason that the claim is not trivially true by overflow is that the relation $\propto$ is \emph{external}, that is, not internal.  Fix $\gamma\in \r^{>0}_{\inf}$.  For each $n>0$, apply the transfer principle to the the fact that $$\e(r(n)\cdot r_1)=\e(r_1) \text { and }\e(r(n)\cdot r_2)=\e(r_2)$$ to obtain $\nu \in \n^*$ with $\nu>\gamma$ satisfying $$r(n)\cdot r_1(\nu)\propto_\gamma r_1(\nu) \text{ and }r(n)\cdot r_2(\nu)\propto_\gamma r_2(\nu).$$  Now we can apply overflow to obtain $\beta \in \n^*\setminus \n$ such that there is $\nu \in \n^*$ with $\nu>\gamma$ such that $r(\beta)\cdot r_1(\nu)\propto_\gamma r_1(\nu)$ and $r(\beta)\cdot r_2(\nu)\propto_\gamma r_2(\nu)$, proving the claim.

\

\noindent Fix $\beta$ and $\nu$ as in the Claim.  Let $h:=r(\beta)$ and $x_i:=r_i(\nu)$, $i=1,2$.  Note that we can write $x_1=s_1\cdots s_\nu$, where $s_\eta \in S^{\pm 1}$ and $|s_1\cdots s_\eta|=\eta$ for all $\eta\in \{1,\ldots,\nu\}$.  Likewise, $x_2=t_1\cdots t_{\nu}$, where $t_\eta\in S^{\pm 1}$ and $|t_1\cdots t_\eta|=\eta$ for all $\eta\in \{1,\ldots,\nu\}$.  Since $hx_1\propto x_1\not\propto h$, Lemma \ref{L:asympcay} implies that $hs_1\cdots s_\eta \in G$ for some $\eta<\nu$.  Likewise, $ht_1\cdots t_\zeta\in G$ for some $\zeta<\nu$.  Since $h\in G_{\inf}$, we must have $s_1\cdots s_\eta,t_1 \cdots t_\zeta \in G_{\inf}$, whence $\eta,\zeta\in \n^*\setminus\n$.  Since $s_\eta^{-1}\cdots s_1^{-1}t_1\cdots t_\zeta\in G$, it  follows that $s_1\cdots s_\eta \propto t_1\cdots t_\zeta$, and since $x_1 \propto s_1\cdots s_\eta$ and $x_2 \propto t_1\cdots t_\zeta$, we get $x_1\propto x_2$, a contradiction.

\end{proof}

\begin{rmk} There are finitely generated groups with exactly one end.  Indeed, the Cayley graph of $\z \oplus \z$ is quasi-isometric to $\r^2$, whence $\z\oplus \z$ has one end.  (We will consider a generalization of this fact in Lemma \ref{L:prodinf}.)  Note that the Cayley graph of $\z$ is quasi-isometric to $\r$, whence $\z$ has two ends.  In fact, $G$ has two ends if and only if it is \emph{virtually} $\z$, that is if and only if it has a subgroup of finite index which is isomorphic to $\z$.  The ``if'' direction of this result follows from the fact that if $G$ is a finitely generated group with finite generating set $S$ and $G'$ is a finitely generated subgroup of finite index in $G$ with generating set $S'\subseteq S$, then the natural inclusion $\operatorname{Cay}(G',S')\hookrightarrow \operatorname{Cay}(G,S)$ of Cayley graphs is a quasi-isometry.  The ``only if'' direction is due to Hopf and will be proved here in Theorem \ref{T:twoends}.
\end{rmk}

\noindent While the proof of Theorem \ref{T:numberofends} given above has the advantage of being rather elementary, we can give an even shorter proof once we establish the following general lemma about the nonstandard extension of the end stabilizer of a group.

\begin{lemma}\label{L:Hstar}
Let $W\subseteq G_{\inf}$ be internal.  Then there is $\nu\in \n^*\setminus \n$ such that $hx\in G_{\inf}$ and $hx\propto x$ for all $x\in W$ and all $h\in H^*$ with $|h|\leq \nu$.
\end{lemma}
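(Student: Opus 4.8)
The plan is to dissolve the externality of $\propto$ by replacing it with the internal relations $\propto_\sigma$, whose key feature is that $hx \propto_\sigma x$ with $\sigma$ infinite already forces $hx \propto x$. Accordingly, I would introduce the internal condition
$$\Phi(n,\sigma)\;:\quad \forall x\in W\ \forall h\in H^*\ \bigl(|h|\le n \,\to\, |hx|>\sigma \ \wedge\ hx\propto_\sigma x\bigr),$$
which is internal in $(n,\sigma)$ because $W$ and $H^*$ are internal, $|\cdot|$ is an internal function, and $\propto_\sigma$ is, uniformly in $\sigma$, an internal relation on $X^*$. The whole lemma then reduces to producing a single infinite $\nu$ with $\Phi(\nu,\nu)$: for such a $\nu$, any $h\in H^*$ with $|h|\le\nu$ and any $x\in W$ satisfy $|hx|>\nu$, so $hx\in G_{\inf}$, while $hx\propto_\nu x$ with $\nu$ infinite yields $hx\propto x$, which is exactly the conclusion.

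To locate $\nu$ by overflow I would first check that $\Phi(m,m)$ holds for every standard $m$. The crucial point is that $\{h\in H^*:|h|\le m\}=(\{h\in H:|h|\le m\})^*$ is a finite set of \emph{standard} elements, since balls in the Cayley graph meet $G$ in finite sets. Hence verifying $\Phi(m,m)$ amounts to checking, for each standard $h\in H$ and each $x\in W\subseteq G_{\inf}$, that $|hx|>m$ and $hx\propto_m x$. The first holds because $|hx|\ge|x|-|h|$ by left-invariance of the word metric, and this is infinite; the second holds because the defining property of the end stabilizer gives $hx\propto x$, and any internal path witnessing $\propto$ lies in $X_{\inf}$, hence avoids $B(x_0,m)$ and so already witnesses $hx\propto_m x$. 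As each instance holds, the internal universal statement $\Phi(m,m)$ is true for all standard $m$.

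Finally, the set $D:=\{n\in\n^*:\Phi(n,n)\}$ is internal and contains $\n$, so by overflow it contains some $\nu\in\n^*\setminus\n$, and this $\nu$ is as desired. The only genuine obstacle is the externality of $\propto$ — exactly the phenomenon flagged in the proof of Theorem \ref{T:numberofends} — and it is handled by the diagonal device $\Phi(n,n)$, which drives both the length bound $n$ and the depth $\sigma$ of the separating ball to infinity at once, together with the reduction that makes $\Phi(m,m)$ checkable through the standard bounded-length elements of $H^*$.
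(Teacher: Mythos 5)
Your proof is correct, and its core strategy is the same as the paper's: replace the external relation $\propto$ by the internal family $\propto_\sigma$, observe that for a standard length bound $m$ the set $\{h\in H^*\ |\ |h|\le m\}$ is a finite set of standard elements of $H$ (so each instance reduces to the defining property of the end stabilizer, and any path in $X_{\inf}$ automatically avoids $B(x_0,m)$), and then push to an infinite bound. The one genuine difference is how the two parameters are sent to infinity. The paper first uses saturation to produce a single infinite depth $\gamma$ lying in $\bigcap_{h\in H} B_h$, where $B_h=\{\eta\ |\ hx\propto_\eta x \text{ for all }x\in W\}$, and only then applies overflow to the length bound with $\gamma$ frozen; you instead diagonalize, overflowing the single internal set $\{n\ |\ \Phi(n,n)\}$ so that the length bound and the depth of the separating ball grow together. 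Your version dispenses with the saturation step entirely, which is a small economy (the paper's intersection is only countable anyway, since $H$ is countable), and your explicit clause $|hx|>\sigma$ handles the conclusion $hx\in G_{\inf}$ a bit more directly than the paper, which leaves it implicit in the fact that $\propto$ only relates points of $X_{\inf}$. Both arguments are sound; yours is marginally more self-contained.
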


\begin{proof}
Let $A_n:=\{\eta \in \n^* \ | \ \eta>n\}$, and for $h\in H$, let $$B_h:=\{\eta \in \n^* \ | \ hx\propto_\eta x \text{ for all }x\in W\}.$$  For each $h\in H$, we have $\n \subseteq B_h$, so the family $$\{A_n \ | \ n\in \n\} \cup \{B_h \ | \ h\in H\}$$ is a family of internal sets with the finite intersection property, so by saturation, there is $\gamma \in \bigcap_n A_n \cap \bigcap_h B_h$.  Consider the internal set 
$$C:=\{\eta \in \n^* \ | \ (\forall h\in H^*)(\forall x\in W) \  (|h|\leq \eta \to hx\propto_\gamma x)\}.$$  Since $\n\subseteq C$, there is $\nu \in \n^*\setminus \n$ with $\nu\in C$.  This $\nu$ is as desired.      
\end{proof}

Here now is a shorter proof of Theorem \ref{T:numberofends}.  Let $x_1,x_2\in G_{\inf}$ be such that $x_1\not \propto x_2$.  Fix $\nu \in\n^* \setminus \n$ such that $hx_i\propto x_i$ for $i=1,2$ and all $h\in H^*$ with $|h|\leq \nu$.  Fix $h\in H_{\inf}$ such that $|h|\leq \nu$ and such that $h\not \propto x_1$ and $h\not \propto x_2$; this is possible since there is $K\in \n$ such that every element of $G$ is within a distance of $K$ from an element of $H$, whence every element of $G^*$ is within a distance of $K$ from an element of $H^*$.  The proof now proceeds as in the final paragraph of the proof given above.  (The fact that $|x_1|=|x_2|$ was irrelevant in the proof of Theorem \ref{T:numberofends} given above.  Of course, one could take $x_1,x_2\in G_{\inf}$ such that $x_1\not\propto x_2$ and $|x_1|=|x_2|$.)

Let us mention one more application of Lemma \ref{L:asympcay}.

\begin{lemma}\label{L:prodinf}
If $G_1$ and $G_2$ are infinite, finitely generated groups, then $G_1\times G_2$ has one end.
\end{lemma}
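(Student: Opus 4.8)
The plan is to show that any two infinite elements of $(G_1 \times G_2)_{\inf}$ are connected under $\propto$, using the product structure together with the nonstandard characterization of $\propto$ from Lemma \ref{L:asympcay}. First I would fix a generating set $S = (S_1 \times \{1\}) \cup (\{1\} \times S_2)$ for $G_1 \times G_2$, where $S_i$ generates $G_i$; this is natural because a step in $S^{\pm 1}$ either moves the first coordinate (by an element of $S_1^{\pm 1}$) or the second coordinate (by an element of $S_2^{\pm 1}$), and the word length is simply the sum $|(g_1,g_2)| = |g_1| + |g_2|$. The goal then reduces to proving that $\ipc(X)$ is a single point, i.e. that $x \propto x'$ for all $x, x' \in (G_1 \times G_2)_{\inf}$.

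The key geometric idea is that in a product, one can reroute a path ``through infinity'' in the unused coordinate. More precisely, suppose $x = (a,b)$ with $(a,b) \in (G_1 \times G_2)_{\inf}$, so $|a| + |b|$ is infinite. The crucial observation is that because both $G_1$ and $G_2$ are \emph{infinite}, I can always move one coordinate out to an infinite value while holding the other fixed, staying in $(G_1 \times G_2)_{\inf}$ the entire time. Concretely, I would show that any $(a,b)$ with, say, $b$ finite (hence $a$ infinite) satisfies $(a,b) \propto (a,b')$ for any $b'$: using Lemma \ref{L:asympcay}, take a hyperfinite word in $S_2^{\pm 1}$ moving the second coordinate from $b$ to $b'$; since the first coordinate $a$ stays infinite throughout, every intermediate point $(a, \cdot)$ remains in $(G_1 \times G_2)_{\inf}$, as required. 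A symmetric argument handles moving the first coordinate while the second stays infinite.

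Assembling these moves gives the result. Given arbitrary $x = (a,b)$ and $x' = (a',b')$ in the infinite part, I would first push the second coordinate out to some fixed infinite element $c \in G_2^*$ (choosing $c$ infinite in $G_2$, which exists since $G_2$ is infinite and $G_2^* \setminus G_2 \neq \emptyset$): this shows $(a,b) \propto (a,c)$, and then, since the second coordinate $c$ is now infinite, I can freely slide the first coordinate along $S_1^{\pm 1}$ to reach $(a', c)$, giving $(a,c) \propto (a',c)$. Finally reverse the first step to slide the second coordinate from $c$ back to $b'$, yielding $(a',c) \propto (a',b')$. By transitivity of $\propto$ on $X_{\inf}$ (Remark \ref{L:asymprmks}) we conclude $x \propto x'$, so $\ipc(X)$, and hence $\ee(G_1 \times G_2)$, has exactly one element.

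The main obstacle, I expect, is being careful that all intermediate vertices of the hyperfinite connecting sequences genuinely lie in $(G_1 \times G_2)_{\inf}$ rather than slipping into the finite part; this is exactly where infiniteness of \emph{both} factors is used, and it must be verified that at each stage at least one coordinate is infinite (so the sum of the coordinate lengths is infinite). A secondary technical point is arranging the connecting words as genuine hyperfinite sequences satisfying the hypotheses of Lemma \ref{L:asympcay} — in particular that each partial product stays infinite — but given the product metric $|(g_1,g_2)| = |g_1| + |g_2|$ and the fact that a single coordinate being infinite already forces infiniteness, this should follow directly once the rerouting scheme above is set up correctly.
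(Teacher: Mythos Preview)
Your approach is essentially the paper's: use Lemma~\ref{L:asympcay} with the product generating set to show that whenever one coordinate is infinite the other may be slid arbitrarily while remaining in $(G_1\times G_2)_{\inf}$, and then chain such moves. There is, however, a small gap in your three-step assembly $(a,b)\propto(a,c)\propto(a',c)\propto(a',b')$: the first move (sliding the second coordinate while holding the first fixed) is justified by your key observation only when $a$ is infinite, and likewise the last move requires $a'$ infinite. But $(a,b)\in(G_1\times G_2)_{\inf}$ only guarantees that \emph{at least one} of $a,b$ is infinite; if $a\in G_1$ is finite (so $b$ is infinite), your first move may well pass through a finite second coordinate, dropping you out of $(G_1\times G_2)_{\inf}$. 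You flag this obstacle in your final paragraph, but the assembly as written does not yet handle it.

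The fix is the obvious one and is exactly what the paper does: split into cases according to which coordinate of each endpoint is infinite. For instance, if $a$ is finite, first use the fact that $b$ is infinite to slide the \emph{first} coordinate to a chosen infinite element of $G_1^*$; thereafter your chain applies verbatim. The paper organizes this by first treating the case ``first coordinates agree and are infinite'' (your basic move), then the mixed case $a\in(G_1)_{\inf}$, $b'\in(G_2)_{\inf}$, and finally the case $a,a'\in(G_1)_{\inf}$ via an auxiliary infinite element of $G_2^*$ --- structurally the same route you propose, just with the case analysis made explicit.
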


\begin{proof}
We must show that if $(g_1,g_2)$ and $(h_1,h_2)$ are in $(G_1\times G_2)_{\inf}$, then $(g_1,g_2)\propto (h_1,h_2)$.  First suppose that $g_1=h_1\in (G_1)_{\inf}$.  Write $g_2=h_2s_1\cdots s_\nu$, where $s_i\in S^{\pm 1}$ for each $i\leq \nu$; here $S$ denotes the generating set for $G_2$.  Then $(g_1,g_2)=(h_1,h_2)\cdot (1,s_1)\cdots (1,s_\nu)$ and each initial segment $(h_1,h_2)\cdot (1,s_1)\cdots (1,s_i)$ is certainly in $(G_1\times G_2)_{\inf}$, whence $(g_1,g_2)\propto (h_1,h_2)$ by Lemma \ref{L:asympcay}.  An identical proof treats the case that $g_2=h_2\in (G_2)_{\inf}$.  Now suppose that $g_1\in (G_1)_{\inf}$ and $h_2\in (G_2)_{\inf}$.  Then by the special cases just treated above, we have that $$(g_1,g_2)\propto (g_1,h_2) \propto (h_1,h_2).$$  Finally, suppose that $g_1,h_1\in (G_1)_{\inf}$.  Fix $x\in (G_2)_{\inf}$.  Then $$(g_1,g_2)\propto (g_1,x)\propto (h_1,x)\propto (h_1,h_2).$$
\end{proof}

\begin{rmk}
The preceding lemma actually appears in \cite{C} as a corollary of the following more general result:  If $G$ contains an infinite, finitely generated normal subgroup $H$ such that $G/H$ is infinite, then $G$ has one end.  The proof of this fact is a rather straightforward combinatorial argument, and we were unable to find a nonstandard one simpler than it.  
\end{rmk}

\noindent We end this section with a short discussion of amalgamated free products and HNN extensions.  This material will be needed in the next section.

\begin{df}

\

\begin{enumerate}
\item Suppose that $G_1$ and $G_2$ are groups with subgroups $H_1$ and $H_2$ respectively.  Further suppose that $\phi:H_1\to H_2$ is an isomorphism.  Then \textbf{the amalgamated free product of $G_1$ and $G_2$ with respect to $\phi$} is the group
$$G_1*_{\phi}G_2:=\langle G_1,G_2 \ | \ \phi(h)h^{-1}, \ h\in H_1\rangle.$$  A more common notation for this amalgamated free product is $G_1*_H G_2$, where $H$ is a group isomorphic to both $H_1$ and $H_2$.
\item Suppose that $G$ is a group, $H_1$ and $H_2$ are subgroups of $G$, and $\phi:H_1\to H_2$ is an isomorphism.  Then the \textbf{HNN extension of $G$ via $\phi$} is the group
$$G*_{\phi}:=\langle G,t \ | \ tht^{-1}\phi(h)^{-1}, h\in H_1\rangle,$$ where $t$ is an element not in $G$, called the \emph{stable letter} of $G*_{\phi}$.  A more common notation for the HNN extension of $G$ via $\phi$ is $G*_H$, where $H$ is a group isomorphic to both $H_1$ and $H_2$.  
\end{enumerate}
\end{df}

The following theorem is considered one of the most important theorems in the theory of ends of finitely generated groups.  It would be a triumph to find a simple, nonstandard proof of this theorem.

\begin{thm}[Stallings \cite{Stall}, Bergman\cite{Berg}]
$G$ has more than one end if and only if one of the following holds:
\begin{itemize}
\item $G\cong A*_C B$, where $C$ is a finite group and $A\not=C$ and $B\not=C$, or
\item $G\cong A*_C$, where $C$ is a finite subgroup of $A$.
\end{itemize}
\end{thm}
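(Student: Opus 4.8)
The plan is to treat the two directions quite differently, since one is elementary while the other is the full strength of Stallings' theorem. For the backward direction I would show that a splitting over a finite subgroup produces at least two distinct infinite path components, and this can be made essentially a one-line observation in the nonstandard framework. For the forward direction I would reduce ``more than one end'' to the existence of a nontrivial almost invariant set and then build a $G$-action on a tree with finite edge stabilizers, invoking Bass--Serre theory; the construction of the tree is the genuine obstacle.

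For the backward direction, suppose first that $G \cong A *_C B$ with $C$ finite and $A \neq C \neq B$. Let $T$ be the Bass--Serre tree of the splitting, on which $G$ acts with edge stabilizers conjugate to $C$; fix an edge $e$ with stabilizer $C$, delete the open edge $e$ to split $T$ into two half-trees $T^-$ and $T^+$, and fix a base vertex $o$. Set $E := \{ g \in G : g\cdot o \in T^+\}$. Since $A \neq C \neq B$, both $E$ and $G \setminus E$ are infinite, and $E$ is \emph{almost invariant}: its coboundary $\partial E := \{g \in E : gs \notin E \text{ for some } s \in S^{\pm 1}\}$ is finite, this finiteness being exactly a reflection of the finiteness of the edge stabilizer $C$. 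Now pass to the nonstandard extension. Because $\partial E$ is a fixed finite, hence standard, subset of $G$, we have $\partial E^* \subseteq G_{\f}$, so no vertex of $\partial E^*$ is infinite. Choose $x_1 \in E^* \cap G_{\inf}$ and $x_2 \in (G \setminus E)^* \cap G_{\inf}$, which exist since $E$ and $G\setminus E$ are infinite. By Lemma \ref{L:asympcay}, any witness to $x_1 \propto x_2$ is a hyperfinite path in the Cayley graph lying entirely in $G_{\inf}$; but such a path, beginning inside $E^*$ and ending outside it, must cross $\partial E^*$, which is impossible as $\partial E^* \subseteq G_{\f}$. Hence $x_1 \not\propto x_2$, so $G$ has at least two ends. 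The HNN case $G \cong A *_C$ is identical, using the stable letter to define the separating set.

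For the forward direction the first step is again soft: if $G$ has more than one end then, by the theory of Section~\ref{Cayley}, there are $x_1, x_2 \in G_{\inf}$ with $x_1 \not\propto x_2$, and translating this back to a standard statement (via overflow and saturation, as in Lemma \ref{L:nonstend}) yields a nontrivial almost invariant set $E$, namely one with $\partial E$ finite and both $E$ and $G \setminus E$ infinite. The heart of the matter is to upgrade a single almost invariant set into an action of $G$ on a tree with finite edge stabilizers and no global fixed point; Bass--Serre theory then delivers the desired splitting, with the edge group realized as the stabilizer of a cut and seen to be finite precisely because the associated coboundary is finite. Following Dunwoody, I would try to choose $E$ minimizing the cardinality of $\partial E$ and then argue that the $G$-translates of $E$ can be taken \emph{nested}, so that they organize into a pretree and hence a tree on which $G$ acts.

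The main obstacle is exactly this last step: producing a $G$-invariant nested family of cuts. This is the combinatorial core of Stallings' theorem, encoded in his original argument via \emph{bipolar structures} and recast in Dunwoody's later tree construction, and it is where all the genuine depth lies. The nonstandard language developed here furnishes a clean dictionary between ends, infinite path components, and almost invariant sets, and it makes the backward direction little more than the remark that a finite coboundary is standard and therefore bounded; but I do not expect it to shorten the nesting argument, whose difficulty is purely combinatorial. Finding a genuinely nonstandard route through the nestedness step is, as remarked above, precisely the triumph that remains open.
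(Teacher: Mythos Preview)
The paper does not prove this theorem at all. It is stated as a known deep result, attributed to Stallings and Bergman, and the paper explicitly remarks that ``it would be a triumph to find a simple, nonstandard proof of this theorem.'' So there is no proof in the paper to compare your proposal against; the theorem is quoted precisely because a nonstandard proof is \emph{not} available.

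That said, your proposal is a sensible assessment of where things stand. Your backward direction is essentially correct and fits nicely into the nonstandard framework of the paper: once one has an almost invariant set $E$ with finite vertex-boundary $\partial E$ and with both $E$ and $G\setminus E$ infinite, the observation that $(\partial E)^* = \partial E \subseteq G_{\f}$ immediately blocks any hyperfinite path in $G_{\inf}$ from crossing between $E^*$ and $(G\setminus E)^*$, giving two ends via Lemma~\ref{L:asympcay}. The one point you gloss over is the verification that $\partial E$ really is finite; this is not automatic from the sentence ``a reflection of the finiteness of the edge stabilizer $C$,'' and deserves the short argument that a Cayley edge $(g,gs)$ lies in $\delta E$ only when $g^{-1}e$ lies on the fixed finite geodesic $[o, s\cdot o]$ in $T$, which forces $g$ into one of finitely many cosets of the finite group $C$. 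One could also bypass the Bass--Serre tree entirely and argue directly from normal forms, in the spirit of how the paper handles the amalgamated and HNN cases in the proof of Proposition~\ref{L:multends}.

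For the forward direction you are, in effect, agreeing with the paper: you correctly isolate the construction of a nested family of cuts (equivalently, a $G$-tree with finite edge stabilizers) as the genuine obstacle, and you do not claim that the nonstandard dictionary shortens it. This is honest, and it matches the paper's own verdict that a nonstandard proof of Stallings' theorem remains open. Your reduction from ``more than one end'' to the existence of a nontrivial almost invariant set is exactly Lemma~\ref{L:invariant1} of the paper, so that first step is already in hand; everything after that is the classical Stallings--Dunwoody argument, untouched by the methods here.
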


We end with the Reduced Form Theorems for amalgamated free products (see \cite{Mag}, Theorem 4.1) and HNN extensions (see \cite{Brit} and \cite{Cohen}, Theorem 32).  The reduced form theorem for HNN extensions is also referred to as \emph{Britton's Lemma}.

\begin{fact}

\

\begin{enumerate}
\item Suppose that $C$ is a common subgroup of the groups $A$ and $B$.  Then every element $g\in A*_C B$ can be written in a \emph{reduced form}
$$g=cg_1\cdots g_n,$$ where $c\in C$, $g_1,\ldots,g_n\in (A\cup B)\setminus C$, and for all $i\in \{1,\ldots, n-1\}$, $g_ig_{i+1}\notin A\cup B$.  Moreover, the number $n$ is uniquely determined by $g$ and is called the \emph{length of $g$}, denoted $\ell(g)$.
\item Suppose that $A$ is a group and $\phi:C_1\to C_2$ is an isomorphism between two subgroups of $A$.  Let $t$ be the stable letter of $A*_\phi$.  Then every element $g\in A*_\phi$ can be written in a \emph{reduced form} $$g=g_0t^{\epsilon_1}g_1\cdots t^{\epsilon_n}g_n,$$ where $\epsilon_i\in \{-1,1\}$ for all $i\in \{1,\ldots,n\}$, $g_i\in A$ for all $i\in \{0,\ldots,n\}$, and there are no subwords of the form $t^{-1}a_it$ with $a_i\in C_1$ or $ta_it^{-1}$ with $a_i\in C_2$.  Moreover, the number $n$ is uniquely determined by $g$ and is called the \emph{length of $g$}, denoted $\ell(g)$.
\end{enumerate}
\end{fact}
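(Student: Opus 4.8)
The plan is to separate the existence and uniqueness assertions, since the existence of \emph{some} reduced form is a routine consequence of the defining presentations, whereas the genuine content is the uniqueness of the length $n$. For uniqueness I would use the classical permutation-representation argument of van der Waerden: rather than arguing by confluence of a rewriting system, I would exhibit an action of the group on the set of formal reduced sequences and read off the reduced form of an element from the orbit of the empty sequence. The existence and uniqueness of the length then follow uniformly in both cases.

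For part (1), I would first fix transversals, choosing a set $T_A$ of right coset representatives for $C$ in $A$ and a set $T_B$ for $C$ in $B$, each taking the identity as the representative of $C$ itself. Existence then follows by writing $g$ as a product of elements of $A$ and $B$ (possible since $A\cup B$ generates $A*_C B$), merging adjacent factors lying in the same vertex group, and pushing the $C$-parts leftward using the coset decompositions; this yields an expression $g=cg_1\cdots g_n$ of the stated shape. For uniqueness, let $\Omega$ be the set of all formal sequences $(c,g_1,\dots,g_n)$ with $c\in C$, each $g_i$ a nontrivial representative from $T_A$ or $T_B$, and consecutive representatives drawn from different transversals. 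I would define for each $a\in A$ a permutation $\lambda_a$ of $\Omega$ by the rule \emph{left-multiply and re-reduce}, and likewise $\rho_b$ for each $b\in B$. A direct check shows that $a\mapsto\lambda_a$ and $b\mapsto\rho_b$ are homomorphisms into $\operatorname{Sym}(\Omega)$ agreeing on $C$, so the universal property of $A*_C B$ yields a single homomorphism $\Psi\colon A*_C B\to\operatorname{Sym}(\Omega)$. Evaluating $\Psi(g)$ at the empty sequence recovers exactly the reduced sequence of $g$, so any two reduced forms of the same $g$ coincide, and in particular $n=\ell(g)$ is well defined.

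For part (2) the argument is parallel. I would fix transversals $T_1$ for $C_1$ in $A$ and $T_2$ for $C_2$ in $A$. Existence of a reduced form follows by writing $g$ as an alternating product of elements of $A$ and powers $t^{\pm 1}$ and then performing Britton reductions, i.e. replacing any pinch $t^{-1}at$ with $a\in C_1$ by $\phi(a)$, and any pinch $tat^{-1}$ with $a\in C_2$ by $\phi^{-1}(a)$; since each such step strictly decreases the number of occurrences of $t^{\pm 1}$, the process terminates in a pinch-free form. For uniqueness I would let $\Omega$ be the set of formal reduced sequences $(g_0,\epsilon_1,g_1,\dots,\epsilon_n,g_n)$ obeying the coset-representative and no-pinch conditions, define permutations of $\Omega$ for each $a\in A$ and for $t$, verify that the single defining relation $tat^{-1}=\phi(a)$ for $a\in C_1$ is respected as permutations, and invoke the universal property of $A*_\phi$ to obtain $\Psi\colon A*_\phi\to\operatorname{Sym}(\Omega)$. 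Tracking the image of the empty sequence under $\Psi(g)$ again reads off the reduced form of $g$, giving uniqueness of $n=\ell(g)$; this is precisely the content of Britton's Lemma.

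The main obstacle in both parts is the bookkeeping needed to verify that the defined maps are genuine permutations and that the assignments respect the defining relations, so that the universal property actually applies. This is where all the real work lies: one must handle the case analysis according to whether left multiplication extends, shortens, or preserves the length of a sequence, together with the interaction between the group multiplication and the chosen transversal representatives. In the HNN case the subtlety is concentrated in correctly tracking the two transversals and the sign of $\epsilon_i$ when defining the action of $t$, ensuring no pinch is ever created and that $tat^{-1}=\phi(a)$ holds at the level of $\operatorname{Sym}(\Omega)$. Once the homomorphism $\Psi$ is produced, the remaining deduction of uniqueness is immediate.
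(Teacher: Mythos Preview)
Your outline is correct and follows the standard Artin--van der Waerden approach: build an action of the group on the set of formal reduced sequences, verify the relations, and read off the normal form from the orbit of the empty sequence. This is indeed how one proves these results, and the obstacles you flag (the case analysis on whether left multiplication lengthens, shortens, or preserves a sequence; the tracking of two transversals in the HNN case) are exactly where the work lies.

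However, there is nothing to compare against: in the paper this statement is labelled a \emph{Fact} and is not proved at all. The paper simply cites Magnus--Karrass--Solitar for part~(1) and Britton and Cohen for part~(2), and then records the nonstandard consequence it needs (that $\ell(g)$ infinite forces $g\in G_{\inf}$). So your proposal is not an alternative to the paper's argument but rather a proof of a result the paper takes as black-boxed background. If your aim is only to match the paper, a citation suffices; if you want a self-contained treatment, what you have sketched is the right thing to carry out.
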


A nonstandard consequence of this fact is that if $G$ is an amalgamated free product or HNN extension and $g\in G^*$ is such that $\ell(g)\in \n^*\setminus \n$, then $g\in G_{\inf}$.

\section{Groups with at Least Two Ends}\label{two}

In this section, we continue to let $X$ denote the metric space associated to $\operatorname{Cay}(G,S)$.  We further suppose that $G$ has at least two ends.  We fix $N$ such that $X\setminus B(1,N)$ has at least two unbounded path components.  $V$ will always denote the set of vertices of an unbounded path component of $X\setminus B(1,N)$.  Following Cohen \cite{C}, call $E\subseteq G$ \emph{almost invariant} if the symmetric difference $Eg\triangle E$ is finite for all $g\in G$.

\begin{lemma}\label{L:invariant1}
$V$ is almost invariant.
\end{lemma}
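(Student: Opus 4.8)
The plan is to argue nonstandardly and reduce everything to a statement about infinite elements. First I would record the principle that a subset $E\subseteq G$ is finite if and only if its nonstandard extension $E^*$ contains no element of $G_{\inf}$: since the balls $B(1,n)$ are finite, $E$ is finite iff $E\subseteq B(1,n)$ for some $n$, which by transfer is equivalent to $E^*\subseteq B(1,n)^*\subseteq G_{\f}$, i.e. $E^*\cap G_{\inf}=\emptyset$. Applying this to $E=Vg\triangle V$, and noting that $(Vg\triangle V)^*=V^*g\triangle V^*$ (as $g$ is a fixed standard element), it suffices to show that $V^*g\triangle V^*$ contains no element of $G_{\inf}$, for every $g\in G$.

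Next I would give a nonstandard description of $V^*$. Fix a base vertex $v_0\in V$ (possible since $V$ is unbounded, hence nonempty). Then $V$ is precisely the set of vertices lying in the same path component of $X\setminus B(1,N)$ as $v_0$, i.e. $V=\{g\in G \ | \ g\propto_N v_0\}$. Since the paper records that $\propto_N$ is an equivalence relation on $X$, transfer shows that its nonstandard extension is an internal equivalence relation on $X^*$ and that $V^*=\{g\in G^* \ | \ g\propto_N v_0\}$ is the internal $\propto_N$-class of $v_0$. In particular, for $x\in G_{\inf}$ we have $x\in V^*$ if and only if $x\propto_N v_0$.

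The key geometric observation is that right translation by a fixed $g$ moves every point a bounded distance, since $d(x,xg)=|x^{-1}xg|=|g|$, a fixed standard number. Hence for $x\in G_{\inf}$, an internal geodesic $\alpha$ from $x$ to $xg$ (which exists because $X$ is geodesic, by transfer) has every point $p$ satisfying $d(1,p)\geq |x|-|g|$, which is infinite and in particular exceeds $N$; thus $\alpha$ avoids $B(1,N)$ and witnesses $x\propto_N xg$. Since $xg\in G_{\inf}$ as well, transitivity of $\propto_N$ together with the description of $V^*$ gives $x\in V^*\iff xg\in V^*$ for every $x\in G_{\inf}$. Consequently $V^*g$ and $V^*$ agree on $G_{\inf}$, so $V^*g\triangle V^*$ contains no infinite element, and therefore $Vg\triangle V$ is finite. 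Note that this treats an arbitrary $g\in G$ directly, so no separate reduction to generators is required.

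I expect the only genuine subtlety to be the second step: justifying that $V$ is exactly captured by the first-order condition $g\propto_N v_0$ (so that transfer delivers the internal description of $V^*$ as a $\propto_N$-class), together with the care that $\propto_N$ transfers to an \emph{internal} equivalence relation on $X^*$. Once this bookkeeping is in place, the geometric step $x\propto_N xg$ for $x\in G_{\inf}$ is essentially immediate from the fact that a short geodesic starting at an infinite point stays infinitely far from $1$, and the conclusion follows formally.
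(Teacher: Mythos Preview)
Your proof is correct and follows essentially the same route as the paper's: both show that for $h\in V_{\inf}$ and fixed $g\in G$ one has $hg,hg^{-1}\in V^*$ (because the short geodesic from $h$ to $hg$ stays infinitely far from $1$, hence outside $B(1,N)$), and then deduce finiteness of $Vg\triangle V$ by a nonstandard principle. The only cosmetic differences are that the paper phrases the geometric step via the external relation $\propto$ rather than the internal $\propto_N$, and invokes underflow in place of your ``$E$ finite iff $E^*\cap G_{\inf}=\emptyset$'' criterion; your version is in fact the more explicit of the two.
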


\begin{proof}
Fix $g\in G$ and $h\in V_{\inf}$.  Note that $hg^{-1}, hg \propto h$, whence $hg^{-1},hg\in V^*$.  It follows by underflow that for all $h\in V$ with $|h|$  sufficiently large, one has that $h\in Vg$ and $hg\in V$.       
\end{proof}

\begin{fact}\label{L:invariant2}
For all but finitely many $g\in V$, we have $gV\subseteq V$ or $G\setminus V \subseteq gV$.  
\end{fact}

\begin{proof}
This is actually a special case of Lemma 1.4 from \cite{C}, which states that given any two almost invariant subsets $E_1$ and $E_2$ of $G$, then for all but finitely many $g\in E_1$, one has either $gE_2\subseteq E_1$ or $G\setminus E_1\subseteq gE_2$.  (Cohen's Lemma 1.4 has a rather straightforward proof and we have been unable to find a nonstandard proof simpler than his.)  Taking $E_1=E_2=V$, which is almost invariant by Lemma \ref{L:invariant1}, we see that for almost all $g\in V$, either $gV\subseteq V$ or $G\setminus V\subseteq gV$.
\end{proof}

\noindent Recall that $H$ denotes the end stabilizer of $G$.

\begin{lemma}\label{L:invariant3}
For any $g\in H$, we have $gV\triangle V$ is finite.
\end{lemma}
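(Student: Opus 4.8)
The plan is to reduce the finiteness of $gV\triangle V$ to a purely nonstandard statement about $G_{\inf}$ and then to exploit the defining property of the end stabilizer $H$. First I would record the translation principle: since the Cayley graph is locally finite, every ball $G\cap B(1,n)$ is finite, so a subset $A\subseteq G$ is finite if and only if $A^*\cap G_{\inf}=\emptyset$. (If $A$ were infinite, its elements would have unbounded word length, and overflow applied to the internal set $\{n\in\n^* : (\exists a\in A^*)\,|a|=n\}$ would produce $a\in A^*$ with $|a|$ infinite.) Applying this to $A=gV\triangle V$, and using both $(gV\triangle V)^*=gV^*\triangle V^*$ and the equivalence $y\in gV^*\iff g^{-1}y\in V^*$, I see that it suffices to prove: for every $y\in G_{\inf}$ one has $y\in V^*\iff gy\in V^*$.

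Next I would pin down $V^*$ in terms of the relation $\propto_N$. Fixing a standard vertex $v_0\in V$, the set $V$ is exactly $\{u\in G : u\propto_N v_0\}$, because $\propto_N$ restricted to $X\setminus B(1,N)$ is precisely the relation ``$u$ and $v_0$ lie in the same path component of $X\setminus B(1,N)$''; by transfer $V^*=\{y\in G^* : y\propto_N v_0\}$. Now I invoke $g\in H$: by the definition of the end stabilizer, $gy\propto y$ for every $y\in G_{\inf}$. Since $X_{\inf}\subseteq X^*\setminus B(1,N)^*$, any internal path witnessing $gy\propto y$ automatically stays outside $B(1,N)^*$, so (after reparametrizing as in Remark \ref{L:asymprmks}) the same path witnesses $gy\propto_N y$.

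To finish, I would use that $\propto_N$ is an equivalence relation on the set of points of distance $>N$ from $1$, and hence, by transfer, an internal equivalence relation in $X^*$. As $y,gy\in G_{\inf}$ and $v_0\in V$ all lie at distance $>N$ from $1$, combining $gy\propto_N y$ with transitivity and symmetry gives $y\propto_N v_0\iff gy\propto_N v_0$, that is, $y\in V^*\iff gy\in V^*$, which is exactly what the first paragraph required. The only genuinely delicate points are the finiteness translation in the first step and the observation that the infinite-path-component relation $\propto$ refines each standard relation $\propto_N$; once these are secured, the conclusion is a one-line appeal to transitivity of $\propto_N$, so I expect no serious obstacle beyond bookkeeping.
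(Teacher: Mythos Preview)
Your argument is correct and follows essentially the same route as the paper: both hinge on the observation that $g\in H$ gives $gy\propto y$ for every $y\in G_{\inf}$, and hence $gy$ and $y$ lie in the same internal path component of $X^*\setminus B(1,N)^*$, so membership in $V^*$ is preserved. The paper compresses the argument by starting with $h\in V_{\inf}$, noting $gh,g^{-1}h\in V^*$ directly, and then invoking underflow to obtain standard $M$ with $gh,g^{-1}h\in V$ for all $h\in V$ with $|h|\geq M$; your version unpacks the same idea via the finiteness translation principle and an explicit $\propto_N$ description of $V^*$, which makes the transitivity step visible but is otherwise the same mechanism.
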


\begin{proof}
Fix $g\in H$ and $h\in V_{\inf}$.  Then since $gh,g^{-1}h\propto h$, we have $gh,g^{-1}h\in V^*$.  So by underflow, we have that for $h\in V$ with $|h|$ sufficiently large, we have $gh,g^{-1}h\in V$, finishing the proof. 
\end{proof}

\begin{cor}\label{L:invariant5}
For all but finitely many $g\in V\cap H$, we have $gV\subseteq V$.
\end{cor}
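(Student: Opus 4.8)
The plan is to combine the dichotomy provided by Fact \ref{L:invariant2} with the $H$-invariance of $V$ up to a finite set recorded in Lemma \ref{L:invariant3}. Applying Fact \ref{L:invariant2} with $E_1 = E_2 = V$ (which is almost invariant by Lemma \ref{L:invariant1}) tells us that for all but finitely many $g \in V$—and in particular for all but finitely many $g \in V \cap H$—we have either $gV \subseteq V$ or $G \setminus V \subseteq gV$. The whole task then reduces to ruling out the second alternative whenever $g$ additionally lies in $H$.

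To carry this out, I would fix $g \in V \cap H$ and assume, toward a contradiction, that $G \setminus V \subseteq gV$. By Lemma \ref{L:invariant3} the symmetric difference $gV \triangle V$ is finite, so if we set $F := gV \setminus V$, then $F$ is finite and $gV \subseteq V \cup F$. Since $G \setminus V$ is disjoint from $V$, the assumed inclusion $G \setminus V \subseteq gV \subseteq V \cup F$ forces $G \setminus V \subseteq F$, and hence $G \setminus V$ is finite.

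The key point is that this is impossible. Recall that $V$ is the vertex set of a single one of the (at least two) unbounded path components of $X \setminus B(1,N)$. Consequently $G \setminus V$ contains every vertex of some other unbounded path component, and such a component, being unbounded, meets $X \setminus B(1,R)$ for every $R \in \r$ and therefore has infinitely many vertices. Thus $G \setminus V$ is infinite, contradicting the previous paragraph. So for every $g \in V \cap H$ the alternative $G \setminus V \subseteq gV$ fails, and the dichotomy of Fact \ref{L:invariant2} collapses to $gV \subseteq V$ for all but finitely many $g \in V \cap H$, as desired.

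I do not expect a serious obstacle in this argument; the one step warranting care is verifying that an unbounded path component genuinely contributes infinitely many group elements to $G \setminus V$, which is exactly where the standing hypothesis that $X \setminus B(1,N)$ has at least two \emph{unbounded} components is used.
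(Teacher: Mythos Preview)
Your argument is correct and matches the paper's proof essentially line for line: both apply Fact \ref{L:invariant2} to get the dichotomy, then use Lemma \ref{L:invariant3} (finiteness of $gV\triangle V$ for $g\in H$) together with the infiniteness of $G\setminus V$ (from the existence of a second unbounded component) to eliminate the alternative $G\setminus V\subseteq gV$. The only difference is that you spell out the contradiction in more detail than the paper does.
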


\begin{proof}
By Fact \ref{L:invariant2}, for all but finitely many $g\in V\cap H$, we have $gV\subseteq V$ or $G\setminus V\subseteq gV$.  However, since $G\setminus V$ is infinite (as $G$ has at least two ends), the latter alternative contradicts Lemma \ref{L:invariant3}.
\end{proof}

\begin{cor}\label{L:invariant4}
For any $g\in V_{\inf}\cap H^*$, one has $gV^*\subseteq V^*$.
\end{cor}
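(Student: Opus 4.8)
The plan is to obtain this as an immediate consequence of Corollary~\ref{L:invariant5} via the transfer principle. The only points requiring care are to phrase the ``all but finitely many'' quantifier in a form that transfers cleanly, and to observe that the finitely many exceptional elements are all \emph{finite}, so that an infinite $g$ avoids them automatically.

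First, I would restate Corollary~\ref{L:invariant5} in explicit form: there is a finite set $F\subseteq V\cap H$ such that $gV\subseteq V$ for every $g\in (V\cap H)\setminus F$. This is just the unpacking of ``for all but finitely many'' into an honest first-order statement about the standard universe, with $F$ serving as the finite set of exceptions.

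Next, I would apply the transfer principle to this statement. Since $F$ is finite we have $F^*=F$, and $(V\cap H)^*=V^*\cap H^*$; hence the transferred assertion reads: $gV^*\subseteq V^*$ for every $g\in (V^*\cap H^*)\setminus F$. Finally, I would observe that any $g\in V_{\inf}\cap H^*$ lies in $V^*\cap H^*$ but, being an infinite element, does not lie in $V$, and therefore not in $F\subseteq V$. Thus the transferred statement applies to such a $g$ and yields $gV^*\subseteq V^*$, as desired.

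There is no genuine obstacle here beyond the bookkeeping just described: the entire substantive content has already been established in Corollary~\ref{L:invariant5}, and the present corollary merely records that the same inclusion persists for the infinite elements of $V^*\cap H^*$, where the finite exceptional set $F$ can no longer interfere. The one subtlety worth flagging in the write-up is the explicit use of $F^*=F$, which is what guarantees that the infinite $g$ we care about fall outside the exceptional set after transfer.
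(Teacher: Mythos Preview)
Your proposal is correct and is precisely the argument the paper has in mind: the corollary is stated without proof immediately after Corollary~\ref{L:invariant5}, and is meant to follow from it by transfer together with the observation that the finite exceptional set $F\subseteq V$ cannot contain any element of $V_{\inf}$. Your write-up makes explicit exactly the two points (namely $F^*=F$ and $V_{\inf}\cap F=\emptyset$) that the paper leaves implicit.
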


\noindent The proof of the following theorem is essentially the same as in \cite{C}, but we include it here for completeness.

\begin{thm}[Hopf \cite{Hopf}, Cohen \cite{C}]\label{T:twoends}
If $G$ has at least two ends and has infinite end stabilizer, then $G$ is virtually $\z$ (whence it has exactly two ends).  In particular, if $G$ has exactly two ends, then $G$ is virtually $\z$.
\end{thm}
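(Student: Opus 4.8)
The plan is to produce an infinite-order ``translation'' $a$ lying in the end stabilizer $H$ and to show that $\langle a\rangle$ has finite index in $G$, so that $G$ is virtually $\z$; the final ``in particular'' is then immediate, since if $G$ has exactly two ends then $H$ is the kernel of the action of $G$ on the two-point set $\ee(G)$, hence has index at most two and so is infinite, and the main statement applies.

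First I would choose the component $V$ wisely. Since $X$ is locally finite, only finitely many vertices are adjacent to the finite ball $B(1,N)$, so $X\setminus B(1,N)$ has only finitely many components, in particular finitely many unbounded ones $V_1,\dots,V_k$, and $G=\bigsqcup_i V_i\sqcup F$ with $F$ finite. As $H$ is infinite, some $V_i\cap H$ is infinite; relabel so that $V:=V_i$ satisfies $V\cap H$ infinite. By Corollary \ref{L:invariant5} I may then pick $a\in V\cap H$ with $aV\subseteq V$. Since $1\notin V$ we have $a\notin aV$, so $a\in V\setminus aV$ and the containment is strict; moreover $D:=V\setminus aV=aV\triangle V$ is finite by Lemma \ref{L:invariant3}. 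The sets $a^nV$ then form a strictly decreasing chain (as $a^nV\setminus a^{n+1}V=a^nD\neq\emptyset$), so $a$ has infinite order.

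Next I would set up the book-keeping. A telescoping computation gives $V=\bigsqcup_{n\geq0}a^nD\sqcup V_\infty$, where $V_\infty:=\bigcap_{n\geq0}a^nV$; and using $a^{-1}(G\setminus V)\subseteq G\setminus V$ (which follows from $aV\subseteq V$) one gets the symmetric decomposition $G\setminus V=\bigsqcup_{n\geq1}a^{-n}D\sqcup V'_\infty$ with $V'_\infty:=\bigcap_{n\geq0}a^{-n}(G\setminus V)$. Hence
\[
G=\Big(\bigsqcup_{n\in\z}a^nD\Big)\sqcup V_\infty\sqcup V'_\infty=\langle a\rangle D\sqcup V_\infty\sqcup V'_\infty .
\]
Thus $[G:\langle a\rangle]\leq|D|<\infty$ provided $V_\infty=V'_\infty=\emptyset$, which is the crux of the whole argument.

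Showing $V_\infty=\emptyset$ is the step I expect to be the main obstacle. The key observation is a clean placement of the cyclic subgroup: since $a\in V$, induction on $aV\subseteq V$ gives $a^n\in V$ for $n\geq1$, while for $n\geq0$ one has $V\subseteq a^{-n}V$ yet $a^{-n}\notin a^{-n}V$ (because $a^{-n}\in a^{-n}V\iff 1\in V$ is false), so $a^{-n}\in G\setminus V$. Now suppose $x\in V_\infty$. By definition $a^{-n}x\in V$ for every $n\geq0$, whereas $a^{-n}\in G\setminus V$, and yet $d(a^{-n},a^{-n}x)=|x|$ is a fixed finite number for all $n$. Since $V$ and $G\setminus V$ meet distinct components of $X\setminus B(1,N)$, any path realizing this distance must pass through $B(1,N)$, forcing $|a^{-n}|\leq N+|x|$; this contradicts $|a^{-n}|=|a^n|\to\infty$. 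Hence $V_\infty=\emptyset$, and the symmetric argument, comparing $(a^n)_{n\geq1}\subseteq V$ with $(a^nx')_{n\geq1}\subseteq G\setminus V$ for $x'\in V'_\infty$, gives $V'_\infty=\emptyset$. Therefore $\langle a\rangle$ has finite index, $G$ is virtually $\z$, and by the quasi-isometry invariance of ends (Lemma \ref{L:qicont}) it has exactly two ends.
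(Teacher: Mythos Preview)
Your proof is correct and follows the same overall strategy as the paper: choose $a\in V\cap H$ with $aV\subseteq V$ (you are more careful than the paper in first arranging that $V\cap H$ be infinite), show $a$ has infinite order, and conclude that $\langle a\rangle$ has finite index by exhibiting the finite set $D=V\setminus aV$ as a set of coset representatives once the residual intersections $V_\infty$ and $V'_\infty$ are shown to be empty.

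The one substantive difference is in how $V_\infty=\emptyset$ is established. The paper observes that $x\in V_\infty$ would give $a^{-n}\in Vx^{-1}$ for all $n$, and then invokes Lemma~\ref{L:invariant1} (almost-invariance of $V$ under \emph{right} translation) to conclude that all but finitely many $a^{-n}$ lie in $V$, contradicting $a^{-n}\notin V$. You instead argue directly with the geometry: $a^{-n}x\in V$ and $a^{-n}\notin V$ are at fixed distance $|x|$, so for large $n$ any geodesic between them must cross $B(1,N)$, bounding $|a^{-n}|$. These are really the same observation---your geometric argument is an in-place proof of exactly the special case of almost-invariance the paper cites---so the two proofs are essentially identical in substance.
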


\begin{proof}
Fix $V$ as in the beginning of this section.  Choose $g\in V\cap H$ such that $gV\subseteq V$; this is possible by Corollary \ref{L:invariant5} and the fact that $H$ is infinite.  Note that then $g^n\in V$ for all $n$ (whence $g$ has infinite order) and that $g^{-1}\notin V$.  Now note that every $x\in V$ can be written as $x=g^mv$, for some $m$ and some $v\in V\setminus gV$.  Indeed, if $x\in \bigcap_n g^nV$, then $g^{-n}\in Vx^{-1}$ for all $n$; but Lemma \ref{L:invariant1} tells us that $Vx^{-1}$ differs from $V$ by a finite number of elements of $G$, yielding a contradiction to the fact that $g^{-n}\notin V$ for all $n$.  Likewise, since $G\setminus V$ is almost invariant, every $x\in G\setminus V$ can be written in the form $x=g^{-m}v$, for some $m$ and some $v\in (G\setminus V)\setminus (g^{-1}(G\setminus V))$.  Lemma \ref{L:invariant3} tells us that  $V\setminus gV$ is finite (whence $(G\setminus V)\setminus (g^{-1}(G\setminus V))$ is also finite), and hence the subgroup of $G$ generated by $g$ has finite index in $G$.
\end{proof}

\noindent Note that in a group with one end, we have $G=H$ and $g\propto g^{-1}$ for every $g\in G_{\inf}=H_{\inf}$.  Contrast this with the following lemma.

\begin{lemma}\label{L:invasymp}
Suppose that $G$ has two ends.  Then for all $g\in H_{\inf}$, $g\not\propto g^{-1}$.
\end{lemma}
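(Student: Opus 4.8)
The plan is to show that a group with two ends has a sharp dichotomy between its two infinite path components, and that inversion must swap them. Since $G$ has two ends, $\ipc(X)$ has exactly two elements, say $[x_1]$ and $[x_2]$ with $x_1\not\propto x_2$; the infinite vertices $G_{\inf}$ partition into these two classes. The key fact I would establish first is that \emph{left multiplication by any $h\in H$ fixes each class}, since $H$ is the end stabilizer. Thus for $h\in H_{\inf}$, the element $h$ itself lies in exactly one of the two classes.

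First I would use Theorem \ref{T:twoends}: since $G$ has two ends it is virtually $\z$, so $H$ is infinite and (by the proof there) there is an element $g\in H$ of infinite order whose powers $g^n$ all lie in a single unbounded path component $V$ while $g^{-n}\notin V$ for $n>0$. This already exhibits an infinite-order $h=g$ with $h\not\propto h^{-1}$, but I want the statement for \emph{all} $h\in H_{\inf}$. The conceptual content is that the two ends of $G$ are interchanged by the inversion map $x\mapsto x^{-1}$, so no infinite element can be $\propto$-equivalent to its own inverse.

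The main step is therefore to show that inversion swaps the two infinite path components. I would argue as follows. Consider the set $V$ of vertices of one unbounded path component of $X\setminus B(1,N)$, as fixed at the start of the section, so $V$ and $G\setminus V$ (up to finitely many vertices) represent the two ends. For $h\in H_{\inf}$, suppose toward a contradiction that $h\propto h^{-1}$. Using Lemma \ref{L:asympcay}, a $\propto$-path from $h$ to $h^{-1}$ must stay in $G_{\inf}$; but passing from $V^*$ to $(G\setminus V)^*$ requires passing through $B(1,N)^*$, i.e.\ through $G_{\f}$, which is forbidden. Concretely, if $h\in V^*$ then $h^{-1}\in (G\setminus V)^*$, and a hyperfinite $S^{\pm1}$-path from $h$ to $h^{-1}$ staying in $G_{\inf}$ would give $h\propto h^{-1}$ crossing between the two classes, contradicting $x_1\not\propto x_2$. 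So the crux is proving that $h\in V^*\iff h^{-1}\in(G\setminus V)^*$, i.e.\ that inversion sends one end to the other.

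\textbf{The hard part} will be exactly this: showing inversion is \emph{orientation-reversing} on the two ends rather than fixing each. The clean way is to invoke the structure from Theorem \ref{T:twoends} together with Corollary \ref{L:invariant4}: the element $g\in V\cap H$ with $gV^*\subseteq V^*$ satisfies $g^{-1}\notin V$, and since $G=\langle g\rangle$-by-finite, every $h\in H_{\inf}$ is comparable to a hyperfinite power of $g$; writing $h\propto g^\nu$ for an appropriate $\nu\in\z^*$ of infinite absolute value, one gets $h^{-1}\propto g^{-\nu}$, and $g^\nu$, $g^{-\nu}$ lie in opposite classes because $g^n\in V$ while $g^{-n}\notin V$ for all $n$ (transferred and combined with Corollary \ref{L:invariant4}). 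I expect the technical obstacle to be making the reduction ``every $h\in H_{\inf}$ is $\propto$-equivalent to a hyperfinite power of $g$'' fully rigorous; this should follow from the finite-index of $\langle g\rangle$ in $G$ by a bounded-distance argument, since then every $h\in H$ lies within a fixed finite distance $K$ of some power $g^n$, and Lemma \ref{L:asympcay} upgrades bounded distance to $\propto$-equivalence for infinite elements.
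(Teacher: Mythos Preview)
Your plan is workable in principle but takes a substantially longer route than the paper, and it has a gap in the reduction step. You correctly isolate the crux as showing that $h\in V^*$ forces $h^{-1}\notin V^*$. But your proposed proof of this crux---via Theorem~\ref{T:twoends}, writing $h\propto g^{\nu}$ for the distinguished generator $g$, and then concluding $h^{-1}\propto g^{-\nu}$---does not go through as stated. Bounded distance gives $h=g^{\nu}c$ with $|c|\le K$, hence $h^{-1}=c^{-1}g^{-\nu}$; however $d(h^{-1},g^{-\nu})=|g^{\nu}c^{-1}g^{-\nu}|$, which need not be bounded, so Lemma~\ref{L:asympcay} does not immediately yield $h^{-1}\propto g^{-\nu}$. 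One can repair this with extra structural work (using that some power of $g$ generates a normal infinite cyclic subgroup, so conjugation by finite elements sends $g^{k}$ to $g^{\pm k}$), but this is exactly the kind of detour that makes the argument heavier than necessary.

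The paper's proof avoids all of this by applying Corollary~\ref{L:invariant4} directly to the element $g\in H_{\inf}$ in question, not to an auxiliary generator. Choose $V$ with $g\in V^*$; since $g\in V_{\inf}\cap H^*$, Corollary~\ref{L:invariant4} gives $gV^*\subseteq V^*$. If $g\propto g^{-1}$, then the internal path witnessing this lies in $X_{\inf}\subseteq X^*\setminus B(1,N)^*$, so by transfer it stays in the path component $V^*$, whence $g^{-1}\in V^*$. But then $1=g\cdot g^{-1}\in gV^*\subseteq V^*$, contradicting $1\in B(1,N)$. That is the whole proof. The insight you are missing is that Corollary~\ref{L:invariant4} already applies to \emph{every} element of $V_{\inf}\cap H^*$, so there is no need to reduce to powers of a fixed generator or to invoke Theorem~\ref{T:twoends} at all.
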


\begin{proof}
Consider $g\in H_{\inf}$ and fix $V$ such that $g\in V^*$.  By Corollary \ref{L:invariant4}, we have $gV^*\subseteq V^*$.  If $g\propto g^{-1}$, then $g^{-1}\in V^*$, whence $1\in V^*$, a contradiction.
\end{proof}

\begin{lemma}\label{L:semigroup}
Suppose that $G$ has two ends.  Then for every hyperfinite sequence $g_1,\ldots,g_\eta$ of elements of $H_{\inf}$ such that $g_i\propto g_j$ for all $i,j\in \{1,\ldots,\eta\}$, we have $g_1\cdots g_\eta\in H_{\inf}$ and $g_1\cdots g_\eta\propto g_1$.  In particular, for every $g\in H_{\inf}$ and every $\eta\in \n^*\setminus \{0\}$, we have $g^\eta\in H_{\inf}$ and $g^\eta\propto g$.
\end{lemma}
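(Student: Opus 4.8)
The plan is to run the whole argument inside the single unbounded component through which $g_1$ exits, exactly as in the proof of Lemma~\ref{L:invasymp}. First I would fix an unbounded path component of $X\setminus B(1,N)$, with vertex set $V$, chosen so that $g_1\in V^*$. Since $G$ has two ends, Hopf's Theorem~\ref{T:numberofends} together with the identification $\ee(X)\cong\ipc(X)$ of Lemma~\ref{L:IPC} shows that $X\setminus B(1,N)$ has exactly two unbounded path components, with vertex sets $V$ and $W$ say, and that $V_{\inf}$ and $W_{\inf}$ are precisely the two infinite path components of $X$. (This is the same structural fact already used tacitly in Lemma~\ref{L:invasymp} when one passes from $g\propto g^{-1}$ to $g^{-1}\in V^*$.) Consequently, for $x,y\in G_{\inf}$ we have $x\propto y$ if and only if $x,y$ lie in the same one of $V^*,W^*$. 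Since each $g_i\propto g_1\in V_{\inf}$, I conclude that every $g_i$ lies in $V_{\inf}\cap H^*$, so Corollary~\ref{L:invariant4} applies and gives $g_iV^*\subseteq V^*$ for each $i$.

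Next I would show that every suffix product stays in $V^*$ by \emph{internal} induction. Setting $P_k:=g_kg_{k+1}\cdots g_\eta$, we have $P_\eta=g_\eta\in V^*$, and if $P_{k+1}\in V^*$ then $P_k=g_kP_{k+1}\in g_kV^*\subseteq V^*$ because $g_k\in V_{\inf}\cap H^*$. The set $\{k : P_k\in V^*\}$ is an internal subset of $\{1,\dots,\eta\}$ containing $\eta$ and closed under passing from $k+1$ to $k$, so internal induction yields $P_1=g_1\cdots g_\eta\in V^*$, and likewise $P_2\in V^*$. Note this already places the product in $H^*$, being a hyperfinite product of elements of $H^*$.

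It then remains to rule out that $P_1$ telescopes down to a bounded element, and here I would write $P_1=g_1P_2$ and argue as follows. If $P_1\in G$ were finite, then $\r_{\f}\ni|g_1P_2|=d(g_1^{-1},P_2)$. Were $P_2$ finite, this would force $g_1^{-1}$, hence $g_1$, to be finite, contradicting $g_1\in G_{\inf}$. Otherwise $P_2\in V_{\inf}$, and $d(g_1^{-1},P_2)\in\r_{\f}$ gives $g_1^{-1}\propto P_2$ by the implication $(3)\Rightarrow(1)$ of Lemma~\ref{L:asympgeo} (a two-term geodesic joining two infinite points at finite distance); since $P_2\propto g_1$ (both lie in $V_{\inf}$), transitivity yields $g_1^{-1}\propto g_1$, contradicting Lemma~\ref{L:invasymp}. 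Hence $P_1$ is infinite, so $P_1\in V_{\inf}$, which gives $P_1\propto g_1$ and $P_1\in H_{\inf}$. The concluding ``in particular'' statement is just the special case $g_1=\cdots=g_\eta=g$.

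The hard part will be precisely this last step: containment $P_1\in V^*$ by itself does \emph{not} guarantee that $P_1$ is infinite, since $V^*$ also contains the standard points of $V$, so a hyperfinite product of infinite elements could a priori collapse to a bounded word. The asymmetry $g\not\propto g^{-1}$ of Lemma~\ref{L:invasymp}, which is available exactly because $G$ has two ends, is what forbids this collapse, and getting that contradiction to fire cleanly (rather than the invariance $g_iV^*\subseteq V^*$ or the induction, both of which are routine) is the crux of the proof.
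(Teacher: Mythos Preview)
Your proof is correct but takes a genuinely different route from the paper's at the key step. Both arguments begin identically: fix $V$ with all $g_i\in V^*\cap H^*$, apply Corollary~\ref{L:invariant4} to get $g_iV^*\subseteq V^*$, and use internal induction on suffixes to land $P_1=g_1\cdots g_\eta$ in $V^*$. The paper then upgrades this to $P_1\propto g_1$ by rerunning the containment argument at \emph{every} level $n\ge N$ (replacing $V$ by the unbounded component of $X\setminus B(1,n)$ containing the $g_i$), obtaining $P_1\propto_n g_1$ for all standard $n$, and invoking overflow to reach an infinite level $\nu$. You instead establish once and for all the structural fact that $V^*\cap G_{\inf}$ and $W^*\cap G_{\inf}$ are exactly the two $\propto$-classes (which follows since they partition $G_{\inf}$, are not $\propto_N$-related, and there are only two classes), so that $P_1\in V^*$ plus infiniteness gives $P_1\propto g_1$ directly; the infiniteness of $P_1$ you then squeeze out of Lemma~\ref{L:invasymp} via the dichotomy on $P_2$. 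Your approach trades the paper's overflow for an appeal to the just-proved asymmetry $g\not\propto g^{-1}$, and has the virtue of making explicit that in a two-ended group the single ball $B(1,N)$ already separates the two infinite path components; the paper's approach is slightly more self-contained in that it does not rely on Lemma~\ref{L:invasymp}.
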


\begin{proof}
Let $V$ be such that $g_i\in V^*$ for all $i\in \{1,\ldots,\eta\}$.  By Corollary \ref{L:invariant4}, we have $g_iV^*\subseteq V^*$ for each $i\in \{1,\ldots,\eta\}$.  By internal induction, one can show that $g_{\eta-i}\cdots g_\eta\in V^*$ for all $i\in \{0,\ldots,\eta-1\}$, whence $g_1\cdots g_\eta \in V^*$.  Hence $g_1\cdots g_\eta \propto_N g_1$.  Notice that the same argument can be applied to any $n\geq N$, whence $g_1\cdots g_\eta \propto_n g_1$ for all $n\geq N$.  Hence, by overflow, there is $\nu \in \n^*\setminus \n$ such that $g_1\cdots g_\eta \propto_\nu g_1$, finishing the proof.  
\end{proof}

\begin{ex}\label{L:free}
The free product $G:=\z_2*\z_2$ is a group with two ends which does not equal its own end stabilizer; here $\z_2$ denotes the group of two elements.  Let $a$ and $b$ be distinct generators for the two factors of $\z_2$.  To see that $G$ has two ends, notice that reduced words of infinite length are in the same infinite path component if and only if they both begin with $a$ or both begin with $b$.  It then follows that left multiplication by $a$ permutes the two ends of $G$, so $G$ does not equal its own end stabilizer.  Another way to see that $G$ does not equal its own end stabilizer is the observation that any reduced word of infinite length which begins and ends with the same element (e.g. $\underbrace{abab\cdots a}_{\nu \text{ factors, }\nu \in \n^*\setminus \n}$) has order $2$, whence cannot be in the nonstandard extension of the end stabilizer by Lemma \ref{L:semigroup}.
\end{ex}

\noindent Lemma \ref{L:semigroup} leads us to ask what groups $G$ have \textbf{multiplicative ends}: for all infinite $g,g'\in G^*$, if $g\propto g'$, then $gg'\in G_{\inf}$ and $gg'\propto g$?  It turns out that there is a standard characterization of groups with this property.  We first provide a well-known consequence of Stalling's Theorem for which we were unable to find a reference.  The outline of the proof was communicated to me by Ilya Kapovich.  Recall that a group $G$ is a \emph{(internal) semidirect product of $K$ by $Q$} if $K$ and $Q$ are subgroups of $G$, $K$ is normal in $G$, $G=KQ$, and $K\cap Q=\{1\}$.

\begin{lemma}\label{L:chartwoends}
A finitely generated group $G$ has two ends if and only if $G$ is a semidirect product of a finite group by a group which is isomorphic to $\z$ or $\z_2*\z_2$.  
\end{lemma}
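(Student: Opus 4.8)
The plan is to prove the two implications separately, using throughout that the number of ends is a quasi-isometry invariant (the corollary to Lemma \ref{L:qicont}).

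For the converse direction, suppose $G$ is a semidirect product $F \rtimes Q$ with $F$ finite and $Q \cong \z$ or $Q \cong \z_2 * \z_2$. Since $F$ is normal with $F \cap Q = \{1\}$, the complement $Q$ is a finitely generated subgroup of index $|F|$ in $G$; by the quasi-isometry of Cayley graphs noted after Theorem \ref{T:numberofends}, $G$ is quasi-isometric to $Q$, so $\ee(G)$ and $\ee(Q)$ are homeomorphic. As $\z$ has two ends (its Cayley graph is quasi-isometric to $\r$) and $\z_2 * \z_2$ has two ends by Example \ref{L:free}, we conclude that $G$ has two ends.

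For the forward direction, assume $G$ has two ends. By Theorem \ref{T:twoends}, $G$ is virtually $\z$, and passing to the normal core I may fix a finite-index normal subgroup $C \cong \z$. I would first extract the maximal finite normal subgroup $F$ of $G$: any finite normal subgroup meets the torsion-free group $C$ trivially, hence embeds into the finite group $G/C$, so such subgroups have bounded order and their products stay finite and normal; thus a largest one $F$ exists. The quotient $\bar G := G/F$ is still two-ended (the quotient map has finite kernel, hence is a quasi-isometry) and now has no nontrivial finite normal subgroup. Examining the conjugation action of $\bar G$ on the finite-index normal subgroup $\bar C \cong \z$ gives a homomorphism $\bar G \to \Aut(\z) = \{\pm 1\}$ whose kernel $\bar G_0$ contains $\bar C$ as a central subgroup of finite index; Schur's theorem then forces $[\bar G_0, \bar G_0]$ to be finite and normal, hence trivial, so $\bar G_0$ is a finitely generated abelian group that is virtually $\z$ and has trivial finite radical, i.e. $\bar G_0 \cong \z$. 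Depending on whether the action is trivial, either $\bar G \cong \z$, or $\bar G$ is an extension of $\z_2$ by $\z$ with the $\z_2$ acting by inversion; since the relevant second cohomology group vanishes, this extension splits and $\bar G \cong \z \rtimes \z_2 \cong \z_2 * \z_2$.

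The remaining step, which I expect to be the main obstacle, is to promote the isomorphism $\bar G \cong Q$ to a splitting of $1 \to F \to G \to \bar G \to 1$, so as to realize $G$ itself as $F \rtimes Q$. When $Q \cong \z$ this is automatic, since extensions by a free group split. When $Q \cong \z_2 * \z_2$ the idea is to exploit the free-product structure: split the pulled-back extension of $F$ over each $\z_2$-factor and then amalgamate the resulting sections via the universal property of the free product. Producing the two required sections over the $\z_2$-factors is the delicate point, and it is here that one must bring in the concrete amalgamated-product description of $G$ furnished by Stallings' Theorem (namely $G \cong A *_C B$ with $C$ finite and $[A:C] = [B:C] = 2$) rather than rely on abstract cohomology alone. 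Producing and gluing these two sections would then yield $G \cong F \rtimes Q$, completing the proof.
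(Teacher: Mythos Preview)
Your overall architecture is sound and genuinely different from the paper's. The paper invokes Stallings' theorem at the outset to obtain a cocompact action of $G$ on a simplicial line $T$ with finite edge stabilizers, then analyses the image $\alpha(G)\subset\operatorname{Aut}(T)$ directly (translations only, or with a reflection) to identify it with $\z$ or $\z_2*\z_2$. You instead pass through the finite radical $F$ and use Schur's theorem to force $\bar G_0\cong\z$, which is a clean purely group-theoretic route and avoids Bass--Serre theory until the very end. Both routes land at the same short exact sequence $1\to F\to G\to Q\to 1$ with $F$ finite and $Q\cong\z$ or $\z_2*\z_2$, and both must then split it; the paper simply asserts the splitting, while you rightly flag it as ``the main obstacle''.

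Unfortunately that obstacle is fatal: the splitting genuinely fails, so the lemma as stated is false and neither argument can be completed. Take $G=\z_4*_{\z_2}\z_4$, amalgamating the two copies of $\z_4=\langle a\rangle,\langle b\rangle$ along their index-two subgroups via $a^2=b^2=:c$. Then $c$ is central, $G/\langle c\rangle\cong\z_2*\z_2$, and $G$ is virtually $\z$, hence two-ended. But $G^{\mathrm{ab}}\cong\z_2\times\z_4$ is finite, so $G$ does not surject onto $\z$ and cannot be $F\rtimes\z$. And by the torsion theorem for amalgams every involution of $G$ is conjugate to an involution of a factor, hence equals the central element $c$; since $\z_2*\z_2$ contains infinitely many involutions, $G$ contains no copy of $\z_2*\z_2$ and cannot be $F\rtimes(\z_2*\z_2)$ either. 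Your proposed rescue via Stallings---writing $G=A*_C B$ with $[A:C]=[B:C]=2$ and splitting over each $\z_2$-factor---breaks exactly here: in this example $A\setminus C=\{a,a^{3}\}$ contains no involution, so no section over the first $\z_2$ exists. The correct statement (which your argument up to the last paragraph does prove) is that a two-ended $G$ admits a finite normal subgroup $F$ with $G/F\cong\z$ or $\z_2*\z_2$; the word ``semidirect'' should be dropped.
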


\begin{proof}
The ``if" direction follows from Example \ref{L:free} and the fact that a virtually two-ended group is itself two-ended.  We now prove the ``only if" direction.  By Stallings theorem, $G$ admits a simplicial cocompact action on a simplicial line $T$ with finite-edge stabilizers.  We thus obtain a homorphism $\alpha:G\to \operatorname{Aut}(T)$ with finite kernel $K$.  We claim that $\alpha(G)$ is isomorphic to $\z$ or $\z_2*\z_2$.

\noindent \textbf{Case 1:}  $\alpha(G)$ only contains translations.  Choose $g\in G$ such that the translation distance of $\alpha(g)$ is minimal with respect to the translation distances of the elements of $\alpha(G\setminus K)$.  We claim that $\alpha(G)=\langle \alpha(g)\rangle$, the subgroup of $\Aut(T)$ generated by $\alpha(g)$, yielding that $\alpha(G)$ is isomorphic to $\z$.  Indeed, let $n>0$ equal the translation distance of $\alpha(g)$.  Fix $h\in G\setminus K$ and let $m$ equal the translation distance of $\alpha(h)$.  Let $q,r\in \n$ be such that $m=qn+r$, where $q>0$ and $r\in \{0,\ldots,n-1\}$.  Since $\alpha(g^{-q}h)$ is an element of $\alpha(G)$ of translation distance $r$, it follows by choice of $g$ that $\alpha(g^{-q}h)=\operatorname{id}_T$ and hence $\alpha(h)$ is in the subgroup of $\Aut(T)$ generated by $\alpha(g)$.

\noindent \textbf{Case 2:}  $\alpha(G)$ contains an orientation-reversing element $\alpha(g)$.  We first claim that $\alpha(G)$ also contains a nontrivial translation.  Since $\alpha$ has a finite kernel, we have that $\alpha(G)$ is infinite.  Choose $h\in G$ such that $\alpha(h)\notin \{\operatorname{id}_T,\alpha(g)\}$.  If $\alpha(h)$ is not a translation, then $\alpha(h)$ is an orientation-reversing element, whence $\alpha(g)\alpha(h)=\alpha(gh)$ is a nontrivial translation.  Choose $h\in G$ such that $\alpha(h)$ is a nontrivial translation and the translation distance of $\alpha(h)$ is minimal with respect to the translation distances of the translations in $\alpha(G\setminus K)$.  Let $g':=gh$.  We next claim that $\alpha(G)=\langle \alpha(g),\alpha(g')\rangle$, the subgroup of $\Aut(T)$ generated by $\alpha(g)$ and $\alpha(g')$.  Fix $y\in G\setminus K$.  If $\alpha(y)$ is a translation, then $\alpha(y)\in \langle \alpha(h)\rangle$ as in Case 1.  If $\alpha(y)$ is an orientation reversing element, then $\alpha(gy)$ is a translation, whence $\alpha(y)\in \langle \alpha(g),\alpha(h)\rangle$.  Now it is easy to prove that the natural map $\langle g\rangle*\langle g'\rangle \to \alpha(G)$ is an isomorphism, whence it follows that $\alpha(G)$ is isomorphic to $\z_2* \z_2$.   

In either case, the exact sequence $1\to K \to G \to \alpha(G) \to 1$ admits a splitting, i.e. a group homomorphism $\beta:\alpha(G)\to G$ such that $\alpha \beta=\operatorname{id}_{\alpha(G)}$.  It then follows that $G$ is a semidirect product of $K$ by $\alpha(G)$.
\end{proof}

\begin{prop}\label{L:multends}
For a finitely generated group $G$, the following are equivalent:

\begin{enumerate}
\item $G$ has two ends and equals its own end stabilizer;
\item $G$ has multiplicative ends;
\item for all $g\in G_{\inf}$, $g\not \propto g^{-1}$;
\item $G$ is a semidirect product of a finite group by an infinite cyclic group;
\item $G$ has two ends and has an infinite cyclic central subgroup.
\end{enumerate}
\end{prop}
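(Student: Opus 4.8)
The plan is to establish the cycle $(4)\Rightarrow(5)\Rightarrow(1)\Rightarrow(2)\Rightarrow(3)\Rightarrow(4)$, with the last implication carrying essentially all of the weight.

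The four implications $(4)\Rightarrow(5)\Rightarrow(1)\Rightarrow(2)\Rightarrow(3)$ are all short. For $(4)\Rightarrow(5)$, write $G=K\rtimes\langle t\rangle$ with $K$ finite and $\langle t\rangle\cong\z$; since $\operatorname{Aut}(K)$ is finite, some power $t^m$ with $m\geq 1$ centralizes $K$, hence $t^m$ is central and $\langle t^m\rangle$ is an infinite cyclic central subgroup, while $\langle t\rangle$ has finite index so $G$ is two-ended. For $(5)\Rightarrow(1)$, let $z$ generate an infinite cyclic central subgroup and fix $g\in G$; for $\nu\in\n^*\setminus\n$ we have $z^\nu\in G_{\inf}$, and $gz^\nu=z^\nu g$ lies a finite distance from $z^\nu$, so $gz^\nu\propto z^\nu$ and thus $g$ fixes the end $[z^\nu]$. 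As $G$ has exactly two ends, fixing one end forces $g$ to fix both, so $g\in H$; since $g$ was arbitrary, $G=H$. For $(1)\Rightarrow(2)$, apply Lemma \ref{L:semigroup} to the two-term hyperfinite sequence $g,g'$ (legitimate because $G=H$ makes every infinite element lie in $H_{\inf}$): if $g\propto g'$ then $gg'\in G_{\inf}$ and $gg'\propto g$, which is exactly the definition of multiplicative ends. Finally $(2)\Rightarrow(3)$ is immediate: if $g\propto g^{-1}$ for some $g\in G_{\inf}$, then multiplicativity forces $gg^{-1}=1\in G_{\inf}$, which is absurd. (One could instead deduce $(1)\Rightarrow(3)$ directly from Lemma \ref{L:invasymp}.)

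It remains to prove $(3)\Rightarrow(4)$. First, $(3)$ rules out one end, since in a one-ended group $G_{\inf}$ is a single $\propto$-class and so $g\propto g^{-1}$ for every $g\in G_{\inf}$; hence $G$ has at least two ends. Once $G$ is known to have exactly two ends, Lemma \ref{L:chartwoends} gives $G\cong K\rtimes Q$ with $K$ finite and $Q\in\{\z,\z_2*\z_2\}$, and I can rule out $Q=\z_2*\z_2$ as follows. The quotient map $\pi:G\to Q$ by the finite normal subgroup $K$ is a quasi-isometry, so by Lemma \ref{L:qinonst} it preserves $\propto$ on infinite elements while obviously satisfying $\pi(g^{-1})=\pi(g)^{-1}$. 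Taking the hyperfinite palindrome $\bar g=(ab)^\mu a$ in the infinite dihedral group $\z_2*\z_2$ (which has order two, so $\bar g=\bar g^{-1}$ and trivially $\bar g\propto\bar g^{-1}$) and lifting it by transfer to some $g\in G_{\inf}$ yields $g\propto g^{-1}$, contradicting $(3)$. Thus $Q=\z$, giving $(4)$.

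The main obstacle is the remaining step: showing that $(3)$ forbids three, equivalently (by Theorem \ref{T:numberofends}) infinitely many, ends. The plan is to show that an infinitely-ended $G$ always contains an element $g\in G_{\inf}$ with $g\propto g^{-1}$. Using Stallings' theorem, $G$ splits over a finite subgroup, and a ping-pong argument on the associated Bass--Serre tree produces a quasi-isometrically embedded free subgroup $\langle c,b\rangle\cong F_2$. For $\mu\in\n^*\setminus\n$ set $g:=c^\mu b c^{-\mu}\in G_{\inf}$; then the finite-step sequence $c^\mu b c^{-(\mu-k)}$ for $k=0,\ldots,\mu$, followed by $c^\mu$, then $c^\mu b^{-1}$, then $c^\mu b^{-1}c^{-j}$ for $j=0,\ldots,\mu$, connects $g$ to $g^{-1}=c^\mu b^{-1}c^{-\mu}$ through elements whose reduced $F_2$-length, and hence (via the quasi-isometric embedding) whose $G$-length, is everywhere at least of the order of $\mu$ and therefore infinite; consecutive terms differ by the finite distances $|c|$ or $|b|$, so Lemma \ref{L:asympgeo} yields $g\propto g^{-1}$, the desired contradiction. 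Verifying that such a quasi-convex free subgroup exists and that the length estimates survive the quasi-isometric embedding is the delicate part, and it is the one place where the geometry of ends genuinely enters, in contrast to the purely formal nonstandard manipulations driving the rest of the argument.
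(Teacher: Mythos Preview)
Your cycle is correct, and the short implications $(4)\Rightarrow(5)\Rightarrow(1)\Rightarrow(2)\Rightarrow(3)$ match the paper's treatment essentially verbatim. The substantive divergence is in how you close the cycle. The paper proves $(3)\Rightarrow(1)$ directly: in the two-ended case with $G\neq H$ it writes $G=H\sqcup xH$, picks $h\in H_{\inf}$, and observes that $g:=xh$ satisfies $g\propto g^{-1}$ because $x$ swaps the two ends while $h\not\propto h^{-1}$ by Lemma~\ref{L:invasymp}; in the infinitely-ended case it works \emph{directly} with the Stallings splitting and the reduced-form theorems already quoted in the paper, taking $g=(ab)^{?}a$ in $A*_CB$ and showing $gbg^{-1}\propto gb^{-1}g^{-1}=(gbg^{-1})^{-1}$, and similarly $t^{-\nu}at^{\nu}\propto t^{-\nu}a^{-1}t^{\nu}$ in the HNN case. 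Your route instead proves $(3)\Rightarrow(4)$: for two ends you invoke Lemma~\ref{L:chartwoends} and eliminate the $\z_2*\z_2$ quotient via the quasi-isometry $\pi$ and the palindrome $(ab)^{\mu}a$; for infinitely many ends you import a quasi-isometrically embedded $F_2$ via ping-pong on the Bass--Serre tree and run the $c^{\mu}bc^{-\mu}\propto c^{\mu}b^{-1}c^{-\mu}$ argument there. Both are valid, but the paper's version is lighter on prerequisites: it never needs quasi-convexity of a free subgroup (only the normal-form Fact already stated), and in the two-ended case it avoids the classification Lemma~\ref{L:chartwoends} for the direction $(3)\Rightarrow(1)$, reserving that lemma solely for $(1)\Rightarrow(4)$. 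Your approach, on the other hand, packages the infinitely-ended case uniformly (no separate amalgam/HNN computations) at the cost of citing a folklore ping-pong/undistortion fact that the paper does not develop.
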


\begin{proof}
$(1)\Rightarrow (2)$  is immediate from Lemma \ref{L:semigroup}.  

$(2)\Rightarrow (3)$ is trivial.

$(3)\Rightarrow (1)$:  $G$ cannot have one end, for then we have $g\propto g^{-1}$ for all $g\in G_{\inf}$.  Now suppose that $G$ has two ends, but is not equal to its own end stabilizer.  Then $H$ has index $2$ in $G$, say $G=H\sqcup xH$.  Let $h$ be in $H_{\inf}$ and set $g:=xh$.  Then, by Lemma \ref{L:invasymp}, we have $h\not\propto h^{-1}$, and since $x$ permutes the two ends of $G$, it follows that $g=xh\propto h^{-1}\propto h^{-1}x^{-1}=g^{-1}$, whence (3) fails.  It remains to eliminate the case that $G$ has infinitely many ends.  By Stalling's Theorem, we know that $G$ is either isomorphic to an amalgamated free product $A*_C B$ or an HNN extension $A*_C$, where $C$ is finite, $[A:C]\geq 3$, and $[B:C]\geq 2$.  We show that both of these situations contradict (3).  First consider the case of the amalgamated free product $G=A*_C B$.  Without loss of generality, we suppose that $C$ is a common subgroup of $A$ and $B$.  Fix $a\in A\setminus C$ and $b\in B\setminus C$.  Fix $\nu \in \n^*\setminus \n$.  Let $g:=\underbrace{abab\cdots a}_{\nu \text{ factors}}$.  By the reduced form theorem for amalgamated free products, we have that that the elements $gbg^{-1}$ and $gb^{-1}g^{-1}$ are both infinite and in the same infinite path component as $g$.  Hence, $gbg^{-1}\propto (gbg^{-1})^{-1}$, contradicting (3).  Now consider the case of the HNN extension $G=A*_\phi$, where $\phi:C_1\to C_2$ is an isomorphism between two subgroups of $A$.  Let $t$ be the stable letter of $G$.  Fix $a\in A\setminus C_1$ and $\nu \in \n^*\setminus \n$.   Then by Britton's Lemma, the elements $t^{-\nu} a t^\nu$ and $t^{-\nu} a^{-1} t^\nu$ are both infinite and in the same infinite path component as $t^{-\nu}$, yielding $t^{-\nu} a t^{\nu}\propto (t^{-\nu} a t^{\nu})^{-1}$, contradicting (3).

$(1)\Rightarrow (4)$ If $G$ were isomorphic to a semidirect product of a finite group by $\z_2*\z_2$, then $G$ has infinite elements of order $2$ (see the argument of Example \ref{L:free}), whence $G$ does not equal its own end stabilizer by Lemma \ref{L:semigroup}.  By Lemma \ref{L:chartwoends}, $G$ must be isomorphic to a semidirect product of a finite group by $\z$.

$(4)\Rightarrow (5)$:  Suppose $G$ is a semidirect product of the finite group $K$ by an infinite cyclic group $L$ with generator $l$.  Since $G$ is virtually $\z$, we know that $G$ has two ends.  Since conjugation by $l$ is an automorphism of $K$ and $K$ is finite, there must be $n$ such that $l^nkl^{-n}=k$ for all $k\in K$.  It follows that $l^n$ is central in $G$ (and has infinite order).

$(5)\Rightarrow (1)$:  Suppose that $G$ has two ends and has an infinite cyclic central subgroup $L$ generated by $l$.  Suppose, towards a contradiction, that $G$ is not equal to its own end stabilizer.  Choose $x\in G$ such that $x$ permutes the two ends of $G$.  Fix $\nu \in \n^*\setminus \n$.  Then $xl^\nu=l^\nu x \propto l^\nu$, contradicting the fact that $x$ permutes the ends of $G$.       
\end{proof}

As is well-known, semidirect products are sensitive to the order of the factors; the next lemma exemplifies this fact.

\begin{lemma}
If $G$ is a semidirect product of an infinite cyclic group $L$ by a finite group $K$, then $G$ is a finitely generated group with two ends which is \textbf{not} equal to its own end stabilizer unless $G$ is  the direct product of $L$ and $K$.
\end{lemma}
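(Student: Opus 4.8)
The plan is to understand the group $G$ as a semidirect product $G = L \rtimes K$, where $L = \langle l\rangle$ is infinite cyclic and $K$ is finite, with $K$ acting on $L$ by automorphisms. Since $\operatorname{Aut}(\z) = \{\pm 1\}$, each element $k \in K$ acts on $L$ either trivially (sending $l \mapsto l$) or by inversion (sending $l \mapsto l^{-1}$). This dichotomy is the crux of the whole argument. First I would observe that $G$ is virtually $\z$ (since $L$ has finite index), hence $G$ has two ends by the remark following Theorem \ref{T:numberofends}. This establishes the "two ends" part immediately.

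Next I would analyze the end stabilizer $H$. The key is to determine when an element $x \in G_{\inf}$ satisfies $x \propto x^{-1}$, since by Proposition \ref{L:multends} the condition "$G$ equals its own end stabilizer" is equivalent to having multiplicative ends, which in turn (via part (3) of that proposition) is equivalent to $g \not\propto g^{-1}$ for all $g \in G_{\inf}$. The plan is to compute, for an infinite power $l^\nu$ (with $\nu \in \n^* \setminus \n$) and a conjugate by some $k \in K$ acting by inversion, how $l^\nu$ relates under $\propto$ to its inverse. Writing $G = L \sqcup lK \sqcup \cdots$ as a disjoint union of finitely many cosets of $L$, every infinite element is a bounded distance from some $l^{\pm\nu}$, so the two ends of $G$ are represented by $l^\nu$ and $l^{-\nu}$. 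If $K$ acts trivially (i.e. $G = L \times K$ is the direct product), then conjugation fixes $L$ pointwise, and I would argue as in the proof of $(5) \Rightarrow (1)$ in Proposition \ref{L:multends}: since $l^\nu$ is then central, no element can swap the two ends, so $G = H$. If, on the other hand, some $k \in K$ acts by inversion, then $k l^\nu k^{-1} = l^{-\nu}$, and since $k$ is a fixed standard element, $k l^\nu$ is a finite distance from $l^{-\nu}$; this exhibits an element of $G$ conjugating the end of $l^\nu$ to the end of $l^{-\nu}$, showing $k$ permutes the two ends and hence $G \neq H$.

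The main step, then, is to show the equivalence "$G = L \times K$ $\iff$ $K$ acts trivially on $L$ $\iff$ $G$ equals its own end stabilizer." The forward-to-trivial-action direction is routine (in a direct product each $K$-conjugation fixes $L$), and the trivial-action-to-$G=H$ direction uses centrality of $l^\nu$ exactly as in $(5)\Rightarrow(1)$. For the converse, I would show that if the action is nontrivial, the inverting element $k \in K$ witnesses $l^\nu \propto l^{-\nu}$ after translation by a bounded amount; concretely, since $k$ is standard, left-translation by $k$ (an isometry of the Cayley graph) carries the infinite path component of $l^\nu$ to that of $k l^\nu \propto l^{-\nu}$, so by Proposition \ref{L:multends}(3) applied to $g = l^\nu$ combined with the coset structure we obtain $g \propto g^{-1}$ for a suitable infinite $g$, forcing $G \neq H$.

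The hard part will be handling the bookkeeping of cosets carefully: I must confirm that the only two infinite path components are those of $l^\nu$ and $l^{-\nu}$ and that translation by the standard finite element $k$ genuinely interchanges them rather than fixing each. This requires knowing that left multiplication by $k$ induces the end-permutation $g \mapsto kg$ on $\operatorname{IPC}(X)$ (from the morphism $G \to \operatorname{Homeo}(\ipc(X))$ constructed before Theorem \ref{T:numberofends}) and that $k l^\nu$ lies in $\mu$-proximity, in the $\propto$ sense, to $l^{-\nu}$ because $d(k l^\nu, l^{-\nu}) = |k l^\nu l^\nu| = |k|$ is finite. Once that distance computation is in hand, Lemma \ref{L:asympcay} finishes the argument, and the dichotomy on $\operatorname{Aut}(\z)$ cleanly separates the direct-product case from the genuinely nontrivial semidirect case.
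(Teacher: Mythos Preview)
Your approach is correct but takes a different route from the paper's. You argue geometrically: after noting that $G$ is virtually $\z$ (hence two-ended), you identify the two ends as $[l^\nu]$ and $[l^{-\nu}]$ and observe that an inverting $k \in K$ satisfies $kl^\nu = l^{-\nu}k$, so $d(kl^\nu, l^{-\nu}) = |k|$ is finite and left multiplication by $k$ sends $[l^\nu]$ to $[l^{-\nu}]$; hence $k \notin H$ and $G \neq H$. The paper instead gives a one-line algebraic computation: for $l \in L_{\inf}$ one has $(lk)(lk) = l(klk^{-1})k^2 = l\,l^{-1}k^2 = k^2 \in G$, so $G$ fails to have multiplicative ends, and Proposition~\ref{L:multends} (the implication $(2)\Rightarrow(1)$) gives $G \neq H$ immediately. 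The paper's route is shorter and does not need to verify that $l^\nu \not\propto l^{-\nu}$, a fact you use implicitly but which requires a word about why the two ends of the finite-index subgroup $L$ remain distinct in $G$.

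One point of caution: your third paragraph's appeal to condition (3) of Proposition~\ref{L:multends} ``applied to $g = l^\nu$'' is muddled, since $l^\nu \not\propto l^{-\nu}$. The ``suitable infinite $g$'' witnessing $g \propto g^{-1}$ would have to be something like $g = l^\nu k$, and checking that for this $g$ is essentially the paper's computation. Fortunately your second-paragraph argument (that $k$ swaps the ends, hence $k \notin H$) is already complete on its own and does not require the detour through condition (3). Also, the lemma only asserts $G \neq H$ when $G$ is not the direct product; your treatment of the direct-product case via $(5)\Rightarrow(1)$ is correct but not demanded by the statement.
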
 

\begin{proof}
If $G$ is not isomorphic to the direct product of $L$ and $K$, there must be $k\in K$ such that for every $l\in L$, $klk^{-1}=l^{-1}$.  Let $l\in \z_{\inf}$ be arbitrary.  Then $$(lk)\cdot (lk)=l(klk^{-1})k^2=k^2\in G,$$ whence $G$ does not have multiplicative ends, and hence, by Lemma \ref{L:multends}, $G$ is not equal to its own end stabilizer.
\end{proof}

Now we consider the situation when $G$ has infinitely many ends, whence the end stabilizer $H$ of $G$ is \emph{finite}.  Let $G/H$ denote the set of right cosets of $H$ in $G$.  In general, we have $(G/H)^*=G^*/H^*$.  Since $H$ is finite, we have $H^*=H$, so $(G/H)^*=G^*/H$.  Let us assume that $S=S^{-1}$ and let $\tilde{S}:=S\setminus H$.  Note that the image of $\tilde{S}$ under the natural map $G\to G/H$ is a generating set for $G/H$ not containing the trivial coset $H$.  Let $\tilde{X}:=\operatorname{Cay}(G/H,\tilde{S})$.  As before, we have that $(G/H)_{\f}=G/H$ and hence that $(G/H)_{\inf}=G_{\inf}/H$.

\begin{lemma}
If $G$ has infinitely many ends, then $G/H$ has trivial end stabilizer.
\end{lemma}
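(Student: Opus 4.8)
The plan is to exploit the fact that, because $H$ is finite, the quotient map $\pi\colon G\to G/H$ is a quasi-isometry from $X=\operatorname{Cay}(G,S)$ to $\tilde X=\operatorname{Cay}(G/H,\tilde S)$, and then to transport the relation $\propto$ between the two Cayley graphs via Lemma \ref{L:qinonst}. Once this is done, membership of a coset $\bar g$ in the end stabilizer of $G/H$ will translate directly into membership of a representative $g$ in the end stabilizer $H$ of $G$, forcing $\bar g$ to be the trivial coset.

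First I would verify that $\pi$ is a quasi-isometry, regarding it as a map of vertex sets (extended arbitrarily to a, not necessarily continuous, map of metric spaces). Since $\tilde S$ is the image of $S$, any $S^{\pm 1}$-path in $G$ projects to a $\tilde S^{\pm 1}$-path in $G/H$ of no greater length (steps by generators lying in $H$ become trivial), so $d_{G/H}(\pi(g),\pi(g'))\le d_G(g,g')$. Conversely, put $K:=\max_{h\in H}|h|$; two elements in the same $H$-coset differ by an element of $H$ and hence lie within $K$ of one another, so any $\tilde S^{\pm 1}$-path realizing $d_{G/H}(gH,g'H)$ lifts to an $S^{\pm 1}$-path from $g$ ending within $K$ of $g'$, giving $d_G(g,g')\le d_{G/H}(\pi(g),\pi(g'))+K$. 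Thus $\pi$ is a $(1,K)$-quasi-isometric embedding, and being surjective it is a quasi-isometry. In particular $\pi^{-1}\big((G/H)_{\inf}\big)=G_{\inf}$ (consistent with the already-noted equality $(G/H)_{\inf}=G_{\inf}/H$), and by Lemma \ref{L:qinonst}(2), for all $x,x'\in G_{\inf}$ we have $x\propto x'$ in $X$ if and only if $\pi(x)\propto\pi(x')$ in $\tilde X$.

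Now let $\tilde H$ denote the end stabilizer of $G/H$ and take any $\bar g\in\tilde H$, writing $\bar g=gH$. Given an arbitrary $x\in G_{\inf}$, we have $\pi(x)\in(G/H)_{\inf}$, while $gx\in G_{\inf}$ because $g$ is a standard element; moreover $\bar g\cdot\pi(x)=gxH=\pi(gx)$. Since $\bar g\in\tilde H$, this yields $\pi(gx)\propto\pi(x)$ in $\tilde X$, and hence $gx\propto x$ in $X$ by the equivalence established above. As $x\in G_{\inf}$ was arbitrary, $g$ lies in the end stabilizer $H$ of $G$, so $\bar g=gH=H$ is the trivial coset; therefore $\tilde H$ is trivial.

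The only genuinely technical point is the verification that $\pi$ is a quasi-isometry, so I expect the main (and only modest) obstacle to be packaging the lifting-of-paths estimate cleanly; everything else is a direct unwinding of definitions. Alternatively, one can bypass Lemma \ref{L:qinonst} and transfer $\propto$ directly through Lemma \ref{L:asympcay}: a hyperfinite $S^{\pm 1}$-sequence staying in $G_{\inf}$ projects to a hyperfinite $\tilde S^{\pm 1}$-sequence staying in $(G/H)_{\inf}$, and conversely lifts back up to the bounded ambiguity coming from the finite group $H$, which gives the same correspondence $gx\propto x \Leftrightarrow \pi(gx)\propto\pi(x)$ without invoking the quasi-isometry machinery.
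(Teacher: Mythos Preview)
Your proof is correct and takes a genuinely different route from the paper's. The paper argues directly via Lemma~\ref{L:asympcay}: given $Hg'\propto Hgg'$ in $\tilde X$, it picks a witnessing hyperfinite $\tilde S$-sequence $s_0,\ldots,s_\nu$, writes $gg'=hg's_0\cdots s_\nu$ with $h\in H$, and observes that $gg'\propto g's_0\cdots s_\nu\propto g'$ in $X$ (the first $\propto$ because $h$ is standard, the second because each $Hg's_0\cdots s_i\in(G/H)_{\inf}$ forces $g's_0\cdots s_i\in G_{\inf}$). This is essentially the ``alternative'' you sketch in your final paragraph. Your main argument instead packages the correspondence of $\propto$ through the quasi-isometry $\pi$ and Lemma~\ref{L:qinonst}, which is more conceptual: it explains structurally why end stabilizers must match (finite normal quotients induce quasi-isometries of Cayley graphs), at the modest cost of verifying the $(1,K)$-bound and invoking a slightly heavier lemma. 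One cosmetic point: you write cosets as $gH$ while the paper uses right cosets $Hg$, but since $H$ is a kernel and hence normal, this is harmless; your computation $d_G(hg',g')=|g'^{-1}h^{-1}g'|\le K$ does quietly use normality, which is worth making explicit.
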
  

\begin{proof}
Suppose $g\in G$ is such that $Hg$ fixes the ends of $\operatorname{IPC}(\tilde{X})$.  Fix $g'\in G_{\inf}$.  Then by hypothesis, we have $Hg'\propto Hgg'$, so there are $s_0,\ldots,s_\nu\in \tilde{S}$ such that $Hgg'=Hg's_0\cdots s_\nu$ and satisfying $Hg's_0\cdots s_i\in (G/H)_{\inf}$ for every $i\in \{0,\ldots,\nu\}$.  Write $gg'=hg's_0\cdots s_\nu$, where $h\in H$.  It now follows that  
$$gg'=hg's_0\cdots s_\nu \propto g's_0\cdots s_\nu \propto g'.$$  Since $g'\in G_{\inf}$ was arbitrary, we have that $g\in H$, completing the proof of the lemma.    
\end{proof}

\begin{cor}
If $G$ has infinitely many ends, then $\ee(G)$ is homeomorphic to $\ee(G')$, where $G'$ is a group with trivial end stabilizer.
\end{cor}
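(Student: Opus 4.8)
The plan is to show that the quotient map realizes a quasi-isometry between the Cayley graphs of $G$ and $G' := G/H$, and then to invoke the quasi-isometry invariance of the space of ends that we have already established.

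First I would record the structural facts. Since $H$ is the kernel of the morphism $G\to \operatorname{Homeo}(\ipc(X))$, it is normal in $G$; and since $G$ has infinitely many (in particular, more than two) ends, Theorem~\ref{T:twoends} forces $H$ to be finite. Hence $G' = G/H$ is a finitely generated infinite group, and by the preceding lemma it has trivial end stabilizer. Let $\tilde{X} = \operatorname{Cay}(G',\tilde{S})$ as in the discussion above; both $X$ and $\tilde{X}$ are proper geodesic spaces, so the Corollary to Lemma~\ref{L:qicont} (\cite{BH}, Proposition 8.29) will apply to any quasi-isometry between them.

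The key step is to show that the natural projection $\pi:G\to G'$, $\pi(g)=Hg$, is (the vertex part of) a quasi-isometry $f:X\to\tilde{X}$. Set $D := \max_{h\in H}|h|$, a finite constant. For $g,g'\in G$, writing a geodesic $g^{-1}g' = s_1\cdots s_n$ in $S^{\pm 1}$ and projecting shows $d_{\tilde{X}}(\pi g,\pi g')\le d_X(g,g')$, since deleting the letters that fall into $H$ yields a word in $\tilde{S}$ of length at most $n$ representing $Hg^{-1}g'$. Conversely, a geodesic $Hg^{-1}g' = H\tilde{s}_1\cdots\tilde{s}_m$ in $\tilde{S}$ lifts, using normality of $H$, to $g^{-1}g' = h\tilde{s}_1\cdots\tilde{s}_m$ for some $h\in H$, whence $d_X(g,g')\le d_{\tilde{X}}(\pi g,\pi g')+D$. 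Thus $\pi$ is a $(1,D)$-quasi-isometric embedding on vertices; extending $\pi$ to all of $X$ by sending each point to the projection of a vertex within distance $1$ of it (recall that quasi-isometries need not be continuous) gives a $(1,D+2)$-quasi-isometric embedding $f:X\to\tilde{X}$ whose image contains every vertex of $\tilde{X}$ and is therefore coarsely dense, so $f$ is a quasi-isometry.

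Finally, the Corollary to Lemma~\ref{L:qicont} produces a homeomorphism $f_e:\ee(X)\to\ee(\tilde{X})$, that is $\ee(G)\cong\ee(G')$, and $G'$ has trivial end stabilizer by the preceding lemma, completing the argument. The only genuine content lies in the quasi-isometry estimate of the third paragraph; because the finiteness of $H$ makes the additive constant $D$ an honest bound, I expect this step to be routine rather than a serious obstacle, with all the real work having been done in building the quasi-isometry machinery of Section~4 and in the preceding lemma.
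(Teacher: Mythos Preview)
Your proof is correct and follows exactly the same route as the paper's: take $G'=G/H$, observe that $H$ is finite and normal so that the quotient map is a quasi-isometry of Cayley graphs, and invoke the quasi-isometry invariance of ends together with the preceding lemma. The paper's proof simply asserts the quasi-isometry in one line, whereas you supply the explicit $(1,D)$-estimate; this is all fine and your details are accurate.
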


\begin{proof}
Since $H$ is a finite normal subgroup of $G$, $\operatorname{Cay}(G)$ and $\operatorname{Cay}(G/H)$ are quasi-isometric, whence $\ee(G)$ and $\ee(G/H)$ are homeomorphic.  Take $G'=G/H$.
\end{proof}

To summarize, if $G$ is a group with at least two ends, then $G$ has infinite end stabilizer if and only if $G$ has exactly two ends.  If $G$ has infinitely many ends, then we know that its end stabilizer must be finite, and then in this case, $G$ is quasi-isometric with a group with trivial end stabilizer.

\section{Relative Ends}

In this section, we still assume that $G$ is an infinite, finitely generated group with finite generating set $S$.  We further suppose that $K$ is a subgroup of $G$ of \emph{infinite index in }$G$.  We let $\operatorname{Cay}(G,K,S)$, the \textbf{relative Cayley graph of $G$ with respect to $K$ and $S$}, be the locally finite graph with $\V=G/K$, the set of \emph{right cosets} of $K$ in $G$, and such that $(Kg,Kg')\in \E$ if there is $s\in S^{\pm 1}$ such that $Kg'=Kgs$.  We let $X$ denote the metric space obtained from $\operatorname{Cay}(G,K,S)$.  As in the case of the ordinary Cayley graph, if $S'$ is also a finite generating set for $G$ and $X'$ is the metric space obtained from $\operatorname{Cay}(G,K,S')$, then $X$ and $X'$ are quasi-isometric, whence we can speak of $\ee(G,K)$ and $\ipc(G,K)$ as the spaces of ends and infinite path components of any relative Cayley graph of $G$ with respect to $K$.

 Since $G\cap K^*=K$, the natural map $$\iota:G/K\to (G/K)^*=G^*/K^*, \quad \iota(Kg)=K^*g,$$ is injective.  Note that for $g\in G^*$, $K^*g\in (G/K)_{\f}$ if and only if there are $s_1,\ldots,s_n\in S^{\pm 1}$ such that $K^*g=K^*s_1\cdots s_n$, that is $(G/K)_{\f}=\iota(G/K)$.  In other words, $K^*g\in (G/K)_{\f}$ if and only if there is $x\in G$ such that $gx\in K^*$.  This leads to the following definitions.

\begin{df}

\

\begin{enumerate}
\item $G_{\f, K}:=\{g\in G^* \ | \text{ there exists } x\in G \text{ such that }gx\in K^*\}$;
\item $G_{\inf,K}=G^*\setminus G_{\f,K}=\{g\in G^* \ | \text{ for all }x\in G \text{ we have }gx\notin K^*\}$.
\end{enumerate}
\end{df}

\noindent Note that $G\subseteq G_{\f,K}$ and $G_{\f,K}=G$ (whence $G_{\inf,K}=G_{\inf}$) if and only if $K$ is finite.  These definitions were made so that the identities $$(G/K)_{\f}=\{K^*g \ | \ g\in G_{\f,K}\}$$ and $$(G/K)_{\inf}=\{K^*g\ | \ g\in G_{\inf,K}\}$$ would hold tautologically.

\begin{lemma}

\

\begin{enumerate}
\item $K^*\cdot G_{\f,K}\subseteq G_{\f,K}$;
\item $K^*\cdot G_{\inf,K}\subseteq G_{\inf,K}$.
\end{enumerate}
\end{lemma}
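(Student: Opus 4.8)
The plan is to reduce both parts to a single structural fact: since $K$ is a subgroup of $G$, the transfer principle guarantees that $K^*$ is an internal subgroup of $G^*$, hence closed under the group operation and under taking inverses. Everything below uses only this closure, together with the defining property of $G_{\f,K}$ that membership is witnessed by a \emph{standard} element $x \in G$.

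For part (1), I would fix $g \in G_{\f,K}$ and a witness $x \in G$ with $gx \in K^*$, and let $k \in K^*$ be arbitrary. I claim the same standard witness $x$ shows $kg \in G_{\f,K}$: by associativity $(kg)x = k(gx)$, and this is a product of the two elements $k$ and $gx$ of $K^*$, hence lies in $K^*$. Thus $kg \in G_{\f,K}$, giving $K^* \cdot G_{\f,K} \subseteq G_{\f,K}$. The point worth flagging, though it is no real difficulty, is precisely that the witness $x$ need not be altered when left-multiplying by $k$, so the requirement that $x$ be standard is automatically preserved.

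For part (2), I would deduce the inclusion $K^* \cdot G_{\inf,K} \subseteq G_{\inf,K}$ from part (1) by a contrapositive argument. Suppose $k \in K^*$ and $g \in G_{\inf,K}$, but towards a contradiction $kg \in G_{\f,K}$. Since $K^*$ is a subgroup we have $k^{-1} \in K^*$, so part (1) applied to $k^{-1}$ and $kg$ yields $g = k^{-1}(kg) \in G_{\f,K}$, contradicting $g \in G_{\inf,K} = G^* \setminus G_{\f,K}$. Hence $kg \in G_{\inf,K}$, as desired.

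I do not anticipate any genuine obstacle here; the lemma is essentially a restatement of the fact that $K^*$ acts on $G^*$ by left multiplication preserving the partition of $G^*$ into $G_{\f,K}$ and $G_{\inf,K}$. The only step requiring a moment's attention is the invocation of transfer to know that $K^*$ is closed under the group operations, and the observation that the standard witness from the definition of $G_{\f,K}$ survives left-multiplication by an element of $K^*$.
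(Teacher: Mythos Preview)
Your proof is correct and is essentially identical to the paper's: for (1) the paper uses the same witness $x$ and the same associativity/closure argument, and for (2) the paper simply writes ``(2) follows easily from (1),'' which your contrapositive argument spells out explicitly.
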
     

\begin{proof}
For (1), suppose that $g\in G_{\f,K}$, so there is $x\in G$ such that $gx\in K^*$.  But then if $h\in K^*$, we have $(hg)x=h(gx)\in K^*$, whence $hg\in G_{\f,K}$.  (2) follows easily from (1).  
\end{proof}

\noindent The following is immediate from Lemma \ref{L:asympgraph}.

\begin{lemma}
Suppose that $K^*g, K^*g'\in (G/K)_{\inf}$ (so $g,g'\in G_{\inf,K}$).  Then $K^*g\propto K^*g'$ if and only if there is a hyperfinite sequence $s_0,\cdots,s_\nu\in S^{\pm 1}$ such that $K^*gs_0\cdots s_\nu=K^*g'$ and $K^*gs_0\cdots s_i\in (G/K)_{\inf}$ for all $i\in \{1,\ldots,\nu\}$.   
\end{lemma}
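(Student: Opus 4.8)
The plan is to obtain this lemma as a direct specialization of Lemma~\ref{L:asympgraph} to the relative Cayley graph, exactly as Lemma~\ref{L:asympcay} specializes it in the absolute case. Recall that $X=\operatorname{Cay}(G,K,S)$ is the locally finite graph with vertex set $\V=G/K$ and that its infinite vertices are precisely $\V_{\inf}=(G/K)_{\inf}=\{K^*g \mid g\in G_{\inf,K}\}$. Since by hypothesis $K^*g,K^*g'\in(G/K)_{\inf}=\V_{\inf}$, Lemma~\ref{L:asympgraph} applies verbatim and tells us that $K^*g\propto K^*g'$ if and only if there is a hyperfinite sequence of vertices $a_0,\ldots,a_\nu$ in $\V_{\inf}$ with $a_0=K^*g$, $a_\nu=K^*g'$, and $(a_i,a_{i+1})\in\E^*$ for every $i<\nu$. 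All that remains is to rewrite the edge condition $(a_i,a_{i+1})\in\E^*$ in group-theoretic terms.

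To do this I would unwind the edge relation by transfer. Because $S$ is finite we have $(S^{\pm 1})^*=S^{\pm 1}$, so by transfer of the definition of $\E$ a pair $(a_i,a_{i+1})$ lies in $\E^*$ precisely when there is some $s\in S^{\pm 1}$ with $a_{i+1}=a_i s$, where the action is $K^*h\cdot s=K^*(hs)$. To promote this edge-by-edge existence statement into a single hyperfinite sequence of generators, I would fix once and for all a linear ordering of the finite set $S^{\pm 1}$ and, by transfer of the associated definable choice, internally select for each $i<\nu$ the least $s_i\in S^{\pm 1}$ with $a_{i+1}=a_i s_i$. A routine internal induction then yields $a_i=K^*g\,s_0\cdots s_{i-1}$ for all $i\le\nu$; in particular $K^*g\,s_0\cdots s_{\nu-1}=a_\nu=K^*g'$, and the requirement $a_i\in\V_{\inf}$ becomes $K^*g\,s_0\cdots s_{i-1}\in(G/K)_{\inf}$. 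Up to a harmless relabeling of the hyperfinite length parameter (matching the indexing $s_0,\ldots,s_\nu$ of the statement), this is exactly the asserted hyperfinite sequence of generators, which gives the forward direction. The converse is immediate: from any such sequence of generators one reads off the hyperfinite walk $K^*g=a_0,a_1,\ldots,a_\nu=K^*g'$ through $\V_{\inf}$ whose consecutive vertices are $\E^*$-adjacent, so Lemma~\ref{L:asympgraph} again yields $K^*g\propto K^*g'$.

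Since both directions reduce so cleanly to Lemma~\ref{L:asympgraph}, there is no genuine obstacle here, which is why the statement can be asserted as immediate. The only point that calls for the slightest care is the internal extraction of the generator sequence $(s_i)$ from the hyperfinite sequence of edges: this must be carried out by transferring a definable selection over the finite set $S^{\pm 1}$, rather than by invoking choice externally on a hyperfinite index set. This is precisely the passage that turns Lemma~\ref{L:asympgraph} into Lemma~\ref{L:asympcay} in the absolute case, now applied with $G/K$ in place of $G$.
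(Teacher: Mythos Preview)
Your proposal is correct and matches the paper's approach exactly: the paper simply declares the lemma ``immediate from Lemma~\ref{L:asympgraph},'' and what you have written is a faithful unpacking of that remark, mirroring precisely how Lemma~\ref{L:asympcay} specializes Lemma~\ref{L:asympgraph} in the absolute case. Your care in extracting the hyperfinite generator sequence internally (via a definable selection on the finite set $S^{\pm 1}$) is the one nontrivial point, and you handle it correctly.
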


We now formulate the relation $\propto$ for $(G/K)^*$ in terms of a related notion in $G^*$.

\begin{df}
For $g,g'\in G_{\inf,K}$, define $g\propto_K g'$ if there exists a hyperfinite sequence $s_0,\cdots,s_\nu \in S^{\pm 1}$ such that $gs_0\cdots s_\nu=g'$ and $gs_0\cdots s_i\in G_{\inf,K}$ for all $i\in\{1,\ldots,\nu\}$.
\end{df}

Note that the relation $\propto_K$ is an equivalence relation, whence we can speak of the \textbf{$K$-infinite path components of $G^*$}.  Note that $g\propto_Kg'$ implies that $g\propto g'$, and if $K$ is finite, then the notion $\propto_K$ is just the notion $\propto$.  The definitions were made so that the following lemma would be a tautology.

\begin{lemma}
For $g,g'\in G_{\inf,K}$, we have $K^*g\propto K^*g'$ if and only if there exists $h\in K^*$ such that $g\propto_K hg'$.
\end{lemma}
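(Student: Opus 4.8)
The plan is to treat the statement exactly as the author advertises: unwind the definitions and invoke the preceding lemma, which already characterizes $K^*g\propto K^*g'$ by the existence of a hyperfinite sequence in $S^{\pm 1}$ that stays infinite at the coset level. The whole argument rests on the defining identity $(G/K)_{\inf}=\{K^*g\mid g\in G_{\inf,K}\}$ together with the $K^*$-invariance of $G_{\inf,K}$ proved earlier; together these guarantee that for any $g\in G^*$ one has $K^*g\in (G/K)_{\inf}$ if and only if $g\in G_{\inf,K}$, a condition depending only on the coset $K^*g$. I would record this equivalence at the outset, since it is what lets me pass membership conditions freely between $G^*$ and $G^*/K^*$.

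For the forward direction I would assume $K^*g\propto K^*g'$ and apply the preceding lemma to obtain a hyperfinite sequence $s_0,\ldots,s_\nu\in S^{\pm 1}$ with $K^*gs_0\cdots s_\nu=K^*g'$ and $K^*gs_0\cdots s_i\in (G/K)_{\inf}$ for all $i\in\{1,\ldots,\nu\}$. The coset equality at the final step gives $h:=gs_0\cdots s_\nu(g')^{-1}\in K^*$, so that $gs_0\cdots s_\nu=hg'$. Translating each membership condition via the equivalence above yields $gs_0\cdots s_i\in G_{\inf,K}$ for every $i$, which is precisely the requirement defining $g\propto_K hg'$.

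For the backward direction I would assume $g\propto_K hg'$ for some $h\in K^*$ and fix a witnessing sequence $s_0,\ldots,s_\nu\in S^{\pm 1}$ with $gs_0\cdots s_\nu=hg'$ and $gs_0\cdots s_i\in G_{\inf,K}$ for all $i$. Passing to cosets and using $K^*h=K^*$, I get $K^*gs_0\cdots s_\nu=K^*g'$ together with $K^*gs_0\cdots s_i\in (G/K)_{\inf}$, so the preceding lemma delivers $K^*g\propto K^*g'$.

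As the author remarks, there is essentially no obstacle here; the definitions were arranged so that the statement reduces to bookkeeping. The only point deserving a moment's care is confirming that $K^*gs_0\cdots s_i\in (G/K)_{\inf}$ is genuinely \emph{equivalent} to $gs_0\cdots s_i\in G_{\inf,K}$, and not merely implied in one direction; this equivalence is exactly the content of the defining identity for $(G/K)_{\inf}$ combined with the fact that $G_{\inf,K}$ is a union of left $K^*$-cosets.
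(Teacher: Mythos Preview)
Your proof is correct and matches the paper's approach exactly: the paper does not even give a proof, remarking instead that ``the definitions were made so that the following lemma would be a tautology,'' and what you have written is precisely the routine unwinding of that tautology via the preceding lemma and the defining identity $(G/K)_{\inf}=\{K^*g\mid g\in G_{\inf,K}\}$.
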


Let $N_G(K):=\{g\in G \ | \ ghg^{-1}\in K \text{ for all }h\in K\}$ be the normalizer of $K$ in $G$.  Notice that $N_G(K)/K$ acts on $G/K$ by left multiplication and this action preserves the relation $\E$, whence we can extend this action to an isometry of $X$.  By Lemma \ref{L:qicont}, we obtain a group homomorphism $$Kg\mapsto ([K^*x]\mapsto [K^*gx]):N_G(K)/K\to \operatorname{Homeo}(\ipc(X)).$$ Let $L$ be the normal subgroup of $N_G(K)$ such that $L/K$ is the kernel of the above morphism, so for $l\in L$ and $K^*x\in (G/K)_{\inf}$, we have $K^*lx\propto K^*x$.

For $x\in G$, let $|Kx|:=d(K,Kx)$ so $K^*x\in (G/K)_{\f}$ if and only if $|K^*x|\in \n$.

\begin{lemma}
Let $W\subseteq (G/K)_{\inf}$ be internal.  Then there is $\nu\in \n^*\setminus \n$ such that $K^*lx\in (G/K)_{\inf}$ and $K^*lx\propto K^*x$ for all $K^*x\in W$ and all $K^*l\in (L/K)^*$ with $|K^*l|\leq \nu$.
\end{lemma}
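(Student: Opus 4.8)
The plan is to adapt, essentially verbatim, the proof of Lemma~\ref{L:Hstar} to the relative setting, with $L/K$ playing the role of the end stabilizer $H$ and $(G/K)_{\inf}$ playing the role of $G_{\inf}$. The two ingredients driving the argument are: first, the defining property of $L$, namely that for each \emph{standard} coset $Kl \in L/K$ and every $K^*x \in (G/K)_{\inf}$ one has $K^*lx \propto K^*x$; and second, the fact that $\propto$ is the union of the internal relations $\propto_\eta$, so that $K^*a \propto K^*b$ holds precisely when $K^*a \propto_\eta K^*b$ for some infinite $\eta$. The obstruction to a one-line overflow argument is the same as in Lemma~\ref{L:Hstar}: the relation $\propto$ is \emph{external}, so one cannot quantify over it inside an internal set. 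I would circumvent this exactly as there, by first using saturation to extract a single infinite radius $\gamma$ that works \emph{uniformly} over all standard cosets, and only then passing to overflow.

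First I would fix, for each $n \in \n$, the internal set $A_n := \{\eta \in \n^* \ | \ \eta > n\}$, and for each standard coset $Kl \in L/K$ the internal set
$$B_{Kl} := \{\eta \in \n^* \ | \ K^*lx \propto_\eta K^*x \text{ for all } K^*x \in W\}.$$
The key observation is that $\n \subseteq B_{Kl}$ for every $Kl \in L/K$: since $Kl$ lies in the kernel $L/K$, we have $K^*lx \propto K^*x$ for every $K^*x \in W$, so for each such $K^*x$ there is an infinite $\eta_x$ with $K^*lx \propto_{\eta_x} K^*x$, and hence $K^*lx \propto_n K^*x$ for every finite $n$ (a path avoiding $B(K,\eta_x)$ a fortiori avoids the smaller ball $B(K,n)$). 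Thus the family $\{A_n \ | \ n \in \n\} \cup \{B_{Kl} \ | \ Kl \in L/K\}$ consists of internal sets with the finite intersection property (any large standard natural lies in all of them), and $\aleph_1$-saturation (note $L/K$ is countable, as $G$ is) yields $\gamma \in \bigcap_n A_n \cap \bigcap_{Kl} B_{Kl}$. This $\gamma$ is infinite and satisfies $K^*lx \propto_\gamma K^*x$ for every standard $Kl \in L/K$ and every $K^*x \in W$.

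Next I would run the overflow step on the internal set
$$C := \{\eta \in \n^* \ | \ (\forall K^*l \in (L/K)^*)(\forall K^*x \in W)\,(|K^*l| \leq \eta \to K^*lx \propto_\gamma K^*x)\}.$$
To see that $\n \subseteq C$, fix $n \in \n$ and $K^*l \in (L/K)^*$ with $|K^*l| \leq n$; by local finiteness of the relative Cayley graph (transferred), the ball $B(K,n)$ contains only finitely many vertices, all standard, so $K^*l$ is in fact a standard element of $L/K$, and $K^*lx \propto_\gamma K^*x$ then holds for all $K^*x \in W$ by the choice of $\gamma$. Hence $\n \subseteq C$, and overflow produces $\nu \in \n^* \setminus \n$ with $\nu \in C$. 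For any $K^*l \in (L/K)^*$ with $|K^*l| \leq \nu$ and any $K^*x \in W$ we then have $K^*lx \propto_\gamma K^*x$; since $\gamma$ is infinite this gives $K^*lx \propto K^*x$, and because the relation $\propto_\gamma$ forces its endpoint $K^*lx$ to lie outside $B(K,\gamma)$ we also get $|K^*lx| > \gamma$, i.e. $K^*lx \in (G/K)_{\inf}$. This $\nu$ is as desired.

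The one point requiring care, and the only real departure from the bookkeeping of Lemma~\ref{L:Hstar}, is the verification that elements of $(L/K)^*$ of finite norm are automatically standard; this is what lets the saturation step (carried out over standard cosets) feed correctly into the overflow step (which quantifies internally over all of $(L/K)^*$). This is precisely where the local finiteness of $\operatorname{Cay}(G,K,S)$ enters.
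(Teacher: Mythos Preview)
Your proposal is correct and follows exactly the approach the paper intends: the paper's own proof is simply the one-line ``Exactly like the proof of Lemma~\ref{L:Hstar},'' and you have carried out precisely that adaptation, with $L/K$ in place of $H$ and $(G/K)_{\inf}$ in place of $G_{\inf}$. Your added explanations (why $\n\subseteq B_{Kl}$, and the local-finiteness argument that forces $K^*l$ of finite norm to be standard so that $\n\subseteq C$) fill in details the paper leaves implicit but do not alter the route.
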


\begin{proof}
Exactly like the proof of Lemma \ref{L:Hstar}.
\end{proof}

\begin{thm}[see \cite{Geog}, Theorem 13.5.21]
If $N_G(K)/K$ is infinite and $\ee(G,K)$ is finite, then $|\ee(G,K)|\leq 2$.
\end{thm}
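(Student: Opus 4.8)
The plan is to run the short proof of Theorem~\ref{T:numberofends} in the relative Cayley graph $X=\operatorname{Cay}(G,K,S)$, with the relative end stabilizer $L/K$ in the role of the end stabilizer $H$ and the lemma immediately preceding this theorem in the role of Lemma~\ref{L:Hstar}. The first step is to observe that $L/K$ is infinite: since $\ee(G,K)$, and hence $\ipc(X)$, is finite, the morphism $N_G(K)/K\to\operatorname{Homeo}(\ipc(X))$ has image in the finite group $\operatorname{Homeo}(\ipc(X))$, so its kernel $L/K$ has finite index in $N_G(K)/K$; as $N_G(K)/K$ is infinite by hypothesis, $L/K$ is infinite as well.

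Now suppose, toward a contradiction, that $\ee(G,K)$ has at least three elements. The goal is to produce cosets $K^*x_1,K^*x_2\in(G/K)_{\inf}$ lying in two distinct relative ends together with an element $K^*l\in(L/K)_{\inf}$ lying in a third relative end and satisfying $|K^*l|\leq\nu$, where $\nu$ is furnished by the preceding lemma applied to $W=\{K^*x_1,K^*x_2\}$; that lemma then guarantees the stabilizer relations $K^*lx_i\propto K^*x_i$. With these in hand the endgame of Theorem~\ref{T:numberofends} applies: writing each $K^*x_i$ along a relative geodesic from the base coset (using the relative form of Lemma~\ref{L:asympcay}) and observing that $K^*l\not\propto K^*x_i$ while $K^*lx_i\propto K^*x_i$, the relative geodesic running from $K^*l$ to $K^*lx_i$ must leave $(G/K)_{\inf}$ at some infinite hyperfinite index, reaching a \emph{finite} coset. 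The two finite cosets so obtained from $x_1$ and $x_2$ are a bounded distance apart, and applying the isometry given by left translation by $l^{-1}$ transports this back to $K^*x_1\propto K^*x_2$, the desired contradiction. It is essential here that left translation by $l$ be a coset-preserving isometry of $X$, which is legitimate precisely because $l\in N_G(K)$; this is the reason the hypothesis is phrased in terms of $N_G(K)$.

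The one genuine difficulty is coordinating the bound $|K^*l|\leq\nu$ with the requirement that $K^*l$ occupy a \emph{third} end: unlike $H\leq G$ in the absolute case, $L/K$ is only coarsely dense in $N_G(K)/K$, not in all of $G/K$, so one cannot simply approximate a geodesic ray into a prescribed third end by elements of $L/K$. I would circumvent this by choosing the third end \emph{first}. For each infinite $\nu$ let $S_\nu$ be the set of relative ends that contain some $K^*l\in(L/K)_{\inf}$ with $|K^*l|\leq\nu$; since $L/K$ is infinite it has elements of arbitrarily large finite norm, so overflow makes each $S_\nu$ nonempty, and clearly $S_{\nu'}\subseteq S_\nu$ whenever $\nu'\leq\nu$. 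As $\ee(G,K)$ is finite this decreasing chain stabilizes, so $F:=\bigcap_\nu S_\nu$ is nonempty. Fixing $e_3\in F$, I then choose $K^*x_1,K^*x_2$ in two ends distinct from $e_3$ (possible since there are at least three ends), obtain $\nu$ from the preceding lemma, and use $e_3\in F$ to locate the required $K^*l\in(L/K)_{\inf}$ of norm at most $\nu$ in the end $e_3$.

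I expect the main obstacle to be exactly this norm-coordination step, namely guaranteeing a third-end representative of $L/K$ at the scale $\nu$ dictated by $x_1,x_2$; the stabilization argument producing $F\neq\emptyset$ is the substitute for the finite-index covering property used in the absolute proof. Everything else is a faithful transcription of the argument for Theorem~\ref{T:numberofends} into the relative setting, with the relative form of Lemma~\ref{L:asympcay} and the preceding lemma supplying the two combinatorial inputs.
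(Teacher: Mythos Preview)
Your proposal is correct and follows essentially the same route as the paper's proof: pick two infinite cosets in distinct ends, use the preceding lemma to get a scale $\nu$, find an infinite $K^*l\in(L/K)^*$ of norm at most $\nu$ in a third end, run the relative geodesics from $K^*l$ toward $K^*lx_i$ until they hit $(G/K)_{\f}$, and then use $l\in N_G(K)^*$ to translate back and conclude $K^*x_1\propto K^*x_2$. Your stabilization argument producing $F$ is sound but more elaborate than needed; the paper simply asserts the existence of the required $K^*l$, and the cleanest way to justify it is to apply the preceding lemma once to an internal $W$ containing a representative of \emph{each} of the finitely many ends, then take any $K^*l\in(L/K)_{\inf}$ with $|K^*l|\leq\nu$ (which exists by overflow) and let $x_1,x_2$ be two members of $W$ not in the end of $K^*l$.
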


\begin{proof}
Suppose, towards a contradiction, that $3\leq |\ee(G,K)|<\infty$.  Choose $K^*x_1, K^*x_2\in (G/K)_{\inf}$ such that $K^*x_1\not\propto K^*x_2$.  Since $L/K$ has finite index in $N_G(K)/K$, we must have that $L/K$ is infinite.  Choose $\nu\in \n^*\setminus \n$ such that $K^*lx_i\propto K^*x_i$ for $i=1,2$ and all $K^*l\in (L/K)^*$ with $|K^*l|\leq \nu$.  Choose $K^*l\in (L/K)^*\cap (G/K)_{\inf}$ such that $|K^*l|\leq \nu$ and such that $K^*l\not\propto K^*x_1$ and $K^*l\not \propto K^*x_2$.  Write $K^*x_1=K^*s_0\cdots s_\eta$, $K^*x_2=K^*t_0\cdots t_\zeta$, where $\eta,\zeta\in \n^*\setminus \n$, each $s_i,t_j\in S^{\pm 1}$, and such that $|K^*s_0\cdots s_i|=i+1$ and $|K^*t_0\cdots t_j|=j+1$ for all $i\in \{1,\ldots,\eta\}$ and all $j\in \{1,\ldots,\zeta\}$.  Since $K^*lx_1\propto K^*x_1\not\propto K^*l$, we must have $K^*ls_0\cdots s_i\in (G/K)_{\f}$ for some $i<\eta$.  Similarly, $K^*lt_0\cdots t_j\in (G/K)_{\f}$ for some $j<\zeta$.  We now must have $g\in G$ such that $ls_0\cdots s_ig^{-1}t_j^{-1}\cdots t_0^{-1}l^{-1}\in K^*$.  Since $l\in N_G(K)^*$, we have $K^*s_0\cdots s_i=K^*t_0\cdots t_jg$.  Since $l\in (L/K)_{\inf}$, we must have $i,j\in\n^*\setminus\n$, whence we have $$K^*x_1\propto K^*s_0\cdots s_i\propto K^*t_0\cdots t_jg\propto K^*t_0\cdots t_j \propto K^*x_2,$$ which is a contradiction.
\end{proof}

\noindent Much less is known about spaces of relative ends than about ordinary end spaces.  Perhaps nonstandard reasoning will be useful in further studying relative end spaces.

\end{document}